\newtheorem{thm}{Theorem}[section]
\newtheorem{theorem}[thm]{Theorem}
\newtheorem{proposition}[thm]{Proposition}
\newtheorem{cor}[thm]{Corollary}
\newtheorem{prop}[thm]{Proposition}
\newtheorem{lma}[thm]{Lemma}
\theoremstyle{definition}
\newtheorem{definition}[thm]{Definition}
\newtheorem{dfn}[thm]{Definition}
\newtheorem{notation}[thm]{Notation}
\newtheorem{rmk}[thm]{Remark}
\providecommand{\customgenericname}{}
\newcommand{\newcustomtheorem}[2]{%
  \newenvironment{#1}[1]
  {%
   \renewcommand\customgenericname{#2}%
   \renewcommand\theinnercustomgeneric{##1}%
   \innercustomgeneric
  }
  {\endinnercustomgeneric}
}
\newcommand{\bL}{\mathbf{L}}
\newcommand{\bG}{\mathbf{G}}
\newcommand{\oGpd}{\mathbf{oGpd}}
\newcommand{\lcCat}{\mathbf{lcCat}}
\newcommand{\id}{\mathrm{Id}}
\newcommand{\calA}{\mathcal{A}}
\newcommand{\calB}{\mathcal{B}}
\newcommand{\calC}{\mathcal{C}}
\newcommand{\calD}{\mathcal{D}}
\newcommand{\calG}{\mathcal{G}}
\newcommand{\calH}{\mathcal{H}}
\newcommand{\calI}{\mathcal{I}}
\newcommand{\calJ}{\mathcal{J}}
\newcommand{\calK}{\mathcal{K}}
\newcommand{\sh}{\mathrm{Sh}}
\begin{document}

\date{}
\title{A Double Categorical View on Representations of Etendues}
\author{
Darien DeWolf
\thanks{
  Department of Mathematics and Statistics,
  St. Francis Xavier University,
  2323 Notre Dame Ave,
  Antigonish, NS B2G 2W5,
  CANADA,
  ddewolf@stfx.ca}\,
and
Dorette Pronk
\thanks{
  Department of Mathematics and Statistics,
  Dalhousie University,
  6316 Coburg Road,
  Halifax, NS B3H 4R2,
  CANADA,
  Dorette.Pronk@Dal.Ca}}
\maketitle
\begin{abstract}
In this paper we introduce a description of ordered groupoids as a particular type of double categories. This enables us to turn Lawson's correspondence between ordered groupoids and left-cancellative categories into a biequivalence.
We use this to identify which ordered functors are maps of sites in the sense that they give rise to geometric morphisms between the induced sheaf categories,
and establish a Comparison Lemma for maps between Ehresmann sites.
\end{abstract}

\section{Introduction}
\label{sec:introduction}
As introduced in SGA4 \cite{sga4}, a topological \'etendue
\(\mathcal{E}\) is a topos which is locally a topological space:
there is some object $S\in\mathcal{E}$ together with a unique epimorphism
\(\xymatrix@1{S\ar@{->>}[r] & 1}\) such that \(\mathcal{E}/S\)
is equivalent to the topos of sheaves on a topological space.
By common convention (see, e.g., \cite{johnstone2002}),
we consider the more general localic \'etendues,
hereafter simply called \'etendues,
in which locales are used in lieu of topological spaces.

Rosenthal \cite{rosenthal1981} showed that the category of sheaves on a
left-cancellative site is an \'etendue
and, conversely,
Kock and Moerdijk \cite{kock-moerdijk-1991} showed that any \'etendue
is equivalent to the topos of sheaves on a left-cancellative site.
This presentation of general \'etendues has motivated the
subsequent work eventually leading to this paper.

Left-cancellative categories arise naturally in the study of cohomology
generalized from the context of groups to the context of inverse semigroups:
the cohomology of an inverse semigroup \cite{lausch1975}
is the same as the cohomology of a certain left-cancellative category
\cite{leech1987,lognathan1981}.
In particular,
the relationship between these cohomologies relies on a correspondence
between
certain actions of an inverse semigroup and
the actions on its associated left-cancellative category.
The study of inverse semigroups can also be done via ordered groupoids
as per the celebrated Ehresmann-Schein-Nambooripad Theorem
\cite{ehresmann:1960a,nambooripad2,nambooripad3,schein1}:
the category of inverse semigroups (and semigroup homomorphisms)
is equivalent to
the category of inductive groupoids (and inductive functors).
The Ehresmann-Schein-Nambooripad Theorem
has been nicely presented with its applications to
inverse semigroup theorem in Lawson's book \cite{lawson1998}.
and has since been extended to various natural contexts
\cite{cockett2019,dewolf-2018-a,gould2009,Hollings:2010iq,wang2019}.
Motivated by ordered groupoids being special types of inductive groupoids
and by the role of inverse semigroups acting on presheaves
in inverse semigroup theory \cite{meakin1998},
Lawson and Steinberg \cite{lawson2004}
engaged in this study of generalized group cohomology using inverse
semigroups in the more general context of ordered groupoids.

Lawson and Steinberg were successful in their investigation in that they
gave a first link between the topos-theoretic view
coming from sheaves on left-cancellative sites
(coming again from the relationship between cohomologies)
and the ordered-groupoid-theoretic view coming from the appropriate
sheaves on what they call Ehresmann sites;
Ehresmann sites are ordered groupoids equipped with
what they call an Ehresmann topology,
families of order ideals reminiscent of Grothendieck topologies.
They give a notion of sheaves on Ehresmann sites and prove:
\begin{enumerate}
\itemsep=0cm
\item
Each site with monic maps can be constructed from some Ehresmann site.
\item
Each \'etendue is equivalent to the category of sheaves on some Ehresmann site.
\end{enumerate}

To accomplish this,
Lawson and Steinberg define a pair of functors
\(\bL \colon \oGpd \rightarrow \lcCat\) and
\(\bG \colon \lcCat \rightarrow \oGpd\)
between the category of ordered groupoids (with ordered functors)
and the category of left-cancellative categories (with functors).
They then show that there is a natural transformation
\(\eta \colon \id \Rightarrow \bL\bG\)
with the property that for each left-cancellative category \(\calC\),
the component
\(\eta_\calC\colon \calC \rightarrow \bL\bG(\calC)\)
is a weak equivalence of categories.
Building off of this equivalence,
Lawson and Steinberg establish a one-to-one correspondence between
covering sieves of a left-cancellative site
\((\calC, J)\)
and the covering sieves in the corresponding left-cancellative site
\((\bL\bG(\calC), J_{T_J})\)
such that
the category of sheaves on \((\calC, J)\)
is equivalent to the category of sheaves on
\((\bL\bG(\calC), J_{T_J})\).

The purpose of Sections
\ref{sec:ordered-groupoids-as-double-categories} --
\ref{sec:the-equivalence-of-2-categories} of this paper
is primarily to strengthen Lawson and Steinberg's
result by answering the natural question
``Is there a corresponding natural transformation
\(\kappa\colon \bG\bL \Rightarrow \id \)
whose components are equivalences?''
Lawson and Steinberg provide a notion of such a natural transformation.
However, to recognize its components as weak equivalences,
one needs to view ordered groupoids as a kind of double category.
Double categories, as first introduced by Ehresmann \cite{Ehresmann1963},
have emerged as a convenient and powerful way to organize and study the
interaction between two different types of morphism on the same objects.
Given that partial orders can be thought of as categories,
we can re-define ordered groupoids as a special type of double category.
Our thinking of ordered groupoids as double categories allows us to tap
into the double-categorical toolbox;
in particular, thinking of ordered groupoids as double categories immediately
informs the correct notion of weak equivalence needed to answer our motivating
question in the positive:
once having defined ordered groupoids as double categories,
we are able to prove that the components of the natural transformation
\(\kappa\colon\id\Rightarrow\bG\bL\) given by Lawson and Steinberg
are weak equivalences in the sense of \cite{bunge1979}.

Pushing this further, we would like to say that we can establish an equivalence
of categories
\(\lcCat\simeq\oGpd\).
However, since the components of the natural transformations
\(\eta\) and \(\kappa\) are only (weak) equivalences, rather than isomorphisms,
we will need to consider \(\oGpd\) and \(\lcCat\) as 2-categories to do this.
We denote these 2-categories by
\textbf{\textit{oGpd}} and \textbf{\textit{lcCat}}.
The 2-structure of \textbf{\textit{lcCat}} is inherited from \(\mathbf{Cat}\):
the 2-cells are natural transformations.
To describe \textbf{\textit{oGpd}} as a 2-category requires more work in
choosing the correct notion of 2-cells.
We will call our choice of 2-cells \(\Lambda\)-transformations.
The existence of \(\Lambda\)-transformations depends on the fibration
(restriction) property of ordered groupoids giving the hom double category
\(\textbf{DblCat}(\mathcal{G}, \mathcal{H})\)
itself the structure of an ordered groupoid.
This way we obtain a 2-adjunction,

\begin{customthm}{\ref{2-adjunction}}
The 2-functors $\mathbf{L}\colon \mbox{\textbf{\textit{oGpd}}}\to\mbox{\textbf{\textit{lcCat}}}$ and
$\mathbf{G}\colon\mbox{\textbf{\textit{lcCat}}}\to\mbox{\textbf{\textit{oGpd}}}$ define a 2-adjunction,
$$\mbox{\textbf{\textit{oGpd}}}\simeq \mbox{\textbf{\textit{lcCat}}}.$$
\end{customthm}

To obtain a biequivalence the components of $\eta$ and $\kappa$ need to have weak inverses. In general this is not the case for $\kappa$.
However, we note that the ordered groupoids in the image of the functor $\mathbf{G}$ have the property that for each object $X$ there is a maximal object $\hat{X}$ such that  $X\le \hat{X}$. Restricting the 2-adjunction above to ordered groupoids with this property yields the desired biequivalence,

\begin{customcor}{\ref{2-equivalence}}
The 2-functors $\mathbf{L}\colon \mbox{\textbf{\textit{oGpd}}}_{\mbox{\scriptsize max}}\to\mbox{\textbf{\textit{lcCat}}}$ and
$\mathbf{G}\colon\mbox{\textbf{\textit{lcCat}}}\to\mbox{\textbf{\textit{oGpd}}}_{\mbox{\scriptsize max}}$ define a 2-adjoint biequivalence,
$$\mbox{\textbf{\textit{oGpd}}}_{\mbox{\scriptsize max}}\simeq \mbox{\textbf{\textit{lcCat}}}.$$
\end{customcor}

Section \ref{sec:applications} of this paper contains applications to the
study of sheaves on Ehresmann sites and further extends the work of Lawson and
Steinberg \cite{lawson2004} in two significant ways:

\begin{enumerate}
  \item
  Lawson and Steinberg show that there is an isomorphism of categories
  \begin{equation*}
  \mbox{\bf PreSh}(\calG)\cong\mbox{\bf PreSh}({\mathbf L}(\calG)).
  \end{equation*}
  They also show that this isomorphism restricts properly to sheaves
  with the chosen topologies.

  Furthermore, since any weak equivalence of categories induces an equivalence between the corresponding presheaf categories,
  we have
  $$
  \mbox{\bf PreSh}(\calC)\simeq\mbox{\bf PreSh}(\mathbf{LG}(\calC)),
  $$
  and by combining these equivalences we obtain,
  $$
  \mbox{\bf PreSh}({\mathbf G}(\calC))\simeq\mbox{\bf PreSh}(\calC)\mbox{ and }\mbox{\bf PreSh}(\calG)\simeq\mbox{\bf PreSh}(\mathbf{GL}(\calG)).
  $$
  We show that this equivalence also restricts properly to sheaves
  with the chosen topologies.
  Finally, while Lawson and Steinberg were able to establish an equivalence
  between categories of sheaves on the left-cancellative
  Grothendieck site side,
  our double-categorical perspective allows us to complete the picture and
  establish an equivalence between the categories of sheaves on the Ehresmann
  site side.
  \begin{customprop}{\ref{prop:sheaves-G-and-LG-and-C-and-GC}}
  \begin{enumerate}
    \item
    (Lawson and Steinberg)
    The category of sheaves on an Ehresmann site \((\calG, T)\)
    is equivalent to the category of sheaves on \((\bL(\calG), J_T)\).
    \item
    The category of sheaves on a left-cancellative site \((\calC, J)\)
    is equivalent to the category of sheaves on \((\bG(\calC), T_J)\).
  \end{enumerate}
  \end{customprop}
  \item
  We give an appropriate notion of morphism between Ehresmann sites which
  allows us to take the equivalences between categories of sheaves at the
  object level to an equivalence between the larger 2-categories of
  Grothendieck sites and of Ehresmann sites.

  This is motivated by Karazeris' \cite{Karazeris2004} result that functors
  between Grothendieck
  sites  give rise to geometric morphisms precisely when they are covering
  preserving and covering flat, and we prove the
  corresponding result for double functors between ordered groupoids:

  \begin{customthm}{\ref{geom}}
  If a functor
  \(M\colon(\mathcal{G}, T)\rightarrow (\mathcal{G}', T')\)
  of Ehresmann sites is covering preserving and covering flat,
  then \(M\) induces a geometric morphism
  \(\sh(M)\colon\sh(\mathcal{G}', T')\rightarrow \sh(\mathcal{G}, T)\).\qed
  \end{customthm}

  It is such functors that we call morphisms of Ehresmann sites
  which give a 2-category of Ehresmann sites that features in the following
  biequivalence.

  \begin{customthm}{\ref{adj-sites}}
  The functors $\mathbf{G}$ and $\mathbf{L}$ induce a 2-adjoint biequivalence
  $$
  \mbox{\bf lcGsite}\simeq\mbox{\bf Esite}_{\mbox{\scriptsize max}}.
  $$
  \end{customthm}
\end{enumerate}

The Comparison Lemma in \cite{kock-moerdijk-1991} gives sufficient conditions
on a morphism of sites so that it induces an equivalence between the
corresponding categories of sheaves.
As a final application, this paper
adapts Kock and Moerdijk's conditions to the context of ordered groupoids,
we are able to express and prove an analogous result for a morphism of
Ehresmann sites:

\begin{customthm}{\ref{thm:cl-ehresmann}}
[Comparison Lemma for Ehresmann Sites]
Let $M: (\mathcal{G}, T)\rightarrow (\mathcal{G}', T')$ be a morphism of
Ehresmann sites.
If $M$ is locally full, locally faithful, and locally surjective,
then the functor
$M^*: \sh(\mathcal{G}', T')\rightarrow \sh(\mathcal{G}, T)$
is full and faithful.
If further $M$ is co-continuous, then $M^*$ is an equivalence. \qed
\end{customthm}

\section{Ordered Groupoids as Double Categories}
\label{sec:ordered-groupoids-as-double-categories}
In order to describe the correspondence between ordered groupoids and left-cancellative categories in more detail,
we first introduce a new way of representing  ordered groupoids in terms of double categories.

\begin{dfn}
 An {\em ordered groupoid} is a category $\calG$  in which all arrows are invertible and such that
\begin{enumerate}
\item
There is a partial order relation on the arrows which extends to the objects via the identity arrows;
\item
The order is preserved by taking inverses and composition: if $a\le b$ then $a^{-1}\le b^{-1}$
and if $a\le b$ and $c\le d$ then $ac\le bd$;
\item
When $f\colon A\to B$ and $A'\le A$ there is a unique arrow $f'\colon A'\to B'$ such that $f'\le f$.
We also write $f|_{A'}$ for $f'$.
\end{enumerate}
\end{dfn}

Note that the first and second conditions in this definition imply that if $f\le g$ and $f\colon A\to B$
and $g\colon C\to D$ then $A\le C$ and $B\le D$. Hence, we can also view this as an internal groupoid
$$
\xymatrix{
\calG_1\times_{\calG_0}\calG_1\ar[r]^-m&\calG_1\ar[r]^i & \calG_1\ar@<.8ex>[r]^t\ar@<-.8ex>[r]_s
& \calG_0\ar[l]|u
}
$$
in the category of partially ordered sets with an additional property corresponding to the last requirement given above:
the domain arrow $\xymatrix@1{\calG_1\ar[r]^s&\calG_0}$ is a fibration as functor between posetal categories.
It follows from the groupoid symmetry that the target arrow $t$ is an opfibration.
So we observe that ordered groupoids have both domain and range restriction.

Another way to view this last diagram is as a double category $\calG$ where the vertical arrows give
the poset structure and the horizontal arrows give the groupoid structure.
Double cells have the following form
\begin{equation}\label{double_cell}
\xymatrix@R=1.8em{
X\ar[r]^g\ar[d]|-{\scriptscriptstyle\bullet}\ar@{}[dr]|\le &Y\ar[d]|-{\scriptscriptstyle\bullet}
\\
X'\ar[r]_{g'}&Y'}
\end{equation}
And this encodes that $X\le X'$, $Y\le Y'$ and $g\le g'$.
Note that in this notation, the fact that $s\colon \calG_1\to\calG_0$ is a fibration corresponds to the statement that
for each diagram
$$
\xymatrix@R=1.8em{
X\ar[d]|-{\scriptscriptstyle\bullet}
\\
X'\ar[r]_{g'}&Y'}
$$
there is a unique diagram (\ref{double_cell}).

The morphisms between ordered groupoids are usually taken to be ordered functors: functors that preserve the order relation.
These correspond precisely to
double functors between the double categories just described. We write {\bf oGpd} for the category of ordered groupoids, considered as double categories with double functors as arrows.

\section{Lawson's Correspondence Revisited}
\label{sec:lawsons-correspondence-revisited}
In \cite{L1} Lawson introduced a correspondence between ordered groupoids and left-cancellative categories; i.e., categories in which all arrows are monomorphisms. We write {\bf lcCat} for the category of left-cancellative categories with functors as morphisms.

Lawson introduced functors $\mbox{\bf oGpd} \to\mbox{\bf lcCat}$ and $\mbox{\bf lcCat} \to\mbox{\bf oGpd}$.
We begin by rewriting these functors in our terminology.

\subsection{The Functors \texorpdfstring{$\bL$}{Lg} and \texorpdfstring{$\bG$}{Lg}}
The functor  ${\mathbf L}\colon \mbox{\bf oGpd} \to\mbox{\bf lcCat}$ is defined as follows.
For an ordered groupoid $\calG$,  the left-cancellative category ${\mathbf L}({\calG})$ has as objects those of $\calG$.
An arrow $A\to B$ in  ${\mathbf L}({\calG})$ is a formal composite of a horizontal arrow in $\calG$ with a vertical arrow in $\calG$:
$$\xymatrix{A\ar[r]^-h & B'\ar[r]|-{\scriptscriptstyle\bullet}& B}$$
where $h$ is a horizontal arrow in $\calG$ and $\xymatrix@1{B'\ar[r]|-{\scriptscriptstyle\bullet}& B}$ is a vertical arrow in $\calG$.
Composition uses the restriction operation in $\calG$,
$$
\xymatrix@R=1.8em{
A\ar[r]^h &B'\ar[d]|-{\scriptscriptstyle\bullet} \ar[r]^{k|_{B'}}\ar@{}[dr]|\le & C''\ar[d]|-{\scriptscriptstyle\bullet}
\\
&B\ar[r]^k &C'\ar[d]|-{\scriptscriptstyle\bullet}
\\
&& C
}
$$
so the composition is given by $\xymatrix@1@C=3em{A\ar[r]^{k|_{B'}h}&C''\ar[r]|-{\scriptscriptstyle\bullet}&C}$. (Note that this is unitary and associative by the uniqueness of the restrictions.)

Conversely, the functor ${\mathbf G}\colon \mbox{\bf lcCat}\to \mbox{\bf oGpd}$ is defined as follows.
For a left-cancellative category $\mathcal C$, the ordered groupoid ${\mathbf G}({\mathcal C})$ has subobjects in $\mathcal C$
as objects; i.e., they are equivalence classes of arrows
$m\colon A\to B$
and $[m\colon A\to B]=[m'\colon A'\to B]$ if there is an isomorphism $k\colon A\stackrel{\sim}{\to} A'$ in $\calC$
such that
$$
\xymatrix{A\ar[rr]^k_\sim\ar[dr]_m && A'\ar[dl]^{m'}
\\
&B
}
$$
commutes.
The horizontal arrows in ${\mathbf G}({\mathcal C})$ are equivalence classes of spans,
$$[m,n]\colon [m]\to [n]$$
The equivalence relation is defined so that $[m,n]=[m',n']$ if and only if there is an isomorphism $h$ making the following diagram commute:
$$
\xymatrix@R=1em{
&A\ar[dl]_m\ar[dr]^n\ar[dd]^h_\wr
\\
B && C
\\
&A'\ar[ul]^{m'}\ar[ur]_{n'}
}
$$
Composition of $[k,m]$ and $[m',n]$ is defined when $[m]=[m']$; i.e., when there is an isomorphism $h$ such that $m'h=m$, giving rise to a diagram
$$
\xymatrix{
&\ar[dl]_k\ar[dr]_{m}\ar[rr]^h_\sim && \ar[dl]^{m'}\ar[dr]^n
\\
&&&&
}$$
in ${\mathcal C}$. The composition is then $\xymatrix@1@C=3em{[k]\ar[r]^-{[k,nh]}&[nh]=[n]}$.

The vertical arrows are given by the order relation on subobjects: there is a unique vertical arrow
$$\xymatrix{[n]\ar[r]|-{\scriptscriptstyle\bullet}&[n']}$$
if there is an arrow $h$ in $\mathcal C$ such that $n=n'h$; i.e., $[n]\le[n']$ as subobjects.

The order relation on arrows is defined by Lawson as: $[m,n]\le [m',n']$ if there is an arrow $h$ in ${\mathcal C}$ such that the diagram
\begin{equation}\label{le-diagrams}
\xymatrix@R=.8em@C=3em{
&A\ar[dl]_m\ar[dr]^n\ar[dd]^h
\\
B && C
\\
&A'\ar[ul]^{m'}\ar[ur]_{n'}
}
\end{equation}
commutes. (Note that this $h$ is unique if it exists.)
This implies then that $[m]\le[m']$ and $[n]\le[n']$.
So double cells in ${\mathbf G}(\mathcal C)$,
$$
\xymatrix@R=1.8em@C=3em{
[m]\ar[d]|-{\scriptscriptstyle\bullet}\ar[r]^{[m,n]}\ar@{}[dr]|\le & [m]\ar[d]|-{\scriptscriptstyle\bullet}
\\
[m']\ar[r]_{[m',n']}&[n']}
$$
correspond to diagrams of the form (\ref{le-diagrams}) in ${\mathcal C}$.
Since there is at most one double cell for any frame of horizontal and vertical arrows, the horizontal and vertical composition of double cells is
determined by the composition of the horizontal and vertical arrows.

\subsection{The Composition \texorpdfstring{$\bL\bG$}{Lg}}
\label{D:eta}
We now describe the results of composing the functors $\mathbf L$ and $\mathbf G$ in our terminology.
For a left-cancellative category $\mathcal C$, the category $\mathbf{LG}(\mathcal C)$ has as objects subobjects  in $\mathcal C$:
$[m\colon A\to B]$.

The arrows in $\mathbf{LG}(\mathcal C)$ are constructed as
$$\xymatrix{[m]\ar[r]^{[m,n']}&[n']\ar[r]|-{\scriptscriptstyle\bullet}&[n]}$$
and this corresponds to a diagram
$$
\xymatrix@R=1.5em@C=3em{
&A'\ar[dl]_m\ar[dr]_{n'}\ar[r]^{h} & A\ar[d]^{n}
\\
B &&C}
$$
in ${\mathcal C}$.

So an arrow $[h]\colon[m\colon A'\to B]\to [n\colon A\to C]$ is represented by an arrow $h\colon A'\to A$.
Furthermore,
$$
\left(\xymatrix@1{[m]\ar[r]^{[h]}&[n]}\right)\equiv\left(\xymatrix@1{[m']\ar[r]^{[h']}&[n']}\right)
$$
if and only if there are isomorphisms $k$ and $\ell$ that make the following diagram commute,
$$
\xymatrix@C=3em@R=1em{
&\ar[dl]_m\ar[r]^h\ar[dd]_\wr^k & \ar[dd]_\ell^\wr\ar[dr]^n
\\
&&&
\\
&\ar[ul]^{m'}\ar[r]_{h'} & \ar[ur]_{n'}
}
$$

Composition of $\xymatrix{[m]\ar[r]^{[h_1]}&[n]}$ and $\xymatrix{[n']\ar[r]^{[h_2]}&[p]}$
is defined when there is an arrow $k$ as in the diagram
$$
\xymatrix{
&\ar[dl]_m\ar[r]^{h_1}& \ar[dr]_{n}\ar[rr]^k & &\ar[dl]^{n'}\ar[r]^{h_2} & \ar[dr]^p
\\
&&&&&&
}
$$
and the composition is $$[h_2kh_1]\colon [m]\to[p].$$

The categories $\mathcal C$ and $\mathbf{LG}({\mathcal C})$ are not isomorphic, but as observed by Lawson \cite[Theorem 2.3.1]{L1}, there is a functor
$$\eta_{\mathcal C}\colon {\mathcal C}\to \mathbf{LG}({\mathcal C})$$
giving an equivalence of categories. (It is
defined on objects by $A\mapsto [1_A]$, and
on arrows by $\left(h\colon A\to B\right)\mapsto(\left[h]\colon [1_A]\to[1_B]\right)$ and note that $[m\colon A\to B]\cong [1_A\colon A\to A]$ and $\eta_{\calC}$ defines an isomorphism $\calC(A,B)\stackrel{\sim}{\longrightarrow}\mathbf{LG}(\calC)([1_A],[1_B])$.) Note that the $\eta_{\mathcal C}$ define a natural transformation $$\eta\colon 1_{\mbox{\scriptsize\bf lcCat}}\Rightarrow \mathbf{LG}.$$

\subsection{The Composition \texorpdfstring{$\bG\bL$}{Lg}}
\label{D:kappa}
For the other composition, $\mathbf{GL}\colon \mbox{\bf oGpd}\to\mbox{\bf oGpd}$, let $\mathcal G$ be an ordered groupoid. Then the objects of
$\mathbf{GL}({\mathcal G})$ are subobjects in $\mathbf{L}(\mathcal G)$, hence equivalence classes,
$$
\xymatrix@C=1.9em{[A\ar[r]^h& B'\ar[r]|-{\scriptscriptstyle\bullet}& B],}
$$
where $h$ is a horizontal arrow in $\mathcal G$ (and therefore invertible). Furthermore,
$$\xymatrix@1{[A\ar[r]^h& B'\ar[r]|-{\scriptscriptstyle\bullet}& B]}=\xymatrix@1{[A'\ar[r]^{h'}& B'\ar[r]|-{\scriptscriptstyle\bullet}& B]}$$ if and only if there is an isomorphism $k\colon A\stackrel{\sim}{\longrightarrow} A'$ such that $h'k=h$.
Note that in this case each equivalence class has a canonical representative, $\xymatrix{(B'\ar[r]|-{\scriptscriptstyle\bullet}& B)}$. We will denote this object by
$$(B', B).$$

Horizontal arrows in $\mathbf{GL}(\calG)$ become then equivalence classes of spans of horizontal arrows in $\calG$,
$$\left[\xymatrix{B'&A\ar[l]_-h\ar[r]^-k&C'}\right]\colon(B', B)\to(C',C)$$
Since $h$ and $k$ are invertible, this span is equivalent to
$$\xymatrix{B' &B'\ar[l]_{1_B}\ar[r]^{kh^{-1}} & C'.}$$
So a horizontal arrow $(B', B)\to (C', C)$ is given by a horizontal arrow $h\colon B'\to C'$ in $\mathcal G$. The vertical arrows and the double cells in $\mathbf{GL}(\mathcal G)$ are obtained as follows: there is a (unique) vertical arrow
$\xymatrix@1{(B',B)\ar[r]|-{\scriptscriptstyle\bullet}&(D', D)}$
if and only if $B=D$ and there is a vertical arrow $\xymatrix@1{B'\ar[r]|-{\scriptscriptstyle\bullet}&D'}$ in $\mathcal G$.
Similarly, double cells in $\mathbf{GL}(\calG)$ are of the form,
$$
\xymatrix@R=1.8em{
(B',  D)\ar[d]|-{\scriptscriptstyle\bullet}\ar@{}[dr]|\le\ar[r]^h & (C', E)\ar[d]|-{\scriptscriptstyle\bullet}
\\
(D',D)\ar[r]_k & (E', E)
}
$$
where
$$
\xymatrix@R=1.8em{
B'\ar[d]|-{\scriptscriptstyle\bullet}\ar@{}[dr]|\le\ar[r]^h & C'\ar[d]|-{\scriptscriptstyle\bullet}
\\
D'\ar[r]_k & E'
}
$$
is a double cell in $\mathcal G$.

Lawson introduced an ordered functor $\kappa_\calG\colon\mathbf{GL}(\calG)\to\calG$ which corresponds to the following double functor with the same name:
\begin{itemize}
\item
on objects,
$\kappa_\calG(B', B)=B'$;
\item
on horizontal arrows, $\kappa_\calG((B', B)\stackrel{h}{\to}(C', C))=(B'\stackrel{h}{\to}C')$;
\item
on vertical arrows, $\kappa_{\calG}(\xymatrix@1{(B', B)\ar[r]|-{\scriptscriptstyle\bullet}&(D', B)})=(\xymatrix@1{B'\ar[r]|-{\scriptscriptstyle\bullet} & D'})$;

\item
on double cells, $\kappa_{\mathcal G}$ maps the cell
$$
\xymatrix{
(B', B)\ar[d]|-{\scriptscriptstyle\bullet}\ar@{}[dr]|\le\ar[r]^h & (C', C)\ar[d]|-{\scriptscriptstyle\bullet}
\\
(D', D)\ar[r]_k & (E', E)
}
$$
in $\mathbf{GL}(\calG)$
to the cell
$$
\xymatrix@R=1.8em{
B'\ar[d]|-{\scriptscriptstyle\bullet}\ar@{}[dr]|\le\ar[r]^h & C'\ar[d]|-{\scriptscriptstyle\bullet}
\\
D'\ar[r]_k & E'
}
$$
in $\calG$.
\end{itemize}

Lawson was not able to show that this is a weak equivalence, because it is not clear a priori what a weak equivalence of ordered groupoids should be. However, in the language of double categories this problem has been resolved in the literature: \cite{bunge1979} gives a description of internal weak equivalences in terms of effective descent maps, and \cite{EKV} shows that these weak equivalences are part of a Quillen model structure on the category of double categories, induced by the regular epimorphism topology on the category of categories.

\begin{dfn}
A functor between internal categories $F\colon{\mathbb C}\to{\mathbb D}$ in some ambient category $\mathcal D$ is a {\em weak equivalence} if
it satisfies the following two conditions:
\begin{enumerate}
\item
It is {\em essentially surjective} in the sense that the composition of the top arrows in
$$
\xymatrix{
{\mathbb C}_0\times_{{\mathbb D}_0}{\mathbb D}_1\ar[r]^{\pi_2}\ar[d]_{\pi_1} & {\mathbb D}_1\ar[d]_s\ar[r]^t &{\mathbb D}_0
\\
{\mathbb C}_0\ar[r]_{F_0}&{\mathbb D}_0}
$$
is of effective descent in $\mathcal D$;
\item
It is {\em fully faithful} in the sense that the following square is a pullback,
$$
\xymatrix@C=4em{
{\mathbb C}_1\ar[r]^{F_1}\ar[d]_{(s,t)} & {\mathbb D}_1\ar[d]^{(s,t)}
\\
{\mathbb C}_0\times{\mathbb C}_0\ar[r]_{F_0\times F_0}&{\mathbb D}_0\times{\mathbb D}_0
}
$$
\end{enumerate}
\end{dfn}

For $\mathcal D=\mbox{\bf Cat}$, the category of small categories, internal categories are double categories and it was shown in \cite{JST}
that a functor $F\colon {\mathcal X}\to{\mathcal Y}$ is of effective descent if and only if the following induced functions of sets are surjective: $F_0\colon {\mathcal X}_0\to{\mathcal Y}_0$, $F_1\colon {\mathcal X}_1\to{\mathcal Y}_1$ and
$F_1\times F_1\colon {\mathcal X}_1\times_{{\mathcal X}_0}{\mathcal X}_1\to{\mathcal Y}_1\times_{{\mathcal Y}_0}{\mathcal Y}_1$.

\begin{prop}
The double functor $\kappa_\calG\colon\mathbf{GL}(\calG)\to\calG$
is a weak equivalence of double categories.
\end{prop}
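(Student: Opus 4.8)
The plan is to verify the two conditions in the definition of weak equivalence for internal categories in $\mbox{\bf Cat}$, using the concrete criterion imported from \cite{JST}: since the ambient category is $\mbox{\bf Cat}$, the double functor $\kappa_\calG$ is a weak equivalence precisely when (a) the three functions $(\kappa_\calG)_0$, $(\kappa_\calG)_1$, and $(\kappa_\calG)_1\times(\kappa_\calG)_1$ on the relevant sets are surjective (essential surjectivity, reformulated as effective descent), and (b) the fully-faithfulness square is a pullback. Here the ``objects'' $\calG_0$ of the double category are the objects-and-vertical-arrows category, and the ``arrows'' $\calG_1$ are the horizontal-arrows-with-double-cells category; so I must interpret $(\kappa_\calG)_0$ and $(\kappa_\calG)_1$ as functors between these posetal/groupoidal categories and then check surjectivity on their object- and arrow-sets as dictated by the criterion.

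\emph{Essential surjectivity.} First I would unwind what the three surjectivity conditions say using the explicit description of $\mathbf{GL}(\calG)$ computed in Section~\ref{D:kappa}. On objects, $\kappa_\calG(B',B)=B'$, and every object $B'$ of $\calG$ is hit by the canonical object $(B',B)$ (indeed by $(B',\hat B)$ for any $B\ge B'$), so $(\kappa_\calG)_0$ is surjective on objects. On horizontal arrows, a horizontal arrow $(B',B)\to(C',C)$ is given by precisely a horizontal arrow $h\colon B'\to C'$ in $\calG$, and $\kappa_\calG$ sends it to $h$; this is a bijection on horizontal arrows, hence surjective. For the vertical/cell data I would similarly read off from the description of vertical arrows and double cells in $\mathbf{GL}(\calG)$ that each vertical arrow $B'\to D'$ and each double cell of $\calG$ lifts (e.g.\ choose $B=D=E$ as a common upper bound), giving surjectivity of the map on composable pairs. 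The point throughout is that the canonical representatives $(B',B)$ let me lift every piece of data in $\calG$ back along $\kappa_\calG$.

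\emph{Fully faithfulness.} Here I must show the square
$$
\xymatrix@C=4em{
\mathbf{GL}(\calG)_1\ar[r]^{(\kappa_\calG)_1}\ar[d]_{(s,t)} & \calG_1\ar[d]^{(s,t)}
\\
\mathbf{GL}(\calG)_0\times\mathbf{GL}(\calG)_0\ar[r]_{(\kappa_\calG)_0\times(\kappa_\calG)_0}&\calG_0\times\calG_0
}
$$
is a pullback in $\mbox{\bf Cat}$. Concretely this amounts to checking that, given objects $(B',B)$ and $(C',C)$ and a horizontal arrow $h\colon B'\to C'$ in $\calG$ together with a compatible double cell in $\calG$, there is a \emph{unique} horizontal arrow and double cell in $\mathbf{GL}(\calG)$ over them with the prescribed vertical boundaries. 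The uniqueness is exactly the observation made earlier that a horizontal arrow $(B',B)\to(C',C)$ \emph{is} a horizontal arrow $h\colon B'\to C'$, and that there is at most one double cell for any given frame in $\mathbf{GL}(\calG)$; the existence follows since the double cells of $\mathbf{GL}(\calG)$ are by definition exactly the double cells of $\calG$ on the underlying $B',C',D',E'$. So the pullback condition reduces to this definitional correspondence between cells of $\mathbf{GL}(\calG)$ and cells of $\calG$.

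\emph{The main obstacle} I anticipate is bookkeeping rather than conceptual: I must be careful about the roles of the ``decoration'' $B$ in an object $(B',B)$, which is invisible to $\kappa_\calG$ yet matters for when vertical arrows and cells exist in $\mathbf{GL}(\calG)$ (recall a vertical arrow $(B',B)\to(D',D)$ requires $B=D$). The delicate step is therefore verifying that this extra datum does not obstruct either surjectivity or the pullback property---i.e.\ that for every required lift I can consistently choose the second coordinates so that all the domain/codomain and boundary conditions in $\mathbf{GL}(\calG)$ are simultaneously met. Once the canonical-representative normal form is in hand, each verification is a routine diagram chase against the explicit formulas for $\mathbf{GL}(\calG)$, and the criterion from \cite{JST} packages essential surjectivity into the three surjectivity checks above.
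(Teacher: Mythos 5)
Your proposal is correct and takes essentially the same approach as the paper: essential surjectivity is verified through the surjectivity criteria of \cite{JST} (on objects, vertical arrows, and composable pairs of vertical arrows, each lifted by a suitable choice of second coordinate such as $(B',B)$ or $(B,C)\to(C,C)$), and full faithfulness follows from the definitional correspondence between horizontal arrows and double cells of $\mathbf{GL}(\calG)$ and those of $\calG$, which the paper phrases as $\kappa_\calG$ being order preserving and order reflecting. One minor remark: the surjectivity on horizontal arrows that you list under essential surjectivity is not one of the three conditions required there (those concern only the vertical category $\calG_0$); it belongs instead to the fully-faithfulness pullback, where you correctly use it.
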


\begin{proof}
We first check that $\kappa_\calG$ is essentially surjective on objects.
So we need to check that the induced functor $t\pi_2\colon \mathbf{GL}(\calG)_0\times_{\calG_0}\calG_1\to\calG_0$ is of effective descent.
\begin{itemize}
\item
It is surjective on objects, because for any object $B$ in $\calG$, $\kappa_\calG(B, B)=B$.
\item
It is surjective on arrows, because the arrows in $\calG_0$ are the vertical arrows of $\calG$, and for any vertical arrow $\xymatrix@1{B\ar[r]|-{\scriptscriptstyle\bullet}&C}$,
$\kappa_\calG\xymatrix@1{((B, C)\ar[r]|-{\scriptscriptstyle\bullet}&(C, C))}=\xymatrix@1{(B\ar[r]|-{\scriptscriptstyle\bullet}&C).}$
\item
Finally, $\kappa_{\calG}\times\kappa_{\calG}\colon\mathbf{GL}(\calG)_1\times_{\mathbf{GL}(\calG)_0}\mathbf{GL}(\calG)_1\to \calG_1\times_{\calG_0}\calG_1$ is surjective, since  for any composable pair of vertical arrows in $\calG$, $\xymatrix@1{B\ar[r]|-{\scriptscriptstyle\bullet}&C \ar[r]|-{\scriptscriptstyle\bullet}& D}$,
$$(\kappa_\calG\times\kappa_\calG)\left(\xymatrix@1@C=2em{(B, D)\ar[r]|-{\scriptscriptstyle\bullet}&(C, D)\ar[r]|-{\scriptscriptstyle\bullet}&(D, D)}\right)=\left(\xymatrix@1@C=2em{B\ar[r]|-{\scriptscriptstyle\bullet}&C\ar[r]|-{\scriptscriptstyle\bullet}&D}\right).$$
\end{itemize}
We note that $\kappa_{\calG}$ is fully faithful because it is both order reflecting and order preserving.\end{proof}

We would like to combine the results from this section
in saying that the functors $\mathbf{L}\colon \mbox{\bf oGpd}\to\mbox{\bf lcCat}$ and $\mathbf{G}\colon\mbox{\bf lcCat}\to\mbox{\bf oGpd}$ define an equivalence of categories $\mbox{\bf lcCat}\simeq\mbox{\bf oGpd}$.
However, since the components of the natural transformations $\eta\colon 1_{\mbox{\bf\scriptsize lcCat}}\Rightarrow \mathbf {LG}$ and $\kappa\colon \mathbf{GL}\Rightarrow 1_{\mbox{\bf\scriptsize oGpd}}$ are only (weak) equivalences, rather than isomorphisms, we will need to consider {\bf oGpd} and {\bf lcCat} as 2-categories to do this. We will denote these 2-categories by \textbf{\textit{oGpd}} and \textbf{\textit{lcCat}}. The 2-structure of \textbf{\textit{lcCat}} is inherited from {\bf Cat}: the 2-cells are natural transformation. To describe \textbf{\textit{oGpd}} as a 2-category we need to do more work as spelled out in the next section.

\section{\textbf{\textit{oGpd}} as a 2-Category}
\label{sec:ogpd-as-a-2-category}
We clearly want the arrows of the 2-category \textbf{\textit{oGpd}} to be double functors.
When constructing a 2-category from a double category one chooses usually either the horizontal or the vertical transformations as the 2-cells of the resulting 2-category.
The components of a horizontal transformation are horizontal arrows and double cells in the codomain double category, so for ordered groupoids, all horizontal transformations are invertible. The components of a vertical transformation are vertical arrows and double cells, so there is a vertical transformation
\(F\Rightarrow_v G\colon\calG\rightarrow\calH\)
if and only if \(F\leq G\).

However, because each ordered groupoid has a fibration as domain, we obtain a third option. To describe this third option, first recall that for any two double categories $\mathbb C$ and $\mathbb D$, $\mbox{\bf DblCat}({\mathbb C},{\mathbb D})$
can be viewed as a double category with double functors as objects, horizontal transformations as horizontal arrows, vertical transformations as vertical arrows and modifications as double cells.

\begin{prop}
For ordered groupoids $\calG$ and $\calH$, the double category $$\mbox{\bf DblCat}(\calG,\calH)$$ is again an ordered groupoid.
\end{prop}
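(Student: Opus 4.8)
The plan is to unwind the three defining conditions of an ordered groupoid for the double category $\mbox{\bf DblCat}(\calG,\calH)$, whose objects are the double functors $\calG\to\calH$, whose horizontal arrows are the horizontal transformations, whose vertical arrows are the vertical transformations, and whose double cells are the modifications. Two of the three conditions are already essentially recorded in the discussion preceding the proposition: every horizontal transformation between double functors valued in $\calH$ is invertible, since its components are arrows of the groupoid $\calH$ and its naturality cells are horizontally invertible cells of $\calH$, so the horizontal arrows form a groupoid; and there is a vertical transformation $F\Rightarrow_v G$ exactly when $F\le G$, such a transformation being unique because $\calH$ carries at most one vertical arrow between any two objects and at most one double cell over any frame. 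First I would make this uniqueness explicit and deduce that the vertical transformations equip the double functors with a genuine partial order: reflexivity and transitivity come from the identity and composite vertical transformations, while antisymmetry follows from antisymmetry of $\le$ in $\calH$ applied objectwise and on horizontal arrows. The second axiom, compatibility of the order with horizontal inverses and horizontal composition, I would then check componentwise, where it reduces directly to the corresponding statements in $\calH$.

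The substance of the proof is the third axiom, the restriction (fibration) property. Here I must show that for every vertical transformation $\beta\colon F\Rightarrow_v F'$ (equivalently $F\le F'$) and every horizontal transformation $\gamma'\colon F'\to G'$ there is a unique double functor $G$, horizontal transformation $\gamma\colon F\to G$, and vertical transformation $G\Rightarrow_v G'$ assembling into a modification whose top edge $\gamma$ satisfies $\gamma\le\gamma'$. The construction is pointwise: for each object $X$ of $\calG$ I set $\gamma_X:=\gamma'_X|_{FX}$, the restriction in $\calH$ of the horizontal arrow $\gamma'_X\colon F'X\to G'X$ along the vertical arrow $FX\le F'X$, and I define $GX$ to be its codomain, so that automatically $GX\le G'X$ and $\gamma_X\le\gamma'_X$. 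On horizontal arrows I transport $F$ along the invertible $\gamma$, setting $Gh=\gamma_Y\,(Fh)\,\gamma_X^{-1}$ for $h\colon X\to Y$, which makes the naturality squares of $\gamma$ commute by construction.

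The main obstacle is to show that these pointwise data cohere into an honest double functor $G$ rather than a mere assignment. Concretely I would verify that $G$ preserves horizontal composition and identities (automatic from the conjugation formula), is monotone on the vertical arrows, and commutes with restriction; the last two are the delicate points, and each is proved by exhibiting two candidate restrictions in $\calH$ filling the same frame and invoking the uniqueness clause of the fibration property of $\calH$ to identify them. The naturality cells of $\gamma$ are obtained in the same manner, by restricting the naturality cells of $\gamma'$, and the modification identities for the resulting square then follow once more from the thinness of cells in $\calH$. Uniqueness of the whole package is immediate from uniqueness of restrictions: any competing $(\tilde G,\tilde\gamma)$ must have $\tilde\gamma_X$ equal to the restriction of $\gamma'_X$ along $FX\le F'X$, hence equal to $\gamma_X$. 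Assembling these verifications shows that the source map of $\mbox{\bf DblCat}(\calG,\calH)$ is a fibration, which together with the first paragraph completes the proof that $\mbox{\bf DblCat}(\calG,\calH)$ is an ordered groupoid.
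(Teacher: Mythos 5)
Your proposal is correct and follows essentially the same route as the paper: identify the order via vertical transformations and the thinness of modifications, then establish the fibration property by restricting the horizontal transformation pointwise in $\calH$ and propagating coherence through the uniqueness of restrictions. The only cosmetic difference is that you define $G$ on horizontal arrows by conjugation $Gh=\gamma_Y(Fh)\gamma_X^{-1}$ whereas the paper defines $Gh$ as the restriction of the image under the larger functor and then derives the conjugation identity; the two definitions coincide and require the same uniqueness-of-restriction verifications.
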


\begin{proof}
We saw above that the vertical transformations simply encode the order structure on the double functors and all horizontal transformations are invertible.

We now describe what the modifications are in this double category.
For four double functors $F, G, H, K\colon \calG\to\calH$ with $F\le H$ and $G\le G$ and horizontal transformations $\alpha\colon F\stackrel{\sim}{\longrightarrow}G$
and $\beta\colon H\stackrel{\sim}{\longrightarrow}K$, a modification $\Theta$,
$$
\xymatrix@R=1.8em{
F\ar[r]^\alpha\ar[d]|-{\scriptscriptstyle\bullet}\ar@{}[dr]|\Theta & G\ar[d]|-{\scriptscriptstyle\bullet}
\\
H\ar[r]_\beta& K
}
$$
is given by a family of double cells
$$
\xymatrix@R=1.8em{
FX\ar[r]^{\alpha_X}\ar[d]|-{\scriptscriptstyle\bullet}\ar@{}[dr]|{\Theta_X} & GX\ar[d]|-{\scriptscriptstyle\bullet}
\\
HX\ar[r]_{\beta_X}& KX
}
$$
in $\calH$, indexed by objects $X$ in $\calG$, and satisfying certain naturality conditions.
However, $\calH$ has only double cells of the form
$$\xymatrix@R=1.8em{\ar[r]^\sim\ar[d]|-{\scriptscriptstyle\bullet}\ar@{}[dr]|\le&\ar[d]|-{\scriptscriptstyle\bullet}\\
\ar[r]_\sim &
}
$$
Hence, each cell $\Theta_X$ is the unique double cell encoding the fact that $\alpha_X\le \beta_X$.
So we may write
$$
\xymatrix@R=1.8em{
F\ar[r]^\alpha\ar[d]|-{\scriptscriptstyle\bullet}\ar@{}[dr]|\le & G\ar[d]|-{\scriptscriptstyle\bullet}
\\
H\ar[r]_\beta& K
}
$$
for $\Theta$.

It remains to show that the domain arrow $$\xymatrix{s\colon\mbox{\bf DblCat}(\calG,\calH)_1\ar[r]&\mbox{\bf DblCat}(\calG,\calH)_0}$$ is a fibration.
So suppose that $F\le H$ and $\beta\colon H\Rightarrow_h K$.
We construct $\beta|_r$ as follows.
For each object $X$ in $\calG$, we have
$$
\xymatrix@R=1.8em{
FX\ar[d]|-{\scriptscriptstyle\bullet}\\
HX\ar[r]_{\beta_X}& KX}
$$
and we use the lifting property of $\calH$ to complete the square
$$
\xymatrix@R=1.8em@C=4em{
FX\ar[d]|-{\scriptscriptstyle\bullet}\ar@{}[dr]|\le\ar[r]^{\beta_X|_{FX}}& GX\ar[d]|-{\scriptscriptstyle\bullet}\\
HX\ar[r]_{\beta_X}& KX}
$$
To turn this assignment of $G$ into a double functor,
consider a horizontal arrow $h\colon X\to Y$ in $\calG$.
We have by horizontal naturality that $Kh\circ \beta_X=\beta_y\circ Hh$,
and we have the following double cells,
$$
\xymatrix@R=1.8em@C=4em{
FX\ar[d]|-{\scriptscriptstyle\bullet}\ar@{}[dr]|{\le}\ar[r]^{Fh}
  &FY \ar[d]|-{\scriptscriptstyle\bullet}\ar@{}[dr]|{\le}\ar[r]^{\beta_Y|_{FY}}
  &GY \ar[d]|-{\scriptscriptstyle\bullet}
\\
HX\ar[r]_{Hh}& HY\ar[r]_{\beta_Y}&KY
}
$$
So we see that in
$$
\xymatrix@R=1.8em@C=4em{
FX\ar[d]|-{\scriptscriptstyle\bullet}\ar@{}[dr]|\le\ar[r]^{\beta_X|_{FX}} &GX\ar[d]|-{\scriptscriptstyle\bullet}\\
HX\ar[r]_{\beta_X}&KX\ar[r]_{Kh}& KY
}
$$
the codomain of the lifting $Kh|_{GX}$ has codomain $FY$ (since
$(Kh|_{GX})\circ(\beta_X|_{FX})=(Kh\circ\beta_X)|_{FX}=(\beta_Y\circ Hh)|_{FX}=(\beta_Y|_{FY})\circ Fh$). So we may define $Gh=Kh|_{GX}$.
Thus defined, $G$ preserves identities and composition, because the liftings are unique. We also see from the diagrams above that $Gh\circ(\beta_X|_{FX})=(\beta_Y|_{FY})\circ Fh$.

For the definition of $G$ on vertical arrows, suppose that $X\le X'$.
Then we have the following composites of vertical arrows in $\calH$:
$$
\xymatrix{
FX\ar[r]|-{\scriptscriptstyle\bullet} &HX\ar[r]|-{\scriptscriptstyle\bullet}& HX'}\mbox{ and }\xymatrix{FX\ar[r]|-{\scriptscriptstyle\bullet} &FX'\ar[r]|-{\scriptscriptstyle\bullet}& HX'}$$
Hence the horizontal arrow $\xymatrix@1{HX'\ar[r]^{\beta_{X'}}&KX'}$ can be
restricted to $FX$ in two ways:
$$
\xymatrix@R=1.8em@C=4em{
FX\ar[d]|-{\scriptscriptstyle\bullet}\ar@{}[dr]|\le\ar[r]^{\beta_{X'}|_{FX}} & G'X\ar[d]|-{\scriptscriptstyle\bullet} && FX\ar[d]|-{\scriptscriptstyle\bullet}\ar@{}[dr]|\le\ar[r]^{\beta_{X}|_{FX}} & GX\ar[d]|-{\scriptscriptstyle\bullet}
\\
FX'\ar[d]|-{\scriptscriptstyle\bullet}\ar@{}[dr]|\le\ar[r]_{\beta_{X'}|_{FX'}}&GX'\ar[d]|-{\scriptscriptstyle\bullet} &\mbox{and}& HX\ar[d]|-{\scriptscriptstyle\bullet}\ar@{}[dr]|\le\ar[r]_{\beta_{X}}&KX\ar[d]|-{\scriptscriptstyle\bullet}
\\
GX'\ar[r]_{\beta_{X'}}&KX'&&HX'\ar[r]_{\beta_{X'}}&KX'
}
$$
Hence, $G'X=GX$ and $\xymatrix@1{GX\ar[r]|-{\scriptscriptstyle\bullet}&GX'}$ as required.

Finally, to define $G$ on double cells, let
$$
\xymatrix@R=1.8em{
X\ar[d]|-{\scriptscriptstyle\bullet}\ar[r]^h\ar@{}[dr]|\le&Y\ar[d]|-{\scriptscriptstyle\bullet}
\\
X'\ar[r]_{h'}&Y'
}
$$
be a double cell in $\calG$.
We calculate the restriction $Kh'|_{GX}$ in two different ways.
First we take the following factorization,
$$
\xymatrix@R=1.8em{
GX\ar[d]|-{\scriptscriptstyle\bullet}\ar[r]^{Gh}\ar@{}[dr]|\le &GY\ar[d]|-{\scriptscriptstyle\bullet}
\\
KX\ar[d]|-{\scriptscriptstyle\bullet}\ar@{}[dr]|\le \ar[r]_{Kh}&KY\ar[d]|-{\scriptscriptstyle\bullet}
\\
KX'\ar[r]_{Kh'}&KY'}
$$
This shows that $Kh'|_{FX}=Kh|_{FX}=Gh$.
Now consider
$$
\xymatrix@R=1.8em@C=4em{
GX\ar[d]|-{\scriptscriptstyle\bullet}\ar[r]^{Gh'|_{GX}}\ar@{}[dr]|\le &GY\ar[d]|-{\scriptscriptstyle\bullet}
\\
GX'\ar[d]|-{\scriptscriptstyle\bullet}\ar@{}[dr]|\le \ar[r]_{Gh'}&GY'\ar[d]|-{\scriptscriptstyle\bullet}
\\
KX'\ar[r]_{Kh'}&KY'}
$$
This shows that $Gh=Kh'|_{GX}=Gh'|_{GX}$
and hence we have the double cell
$$
\xymatrix@R=1.8em{
GX\ar[d]|-{\scriptscriptstyle\bullet}\ar@{}[dr]|\le\ar[r]^{Gh}&GY\ar[d]|-{\scriptscriptstyle\bullet}
\\
GX'\ar[r]_{Gh'}&GY'}
$$
as required.
\end{proof}

In summary, we can apply our functor $\mathbf{L}$ to the ordered groupoid $\mbox{\bf DblCat}(\calG,\calH)$
to obtain a left-cancellative category $\mathbf{L}\left(\mbox{\bf DblCat}(\calG,\calH)\right)$.
This allows us to define the 2-category \textbf{\textit{oGpd}} with ordered groupoids as objects and $\mbox{\textbf{\textit{oGpd}}}(\calG,\calH)=\mathbf{L}\left(\mbox{\bf DblCat}(\calG,\calH)\right)$. This means that
a 2-cell $$(\alpha,\le)\colon F\Rightarrow G$$ is a formal composite
$$\xymatrix{F\ar@{=>}[r]^-\sim_-\alpha &G'\le G},$$
where $G'\colon\calG\to\calH$ is a double functor, $\alpha$ is a horizontal transformation  and $\le$ denotes a vertical transformation as described above.
We call such a formal composite a $\Lambda$-transformation.
Vertical composition of these $\Lambda$-transformations is given by composition in $\mathbf{L}\left(\mbox{\bf DblCat}(\calG,\calH)\right)$, using the fibration property of the domain map.

To define horizontal composition note that since both horizontal and vertical transformations allow for left and right whiskering, whiskering automatically extends to $\Lambda$-transformations:
Given ordered groupoids $\calG$, $\calH$ and $\calK$ with double functors
$$
\xymatrix{\calG\ar@<.5ex>[r]^F\ar@<-.5ex>[r]_{G}&\calH\ar@<.5ex>[r]^H\ar@<-.5ex>[r]_K&\calK}
$$
and $\Lambda$-transformations $(\alpha,\le)\colon F\Rightarrow G$ and $(\beta,\le)\colon H\Rightarrow K$, we have that
$$(\beta,\le)F=(\beta F,\le)\colon HF\Rightarrow KF$$
and
$$H(\alpha,\le)=(H\alpha,\le)\colon HF\Rightarrow HG.$$
We want to show  that this gives rise to a well-defined notion of horizontal composition.
In the proof we will need the following results about horizontal transformations between double functors of ordered groupoids.

\begin{lma}\label{middle-four-tricks}
Let $\calG$, $\calH$ and $\calK$ be ordered groupoids with double functors $$
\xymatrix{\calG\ar@<1ex>[r]^F\ar[r]|{G'}\ar@<-1ex>[r]_{G}&\calH\ar@<1ex>[r]^H\ar[r]|{K'}\ar@<-1ex>[r]_{K}&\calK}
$$ with horizontal transformations $\alpha\colon F\Rightarrow G'$ and $\beta\colon H\Rightarrow K'$ and vertical transformations $G'\le G$ and $K'\le K$.
Then we have the following restrictions in $\mbox{\bf DblCat}(\calG,\calK)$:
\begin{enumerate}
\item
$(\beta G)|_{HG'}=\beta G'$.
\item
$(K\alpha)|_{K'F}=K'\alpha$.
\end{enumerate}
\end{lma}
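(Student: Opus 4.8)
The plan is to lean entirely on the defining universal property of restriction in the ordered groupoid $\mbox{\bf DblCat}(\calG,\calK)$. Recall from the previous proposition that the restriction of a horizontal transformation $\gamma$ along a vertical arrow into its source is computed componentwise and, crucially, is characterized as the \emph{unique} horizontal transformation $\gamma'$ whose source is the prescribed smaller double functor and which satisfies $\gamma'\le\gamma$. Hence, to prove each identity, it suffices to verify that the claimed right-hand side is a horizontal transformation with the correct source lying below the transformation being restricted; uniqueness of the restriction then forces the equality, and no genuine computation is needed.

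For part (1), first observe that $\beta G'$ and $\beta G$ are both horizontal transformations, being right whiskerings of the horizontal transformation $\beta$ with the double functors $G'$ and $G$, and they have sources $HG'$ and $HG$ respectively. Since $G'\le G$ and the double functor $H$ preserves the vertical order, we have $HG'\le HG$, so $\beta G'$ has exactly the source required of $(\beta G)|_{HG'}$. It then remains to check $\beta G'\le\beta G$, which by definition of $\le$ on horizontal transformations reduces to the componentwise inequalities $\beta_{G'X}\le\beta_{GX}$ for every object $X$ of $\calG$. This is precisely what the coherence cell of $\beta$ on the vertical arrow $G'X\to GX$ of $\calH$ supplies: that cell is the double cell in $\calK$ with top $\beta_{G'X}$, bottom $\beta_{GX}$, and vertical sides $H(G'X)\to H(GX)$ and $K'(G'X)\to K'(GX)$, which is exactly the assertion $\beta_{G'X}\le\beta_{GX}$. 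Uniqueness of restriction then yields $(\beta G)|_{HG'}=\beta G'$.

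Part (2) runs entirely parallel, with the horizontal and vertical data interchanged. Here $K'\alpha$ and $K\alpha$ are left whiskerings, hence horizontal transformations, with sources $K'F$ and $KF$; since $K'\le K$ we get $K'F\le KF$, so $K'\alpha$ has the source required of $(K\alpha)|_{K'F}$. The componentwise inequality $K'(\alpha_X)\le K(\alpha_X)$ now comes not from the naturality of a horizontal transformation but from the vertical transformation $K'\le K$ evaluated at the horizontal arrow $\alpha_X$, which is the double cell in $\calK$ encoding $K'(\alpha_X)\le K(\alpha_X)$. Uniqueness of restriction again gives $(K\alpha)|_{K'F}=K'\alpha$.

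The one real subtlety to get right is the careful unwinding of what $\le$ between horizontal transformations means inside $\mbox{\bf DblCat}(\calG,\calK)$, namely that it is tested componentwise by the (unique) double cells of $\calK$, and then matching those cells to the two \emph{different} sources of the two inequalities: in (1) the governing cell is the horizontal-transformation coherence cell of $\beta$ applied to a vertical arrow coming from $G'\le G$, whereas in (2) it is the vertical-transformation cell witnessing $K'\le K$ applied to the horizontal arrow $\alpha_X$. Once these are correctly identified, both statements follow immediately from the uniqueness of restrictions in an ordered groupoid.
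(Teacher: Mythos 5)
Your proof is correct and follows essentially the same route as the paper's: both arguments reduce the claim to the uniqueness of restrictions and then verify the required double cells componentwise, using the vertical functoriality of $\beta$ at the vertical arrow $G'X\to GX$ for part (1) and the horizontal functoriality of the vertical transformation $K'\le K$ at $\alpha_X$ for part (2). Your additional explicit check that the sources $HG'$ and $K'F$ are the prescribed ones is a small amount of detail the paper leaves implicit, but it does not change the argument.
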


\begin{proof}
Since the restrictions are unique, we need only to check that the assigned horizontal transformations fit.
So let $X$ be an object in $\calG$.

For the first restriction, we need to check that the following is a well-defined double cell in $\calK$,
$$
\xymatrix@R=1.8em{
HG'X\ar[d]|-{\scriptscriptstyle\bullet}\ar[r]^{\beta_{G'X}}\ar@{}[dr]|\le & K'G'X\ar[d]|-{\scriptscriptstyle\bullet}
\\
HGX\ar[r]_{\beta_{GX}}& K'GX
}
$$
This is a well-defined double cell by the vertical functoriality of $\beta$ applied to the arrow $\xymatrix@1{G'X\ar[r]|-{\scriptscriptstyle\bullet}& GX}$.

For the second restriction, we need to check that the following is a well-defined double cell in $\calK$,
$$
\xymatrix@R=1.8em{
K'FX\ar[d]|-{\scriptscriptstyle\bullet}\ar[r]^{K'\alpha_X}\ar@{}[dr]|\le & K'G'X\ar[d]|-{\scriptscriptstyle\bullet}
\\
KFX\ar[r]_{K\alpha_X}&KG'X.
}
$$
This follows from the horizontal functoriality of the vertical transformation $K'\le K$, applied to the arrow $\xymatrix@1{FX\ar[r]^{\alpha_X}&G'X}$.
\end{proof}

\begin{prop}
Given ordered groupoids $\calG$, $\calH$, and $\calK$ with double functors
$$
\xymatrix{\calG\ar@<.5ex>[r]^F\ar@<-.5ex>[r]_{G}&\calH\ar@<.5ex>[r]^H\ar@<-.5ex>[r]_K&\calK}
$$
and $\Lambda$-transformations $(\alpha,\le)\colon F\Rightarrow G$ and $(\beta,\le)\colon H\Rightarrow K$. Then,
$$
(K\alpha,\le)\cdot (\beta F,\le)=(\beta G,\le)\cdot (H\alpha,\le)
$$
where $\cdot$ denotes vertical composition.
\end{prop}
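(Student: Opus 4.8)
The plan is to reduce the claimed interchange identity to the composition rule in the left-cancellative category $\mathbf{L}(\mathbf{DblCat}(\calG,\calK))$ and then compare the two resulting formal composites factor by factor. Write $\alpha\colon F\Rightarrow G'$ with $G'\le G$ and $\beta\colon H\Rightarrow K'$ with $K'\le K$ for the horizontal and vertical data underlying the two $\Lambda$-transformations. First I would spell out the four whiskerings: $(\beta F,\le)$ is $\beta F\colon HF\to K'F$ with $K'F\le KF$; $(K\alpha,\le)$ is $K\alpha\colon KF\to KG'$ with $KG'\le KG$; $(H\alpha,\le)$ is $H\alpha\colon HF\to HG'$ with $HG'\le HG$; and $(\beta G,\le)$ is $\beta G\colon HG\to K'G$ with $K'G\le KG$.

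Next I would compute the left-hand side $(K\alpha,\le)\cdot(\beta F,\le)$ using the composition of $\mathbf{L}$: one restricts the horizontal factor $K\alpha$ along the vertical factor $K'F\le KF$ of the lower $\Lambda$-transformation. By part~(2) of Lemma~\ref{middle-four-tricks} this restriction is exactly $K'\alpha\colon K'F\to K'G'$, with codomain vertical arrow $K'G'\le KG'$; hence the composite is $HF\xrightarrow{K'\alpha\circ\beta F}K'G'$ followed by the vertical transformation $K'G'\le KG'\le KG$. Symmetrically, for the right-hand side $(\beta G,\le)\cdot(H\alpha,\le)$ one restricts $\beta G$ along $HG'\le HG$; by part~(1) of Lemma~\ref{middle-four-tricks} this restriction is $\beta G'\colon HG'\to K'G'$, with codomain vertical $K'G'\le K'G$, giving the composite $HF\xrightarrow{\beta G'\circ H\alpha}K'G'$ followed by $K'G'\le K'G\le KG$.

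It then remains to compare the two formal composites, both of which are an arrow $HF\to K'G'$ followed by a vertical transformation $K'G'\le KG$. The vertical tails agree automatically: since $\mathbf{DblCat}(\calG,\calK)$ is an ordered groupoid there is at most one vertical arrow between any two objects, so $K'G'\le KG'\le KG$ and $K'G'\le K'G\le KG$ name the same vertical transformation. The hard part will be the horizontal factors, where I must check $K'\alpha\circ\beta F=\beta G'\circ H\alpha$ as horizontal transformations $HF\Rightarrow K'G'$. Evaluating at an object $X$ of $\calG$, this is the identity $K'(\alpha_X)\circ\beta_{FX}=\beta_{G'X}\circ H(\alpha_X)$, which is precisely the naturality square of the horizontal transformation $\beta\colon H\Rightarrow K'$ applied to the horizontal arrow $\alpha_X\colon FX\to G'X$ of $\calH$. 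Once this is in place, the two composites coincide both horizontally and vertically, which is the desired interchange law.
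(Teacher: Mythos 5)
Your proposal is correct and follows essentially the same route as the paper: both sides are computed via the composition rule of $\mathbf{L}(\mbox{\bf DblCat}(\calG,\calK))$, the restrictions $(K\alpha)|_{K'F}=K'\alpha$ and $(\beta G)|_{HG'}=\beta G'$ are supplied by the two parts of Lemma \ref{middle-four-tricks}, and the remaining identity $K'\alpha\circ\beta F=\beta G'\circ H\alpha$ is the ordinary middle-four law, which you correctly unwind as the horizontal naturality of $\beta\colon H\Rightarrow K'$ at $\alpha_X$. Your extra observation that the vertical tails coincide automatically (at most one vertical arrow between any two objects of an ordered groupoid) is a point the paper leaves implicit, but it is the same argument.
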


\begin{proof}
Let $X$ be an object of $\calG$. Then the component of $K(\alpha,\le)\cdot (\beta,\le)F$ at $X$ is obtained by considering the following diagram in $\calK$:
$$
\xymatrix@C=4em@R=1.8em{
HFX\ar[r]^{\beta_{FX}}&K'FX\ar[d]|-{\scriptscriptstyle\bullet} \ar[r]^{K\alpha_X|_{K'FX}} & AX\ar[d]|-{\scriptscriptstyle\bullet}
\\
&KFX\ar[r]_{K\alpha_X}& KG'X\ar[d]|-{\scriptscriptstyle\bullet}
\\
&&KGX}
$$
By Lemma \ref{middle-four-tricks}, $K\alpha_X|_{K'FX}=K'\alpha_X$ and hence, $AX=K'G'X$. We conclude that the component of $K(\alpha,\le)\cdot (\beta,\le)F$ at $X$ is $(K'\alpha_X\circ\beta_{FX},\le)$.

The component of $(\beta,\le)G\cdot H(\alpha,\le)$ at $X$ is calculated as follows:
$$
\xymatrix@C=4em@R=1.8em{
HFX\ar[r]^{H\alpha_X}&HG'X\ar[d]|-{\scriptscriptstyle\bullet}\ar@{}[dr]|\le\ar[r]^-{\beta_{GX}|_{HG'X}} & BX\ar[d]|-{\scriptscriptstyle\bullet}
\\
&HGX\ar[r]_{\beta_{GX}}&K'GX\ar[d]|-{\scriptscriptstyle\bullet}
\\
&&KGX
}$$
By Lemma \ref{middle-four-tricks}, $\beta_{GX}|_{HG'X}=\beta_{G'X}$ and hence, $BX=K'G'X$. So the component of $(\beta,\le)G\cdot H(\alpha,\le)$ at $X$ is
$(\beta_{G'X}\circ H\alpha_X,\le)$.
Finally, note that $K'\alpha_X\circ\beta_{FX}=\beta_{G'X}\circ H\alpha_X$
by ordinary middle-four for horizontal transformations.
The result of the lemma now follows.
\end{proof}

\begin{prop}
Horizontal and vertical composition of $\Lambda$-transformations as defined above satisfy the middle-four interchange law.
\end{prop}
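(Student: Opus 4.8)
The plan is to verify the interchange law for a $2\times2$ configuration of $\Lambda$-transformations
$$
\xymatrix{
\calG \ar@/^1.2pc/[r]^{F}\ar[r]|{G}\ar@/_1.2pc/[r]_{H} & \calH \ar@/^1.2pc/[r]^{P}\ar[r]|{Q}\ar@/_1.2pc/[r]_{R} & \calK
}
$$
with $(\phi,\le)\colon F\Rightarrow G$ and $(\psi,\le)\colon G\Rightarrow H$ in the left column, and $(\chi,\le)\colon P\Rightarrow Q$ and $(\omega,\le)\colon Q\Rightarrow R$ in the right column. Writing the chosen representatives as $F\Rightarrow\bar G\le G$, $G\Rightarrow\bar H\le H$, $P\Rightarrow\bar Q\le Q$ and $Q\Rightarrow\bar R\le R$, both sides of the interchange equation are $\Lambda$-transformations $PF\Rightarrow RH$. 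The first reduction is that a $\Lambda$-transformation is an arrow of the left-cancellative category $\mathbf{L}(\mbox{\bf DblCat}(\calG,\calK))$, hence a formal composite of a horizontal transformation followed by a vertical transformation, with neither factor subject to any further quotient; moreover the vertical factor is determined by its source and target double functors, since vertical arrows in the ordered groupoid $\mbox{\bf DblCat}(\calG,\calK)$ are unique between fixed endpoints. It therefore suffices to show that the two sides have the same horizontal-transformation part together with the same intermediate target double functor.

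Next I would expand both sides using the two composition rules. For vertical composition I use composition in $\mathbf{L}(\mbox{\bf DblCat}(\calG,\calK))$, which restricts the horizontal part of the upper cell along the vertical part of the lower cell; for horizontal composition I use the horizontal composite $(\beta,\le)\circ(\alpha,\le)$, which by the computation in the preceding proposition equals $(K'\alpha\circ\beta F,\le)$ with intermediate target $K'G'$. Carrying this out, the left-hand side $\big((\omega,\le)\cdot(\chi,\le)\big)\circ\big((\psi,\le)\cdot(\phi,\le)\big)$ becomes a single pasting of whiskered transformations and their restrictions, with intermediate target built from $\bar R$, $\bar H$ and the restrictions $\omega|_{\bar Q}$ and $\psi|_{\bar G}$; the right-hand side $\big((\omega,\le)\circ(\psi,\le)\big)\cdot\big((\chi,\le)\circ(\phi,\le)\big)$ becomes a pasting in which the restriction is instead taken after the horizontal composites $\bar R\psi\circ\omega G$ and $\bar Q\phi\circ\chi F$ have been formed.

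The comparison of the two pastings is where the work lies, and it is carried out using three ingredients: the functoriality of restriction (its distributivity over composition) and its uniqueness, which lets me break a restriction of a composite into a composite of restrictions; Lemma~\ref{middle-four-tricks}, which rewrites the whiskered restrictions $(K\alpha)|_{K'F}=K'\alpha$ and $(\beta G)|_{HG'}=\beta G'$ that occur on opposite sides; and the ordinary middle-four interchange law for horizontal transformations between double functors, which holds since these assemble into a strict $2$-category. Because restriction in an ordered groupoid is computed objectwise, I would in fact run the comparison componentwise: fixing an object $X$ of $\calG$, every term becomes a horizontal (hence invertible) arrow together with a vertical arrow of $\calK$, and the required equality of components then follows from the fibration (unique-restriction) property of $\calK$ combined with ordinary interchange in $\calK$.

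I expect the main obstacle to be bookkeeping: correctly identifying the intermediate target double functors produced by the nested whiskerings and restrictions, and verifying that the restriction taken after horizontal composition on the right coincides with the composite of restrictions taken before horizontal composition on the left. This is precisely what Lemma~\ref{middle-four-tricks} and the uniqueness of restrictions pin down; once the horizontal-transformation parts are shown to agree, their common intermediate target is forced, the vertical parts then agree automatically, and the interchange law follows.
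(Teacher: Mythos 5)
Your proposal is correct and follows essentially the same route as the paper: both arguments work componentwise at an object $X$ of $\calG$, use Lemma \ref{middle-four-tricks} to identify the restrictions of the whiskered transformations, invoke ordinary middle-four interchange for horizontal transformations to match the resulting pastings, and conclude by uniqueness of liftings that the remaining vertical parts and intermediate targets agree. The paper simply carries out the two pastings explicitly as diagrams in $\calK$, which is the bookkeeping you correctly identify as the only remaining work.
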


\begin{proof}
Consider the following double functors and $\Lambda$-cells between ordered groupoids:
$$
\xymatrix@C=4em{
\calG \ar@<4ex>[r]^F\ar@<2ex>@{}[r]|{\Downarrow(\alpha,\le)}\ar[r]|{G}\ar@<-2ex>@{}[r]|{\Downarrow(\gamma,\le)}\ar@<-4ex>[r]_H
 & \calH  \ar@<4ex>[r]^K\ar@<2ex>@{}[r]|{\Downarrow(\beta,\le)}\ar[r]|{L}\ar@<-2ex>@{}[r]|{\Downarrow(\delta,\le)}\ar@<-4ex>[r]_M&\calK
}
$$
We first calculate a part of $((\delta,\le)\cdot(\beta,\le))\circ((\gamma,\le)\cdot(\alpha,\le))$ and $((\delta\circ\gamma)\cdot(\beta\circ\alpha))$ respectively, using the results from Lemma \ref{middle-four-tricks}:
$$
\xymatrix@R=1.8em{
KFX\ar[r]^{K\alpha_X}&KG'X \ar[d]|-{\scriptscriptstyle\bullet}
\\
& KGX\ar[r]^{K\gamma_X}&KH'X \ar[d]|-{\scriptscriptstyle\bullet}\ar[r]^{\beta_{H'X}}\ar@{}[dr]|\le & L'H'X \ar[d]|-{\scriptscriptstyle\bullet}
\\
&&KHX\ar[r]_{\beta_{HX}}& L'HX \ar[d]|-{\scriptscriptstyle\bullet}
\\
&&& LHX\ar[r]_{\delta_{HX}}&M'HX \ar[d]|-{\scriptscriptstyle\bullet}
\\
&&&& MHX
}
$$
and
$$
\xymatrix@R=1.8em{
KFX\ar[r]^{K\alpha_X}&KG'X \ar[d]|-{\scriptscriptstyle\bullet}
\\
& KGX\ar[r]^{\beta_{GX}} & L'GX  \ar[d]|-{\scriptscriptstyle\bullet}\ar@{}[dr]|\le \ar[r]^{L'\gamma_X} & L'H'X  \ar[d]|-{\scriptscriptstyle\bullet}
\\
&&LGX\ar[r]_{L\gamma_X} & LH'X \ar[d]|-{\scriptscriptstyle\bullet}
\\
&&&LHX\ar[r]_{\delta_{HX}} & M'HX  \ar[d]|-{\scriptscriptstyle\bullet}
\\
&&&& MHX
}
$$
Note that $\beta_{H'X}\circ K\gamma_X=L'\gamma_X\circ\beta_{GX}$ by interchange for horizontal transformations. Hence, taking the remaining liftings in both diagrams will result in the same composites.
\end{proof}

\section{The Equivalence of 2-Categories}
\label{sec:the-equivalence-of-2-categories}

In this section we show that there is a 2-adjunction between the  2-category of ordered groupoids, double functors (ordered functors), and $\Lambda$-transformations and the
2-category of left-cancellative categories, functors and natural transformations.

As was observed by Lawson and Steinberg, any ordered groupoid of the form ${\mathbf G}(\calC)$ where $\calC$ is a left-cancellative category has maximal objects in the sense that each object is less than or equal to a unique maximal object. We will show that when we restrict ourselves to ordered groupoids with this property we obtain a biequivalence of 2-categories.

In our earlier introduction of the functors $\mathbf L$ and $\mathbf G$ we only gave their description on objects. We will now include their description on arrows (double functors and functors respectively) and then extend them to 2-functors; i.e., give their description on $\Lambda$-transformations and natural transformations respectively.

For a double functor $F\colon \calG\to\calH$, the functor $\mathbf{L}(F)\colon\mathbf{L}(\calG)\to\mathbf{L}(\calH)$ is on objects the same as $F$ and on arrows the extension is obvious: $\mathbf{L}(F)(h,\le)=(F(h),\le)$, and this is well-defined, since $F$ sends vertical arrows to vertical arrows, so it preserves the order relation. Now let $(\alpha,\le)\colon F\Rightarrow G$ be a $\Lambda$-transformation. Then $\mathbf{L}(\alpha,\le)\colon \mathbf{L}(F)\Rightarrow \mathbf{L}(G)$ is the natural transformation with components $(\alpha_X,\le)$.
In order to show that this is indeed natural, let $A\stackrel{h}{\rightarrow}B'\le B$ be an arrow in $\mathbf{L}(\calG)$.
Then we need to check that the following square commutes in $\mathbf{L}(\calH)$,
\begin{equation}\label{D:naturality}
\xymatrix@C=4em{
FA\ar[r]^{F(h,\le)}\ar[d]_{\mathbf{L}(\alpha,\le)_A} & FB\ar[d]^{\mathbf{L}(\alpha,\le)_B}
\\
GA\ar[r]_{G(h,\le)}&GB
}
\end{equation}
The composition $\mathbf{L}(\alpha,\le)_B\circ F(h,\le)$ is calculated as follows (in $\calH$):
$$
\xymatrix{
FA\ar[r]^{Fh}&FB'\ar[d]|-{\scriptscriptstyle\bullet}\ar[r]^{\alpha_{B'}}\ar@{}[dr]|\le & G'B'\ar[d]|-{\scriptscriptstyle\bullet}
\\
&FB\ar[r]_{\alpha_B} & G'B \ar[d]|-{\scriptscriptstyle\bullet}
\\
&&GB
}
$$
So $\mathbf{L}(\alpha,\le)_B\circ F(h,\le)=(\alpha_{B'}\circ Fh,\le)$.

The composition $G(h,\le)\circ\mathbf{L}(\alpha,\le)$ is calculated as follows:
$$
\xymatrix{
FA\ar[r]^{\alpha_A}&G'A\ar[d]|-{\scriptscriptstyle\bullet}\ar[r]^{G'h}\ar@{}[dr]|\le & G'B'\ar[d]|-{\scriptscriptstyle\bullet}
\\
&GA\ar[r]_{Gh} & GB' \ar[d]|-{\scriptscriptstyle\bullet}
\\
&&GB
}
$$
So $G(h,\le)\circ\mathbf{L}(\alpha,\le)=(G'h\circ\alpha_A,\le)$.
Now $\alpha_{B'}\circ Fh=G'h\circ\alpha_A$ by horizontal naturality of $\alpha$, so the naturality square (\ref{D:naturality}) for $\mathbf{L}(\alpha,\le)$ commutes.

It is straightforward to check that $\mathbf{L}$ preserves horizontal and vertical composition of 2-cells.

In the other direction, the functor $\mathbf{G}$ sends a functor $K\colon \calC\to\calD$ between left-cancellative categories to the double functor
$\mathbf{G}(K)\colon\mathbf{G}(\calC)\to\mathbf{G}(\calD)$ which is defined as follows. On objects, $\mathbf{G}(K)\colon [m\colon A\to B]
\mapsto [Km\colon KA\to KB]$. On horizontal arrows, $\mathbf{G}(K)([m,n])=[Km,Kn]$. It is straightforward to check that this is well-defined on
equivalence classes and preserves composition and identities.  Furthermore, since $K$ sends subobjects to subobjects, $\mathbf{G}(K)$ sends vertical
arrows to well-defined vertical arrows and double cells to double cells.

Now let $\theta\colon K\Rightarrow K'$ be a natural transformation.
Then the $\Lambda$-transformation $\mathbf{G}(\theta)$ has components
given by $$\mathbf{G}(\theta)_{[m]}=\left(\xymatrix@1@C=8em{[Km]\ar[r]^-{[Km,K'm\circ\theta_{\mbox{\scriptsize dom}(m)}]}&[K'm\circ\theta_{\mbox{\scriptsize dom}(m)}]\le[K'm]}\right).$$

To check that this is well-defined we need to show three things. First that the assignment $[m]\mapsto [K'm\circ\theta_{\mbox{\scriptsize dom}(m)}]$
on objects extends to a functor $\mathbf{G}(\calC)\to\mathbf{G}(\calD)$; call this functor $T$.
Second that the arrows $[Km,K'm\circ\theta_{\mbox{\scriptsize dom}(m)}]$ form the components of a horizontal transformation $\mathbf{G}(K)\Rightarrow_h T$
and third that the $[K'm\circ\theta_{\mbox{\scriptsize dom}(m)}]\le[K'm]$ form the components of a vertical transformation $T\Rightarrow_v \mathbf{G}(K')$.

To extend the definition of $T$ to horizontal arrows, note that for $[m,n]\colon[m]\to[n]$ in $\mathbf{G}(\calC)$, we have that
$\mbox{dom}(m)=\mbox{dom}(n)$, so $[m,n]\mapsto[K'm\circ\theta_{\mbox{\scriptsize dom}(m)},K'n\circ\theta_{\mbox{\scriptsize dom}(n)}]$ is
well-defined as far as shape is concerned.
To check that it is well-defined on equivalence classes and that this assignment preserves the partial order, consider the following commutative diagram in $\calC$:
\begin{equation}\label{DP:le1}
\xymatrix@R=.8em@C=3em{
&A\ar[dd]_\ell\ar[dl]_m\ar[dr]^n
\\
B&&C
\\
&A'\ar[ul]^{m'}\ar[ur]_{n'}
}
\end{equation}
This gives rise to the following commutative diagram in $\calD$:
\begin{equation}\label{DP:le2}
\xymatrix@R=.8em@C=3em{
&&KA\ar[dl]_{\theta_A}\ar[dr]^{\theta_A}\ar[4,0]|{K\ell}
\\
&K'A\ar[dd]|{K'\ell}\ar[dl]_{K'm}&&K'A\ar[dd]|{K'\ell}\ar[dr]^{K'n}
\\
K'B&&&&K'C
\\
&K'A'\ar[ul]^{K'm'}&&K'A'\ar[ur]_{K'n'}
\\
&&KA'\ar[ul]^{\theta_{A'}}\ar[ur]_{\theta_{A'}}
}
\end{equation}
This shows that when $[m,n]=[m',n']$ (i.e., when $\ell$ is an isomorphism), then
$T[m,n]=T[m',n']$ (since $K\ell$ is then an isomorphism as well).
Furthermore, it shows that for any double cell
$$
\xymatrix@C=3em@R=1.5em{
[m]\ar[r]^{[m,n]}\ar[d]|-{\scriptscriptstyle\bullet}\ar@{}[dr]|\le&[n]\ar[d]|-{\scriptscriptstyle\bullet}
\\
[m']\ar[r]_{[m',n']}&[n']
}
$$
in $\mathbf{G}(\calC)$
(corresponding to the existence of an arbitrary arrow $\ell$ in (\ref{DP:le1}))
there is a corresponding double cell in $\mathbf{G}(\calD)$,
$$
\xymatrix@C=3em@R=1.5em{
T[m]\ar[d]|-{\scriptscriptstyle\bullet}\ar[r]^{T[m,n]}\ar@{}[dr]|\le & T[n]\ar[d]|-{\scriptscriptstyle\bullet}
\\
T[m']\ar[r]_{T[m',n']}&T[n']
}
$$
(corresponding to $K\ell$ in (\ref{DP:le2})). So $T$ can be extended to a double functor $\mathbf{G}(\calC)\to\mathbf{G}(\calD)$.

The proof that the $t_{[m]}=[Km,K'm\circ\theta_{\mbox{\scriptsize dom}(m)}]$ form the components of a horizontal transformation
$t\colon \mathbf{G}(K)\Rightarrow_h T$ (i.e., that they satisfy horizontal naturality and vertical functoriality) is completely straightforward.
The same is true for the proof that the $T[m]\le K'[m]$ form the components of a vertical transformation.

We finally need to check that $\mathbf{G}$ thus defined preserves horizontal and vertical composition of 2-cells.
For vertical composition, suppose we have natural transformations $\xymatrix@1{K\ar@{=>}[r]^\theta &  K'\ar@{=>}[r]^{\theta'}&K''}$.
Then the component of $\mathbf{G}(\theta')\cdot\mathbf{G}(\theta)$ at $[m]$ is calculated as follows,
$$
\xymatrix@C=6.75em{
[Km]\ar[r]^-{t_{[m]}} &\ar[d]|-{\scriptscriptstyle\bullet} [K'm\circ\theta_{\mbox{\scriptsize dom}(m)}]
\ar[rr]^-{[K'm\theta_{\mbox{\scriptsize dom}(m)},K''m\theta'_{\mbox{\scriptsize dom}(m)}\theta_{\mbox{\scriptsize dom}(m)}]}\ar@{}[drr]|-\le
&&[K''m\circ\theta'_{\mbox{\scriptsize dom}(m)}\theta_{\mbox{\scriptsize dom}(m)}]\ar[d]|-{\scriptscriptstyle\bullet}
\\
&[K'm]\ar[rr]_-{t'_{[m]}}&&\ar[d]|-{\scriptscriptstyle\bullet} [K''m\circ\theta'_{\mbox{\scriptsize dom}(m)}]
\\
&&&[K''m]
}
$$
The composition of the top two horizontal arrows is
$[Km,K''m\theta'_{\mbox{\scriptsize dom}(m)}\theta_{\mbox{\scriptsize dom}(m)}]=\mathbf{G}(\theta'\cdot\theta)$, as required.

Since vertical composition is preserved, it is sufficient to check that whiskering is preserved in order to obtain preservation of horizontal composition.
This is a straightforward calculation and left to the reader.

\begin{thm}\label{2-adjunction}
The 2-functors $\mathbf{L}\colon \mbox{\textbf{\textit{oGpd}}}\to\mbox{\textbf{\textit{lcCat}}}$ and
$\mathbf{G}\colon\mbox{\textbf{\textit{lcCat}}}\to\mbox{\textbf{\textit{oGpd}}}$ define a 2-adjunction,
$$\mbox{\textbf{\textit{oGpd}}}\simeq \mbox{\textbf{\textit{lcCat}}}.$$
\end{thm}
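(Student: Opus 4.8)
The plan is to exhibit $\mathbf{G}$ as a left $2$-adjoint to $\mathbf{L}$, taking the transformations $\eta\colon 1_{\TWOlcCat}\Rightarrow\mathbf{LG}$ and $\kappa\colon\mathbf{GL}\Rightarrow 1_{\TWOoGpd}$ constructed above as the unit and counit. Since we have already checked that $\mathbf{L}$ and $\mathbf{G}$ are $2$-functors and that the components $\eta_\calC$ and $\kappa_\calG$ are (weak) equivalences, what remains is: (i) to promote $\eta$ and $\kappa$ from ordinary natural transformations to $2$-natural transformations, i.e. to verify compatibility with $2$-cells; and (ii) to establish the two triangle identities
$$
\kappa_{\mathbf{G}\calC}\circ\mathbf{G}(\eta_\calC)=1_{\mathbf{G}\calC}
\qquad\text{and}\qquad
\mathbf{L}(\kappa_\calG)\circ\eta_{\mathbf{L}\calG}=1_{\mathbf{L}\calG}.
$$

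First I would dispose of $2$-naturality. Naturality of $\eta$ and $\kappa$ with respect to $1$-cells is Lawson and Steinberg's observation recalled above, so only the $2$-cell squares need attention: for a natural transformation $\theta$ in $\TWOlcCat$ one verifies the whiskering identity for $\eta$ directly from the explicit formula for $\mathbf{G}(\theta)$ and the action of $\mathbf{L}$ on $\Lambda$-transformations, and dually for a $\Lambda$-transformation $(\alpha,\le)$ in $\TWOoGpd$ one verifies the corresponding square for $\kappa$. Both reduce to ordinary middle-four for horizontal transformations together with the uniqueness of restrictions, so they are routine.

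The content of the theorem is the triangle identities, which I would verify by running through objects, horizontal arrows, vertical arrows and double cells, using the canonical representatives built into the constructions of $\mathbf{L}$ and $\mathbf{G}$. On objects both identities are transparent. For the counit triangle, an object $A$ of $\mathbf{L}\calG$ is sent by $\eta_{\mathbf{L}\calG}$ to $[1_A]$, i.e. to the canonical object $(A,A)$ of $\mathbf{GL}\calG$, which $\mathbf{L}(\kappa_\calG)$ returns to $A$. For the unit triangle, an object $[m\colon A\to B]$ of $\mathbf{G}\calC$ is sent by $\mathbf{G}(\eta_\calC)$ to the subobject of $\mathbf{LG}\calC$ represented by $[m]\colon[1_A]\to[1_B]$; decomposing this arrow as the horizontal $[1_A,m]$ followed by the vertical $[m]\le[1_B]$ exhibits its canonical representative as $([m],[1_B])$, which $\kappa_{\mathbf{G}\calC}$ projects to its first coordinate $[m]$. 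The analogous bookkeeping on horizontal arrows (where spans collapse to a single invertible arrow) and on vertical arrows (the order on subobjects) shows the two composites act as identities there as well.

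The step I expect to be the main obstacle is the vertical structure and the double cells, where the restriction operation intervenes: the composite $\mathbf{L}(\kappa_\calG)\circ\eta_{\mathbf{L}\calG}$ threads through the formal horizontal-then-vertical arrows of $\mathbf{L}\calG$ and through the span-plus-restriction presentation of $\mathbf{GL}\calG$, and one must confirm that canonical representatives are matched on the nose, not merely up to the isomorphisms built into the equivalence classes. This is exactly where the fibration property of the source map---the uniqueness of restrictions in an ordered groupoid---is needed: it forces the competing lifts to coincide, so that the triangle identities hold strictly and the unit--counit data assembles into a genuine (strict) $2$-adjunction $\TWOoGpd\simeq\TWOlcCat$ rather than a mere pseudo-adjunction.
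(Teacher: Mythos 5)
Your proposal is correct and follows essentially the same route as the paper: take $\eta$ and $\kappa$ as unit and counit of $\mathbf{G}\dashv\mathbf{L}$, check strict (2-)naturality, and verify the two triangle identities on the nose by tracking canonical representatives through objects, horizontal arrows, vertical arrows and double cells, with the uniqueness of restrictions guaranteeing strictness. The only difference is cosmetic: you flag the 2-cell naturality squares explicitly (which the paper leaves implicit after checking the 1-cell squares), and the paper carries out the unit triangle on a typical double cell and the counit triangle on a typical formal composite, exactly the bookkeeping you describe.
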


\begin{proof}
In order to prove this we will show that the functors $\eta_{\calC}$ from Section \ref{D:eta} form a strong natural transformation of 2-functors
$\eta\colon\mbox{Id}_{\mbox{\textbf{\textit{\scriptsize lcCat}}}}\Rightarrow\mathbf{L}\mathbf{G}$ and the double functors $\kappa_\calG$
from Section \ref{D:kappa} form a strong natural
transformation $\kappa\colon\mathbf{G}\mathbf{L}\Rightarrow \mbox{Id}_{\mbox{\textbf{\textit{\scriptsize oGpd}}}}$
(i.e., all naturality squares commute on the nose).
Furthermore we will show that the triangle identity diagrams for $\eta$ and $\kappa$ commute on the nose as well.

To consider the naturality for $\eta$, let $F\colon\calC\to\calD$ be a functor between left-cancellative categories.
Then the naturality square for $F$ is
$$
\xymatrix{
\calC\ar[d]_F\ar[r]^-{\eta_{\calC}}&\mathbf{L}\mathbf{G}(\calC)\ar[d]^{\mathbf{L}\mathbf{G}(F)}
\\
\calD\ar[r]_-{\eta_{\calD}}&\mathbf{L}\mathbf{G}(\calD).
}$$
The composition $\mathbf{L}\mathbf{G}(F)\circ\eta_{\calC}$ gives on objects,
$$
A\mapsto[1_A]\mapsto[F1_A]=[1_{FA}]
$$
and on arrows,
$$
\left(A\stackrel{f}{\longrightarrow}B\right)\mapsto \left([1_A]\stackrel{[f]}{\longrightarrow}[1_B]\right)\mapsto \left([1_{FA}]\stackrel{[Ff]}{\longrightarrow}[1_{FB}]\right).
$$
The other composition, $\eta_{\calD}\circ F$, gives on objects,
$$A\mapsto FA\mapsto [1_{FA}]$$
and on arrows,
$$
 \left(A\stackrel{f}{\longrightarrow}B\right)\mapsto \left(FA\stackrel{Ff}{\longrightarrow}FB\right)\mapsto \left([1_{FA}]\stackrel{[Ff]}{\longrightarrow}[1_{FB}]\right).
$$
We conclude that the naturality square commutes on the nose.

To consider the naturality for $\kappa$, let $\varphi\colon \calG\to\calH$ be a double functor.
Then the naturality square becomes
$$
\xymatrix{
\mathbf{G}\mathbf{L}(\calG)\ar[d]_{\mathbf{G}\mathbf{L}(\varphi)}\ar[r]^-{\kappa_\calG} & \calG\ar[d]^\varphi
\\
\mathbf{G}\mathbf{L}(\calH)\ar[r]^-{\kappa_\calH} & \calH}
$$
To show that this square commutes, we will check what each of the composites does with a double cell in $\mathbf{G}\mathbf{L}(\calG)$
and its domains and codomains.

A general double cell in $\mathbf{G}\mathbf{L}(\calG)$ is of the form
\begin{equation}\label{gnrldblcell}
\xymatrix@R=1.5em{
(B', B)\ar[d]|-{\scriptscriptstyle\bullet}\ar@{}[dr]|\le\ar[r]^h&(C', C)\ar[d]|-{\scriptscriptstyle\bullet}
\\
(D', B)\ar[r]_k& (E', C)
}
\end{equation}
where
\begin{equation}
\label{induceddblcell}
\xymatrix@R=1.5em{
 B'\ar[d]|-{\scriptscriptstyle\bullet}\ar@{}[dr]|\le\ar[r]^h&C'\ar[d]|-{\scriptscriptstyle\bullet}
\\
D'\ar[r]_k& E'}
\end{equation}
is a double cell in $\calG$.
The double functor $\kappa_\calG$ sends (\ref{gnrldblcell}) to (\ref{induceddblcell}) and $\varphi$ sends  (\ref{induceddblcell})  to the double cell
\begin{equation}
\label{imagedblcell}
\xymatrix@R=1.5em{
 \varphi B'\ar[d]|-{\scriptscriptstyle\bullet}\ar@{}[dr]|\le\ar[r]^{\varphi h}&\varphi C'\ar[d]|-{\scriptscriptstyle\bullet}
\\
\varphi D'\ar[r]_{\varphi k}& \varphi E'}
\end{equation}
in $\calH$.
For the other composition, $\mathbf{G}\mathbf{L}(\varphi)$ sends (\ref{gnrldblcell}) to
\begin{equation}\label{secondimage}
\xymatrix@R=1.5em{
(\varphi B', \varphi B)\ar[d]|-{\scriptscriptstyle\bullet}\ar@{}[dr]|\le\ar[r]^{\varphi h}&(\varphi C', \varphi C)\ar[d]|-{\scriptscriptstyle\bullet}
\\
(\varphi D', \varphi B)\ar[r]_{\varphi k}& (\varphi E', \varphi C)
}
\end{equation}
and $\kappa_\calH$ sends (\ref{secondimage}) to (\ref{imagedblcell}) as required for commutativity.

We will now check the triangle identities,
$$
\xymatrix@C=4em{
\mathbf{G}\ar@{=>}[r]^{\mathbf{G}\eta}\ar@{=}[dr] & \mathbf{G}\mathbf{L}\mathbf{G}\ar@{=>}[d]^{\kappa\mathbf{G}}
  & \mathbf{L}\ar@{=>}[r]^{\eta\mathbf{L}}\ar@{=}[dr] & \mathbf{L}\mathbf{G}\mathbf{L}\ar@{=>}[d]^{\mathbf{L}\kappa}
\\
&\mathbf{G}&&\mathbf{L}
}
$$
For the first triangle, we check the components at a left-cancellative category $\calC$,
$$
\xymatrix{
\mathbf{G}(\calC)\ar[r]^{\mathbf{G}\eta_\calC}\ar@{=}[dr]&\mathbf{G}\mathbf{L}\mathbf{G}(\calC)\ar[d]^{\kappa_{\mathbf{G}\calC}}
\\
&\mathbf{G}(\calC)
}
$$
So we calculate the composition of double functors, $(\kappa_{\mathbf{G}\calC})\circ(\mathbf{G}\eta_\calC)$.
A typical double cell in $\mathbf{G}(\calC)$ is of the form,
$$
\xymatrix@C=4em@R=1.5em{
[n]\ar[d]|-{\scriptscriptstyle\bullet}\ar@{}[dr]|\le\ar[r]^{[n,m]}& [m]\ar[d]|-{\scriptscriptstyle\bullet}
\\
[n']\ar[r]_{[n',m']}& [m']
}
$$
corresponding to a commutative diagram in $\calC$ of the form,
$$
\xymatrix@R=.8em@C=3em{
&A\ar[dd]|v\ar[dl]_{n}\ar[dr]^{m}
\\
B&&C
\\
&A'\ar[ul]^{n'}\ar[ur]_{m'}
}
$$
It's image under $\mathbf{G}\eta_\calC$
is
$$
\xymatrix@C=4.5em@R=1.5em{
[[1_A]\stackrel{[n]}{\longrightarrow}[1_B]]\ar[r]^{[[n],[m]]}\ar[d]|-{\scriptscriptstyle\bullet}\ar@{}[dr]|\le & [[1_A]\stackrel{[m]}{\longrightarrow}[1_C]]\ar[d]|-{\scriptscriptstyle\bullet}
\\
[[1_{A'}]\stackrel{[n']}{\longrightarrow}[1_B]]\ar[r]_{[[n'],[m']]} & [[1_{A'}]\stackrel{[m']}{\longrightarrow}[1_C]]
}
$$
This is the same as
$$
\xymatrix@C=3.5em@R=1.5em{([n], [1_B])\ar[r]^{[1_A]}\ar[d]|-{\scriptscriptstyle\bullet} \ar@{}[dr]|\le& ([m], [1_C])\ar[d]|-{\scriptscriptstyle\bullet}
\\
([n'], [1_B])\ar[r]_{[1_{A'}]} & ([m'], [1_C])}
$$
Now $\kappa_{\mathbf{G}\calC}$ maps this to
$$
\xymatrix@C=3.5em@R=1.5em{
[n]\ar[d]|-{\scriptscriptstyle\bullet}\ar@{}[dr]|\le\ar[r]^{[n,m]}& [m]\ar[d]|-{\scriptscriptstyle\bullet}
\\
[n']\ar[r]_{[n',m']}& [m']
}
$$
and we see that $(\kappa_{\mathbf{G}\calC})\circ(\mathbf{G}\eta_\calC)=1_{\calC}$ as required.

To verify the other triangle identity, let $\calG$ be an ordered groupoid.
Then an arrow of $\mathbf{L}(\calG)$ is of the form $A\stackrel{h}{\longrightarrow}B\le C$ (a formal composite of a horizontal and vertical arrow in $\calG$).
Now $\eta_{\mathbf{L}(\calG)}$ send this arrow to
$$
\xymatrix@C=3em{
[1_{A}]\ar[r]^{[h,\le]}&[1_{C}]
}
$$
and this is equivalent to
$$
[\xymatrix{A\ar[r]|\bullet & A}]\stackrel{[h]}{\longrightarrow}[\xymatrix{B\ar[r]|\bullet & C}]\le [\xymatrix{C\ar[r]|\bullet & C}]
$$
where the latter is seen as the formal composite of a horizontal and vertical arrow in $\mathbf{G}\mathbf{L}(\calG)$.
Note that $\mathbf{L}\kappa_\calG$ sends this composite to the formal composite $A\stackrel{h}{\longrightarrow}B\le C$, as required.

This concludes the proof of Theorem \ref{2-adjunction}.
\end{proof}

We write $\mbox{\textbf{\textit{oGpd}}}_{\mbox{\scriptsize max}}$ for the full sub-2-category of ordered groupoids with maximal objects. (Note that the morphisms in this category need not send maximal objects to maximal objects.)
As was noticed by Lawson and Steinberg in Section 2.1 of \cite{L1},
the functor $\mathbf G$ sends each left-cancellative category to an object of $\mbox{\textbf{\textit{oGpd}}}_{\mbox{\scriptsize max}}$. It is also easy to see that the restricted functor ${\mathbf L}\colon \mbox{\textbf{\textit{oGpd}}}_{\mbox{\scriptsize max}}\to\mbox{\textbf{\textit{lcCat}}}$ is still essentially surjective on objects. With this restriction we obtain an equivalence of 2-categories.

\begin{cor}\label{2-equivalence}
The 2-functors $\mathbf{L}\colon \mbox{\textbf{\textit{oGpd}}}_{\mbox{\scriptsize max}}\to\mbox{\textbf{\textit{lcCat}}}$ and
$\mathbf{G}\colon\mbox{\textbf{\textit{lcCat}}}\to\mbox{\textbf{\textit{oGpd}}}_{\mbox{\scriptsize max}}$ define a 2-adjoint biequivalence,
$$\mbox{\textbf{\textit{oGpd}}}_{\mbox{\scriptsize max}}\simeq \mbox{\textbf{\textit{lcCat}}}.$$
\end{cor}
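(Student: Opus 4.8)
The plan is to invoke the standard characterization that a 2-adjunction is a 2-adjoint biequivalence precisely when its unit and counit are equivalences in the respective 2-categories; Theorem~\ref{2-adjunction} already supplies the 2-adjunction $\mathbf{G}\dashv\mathbf{L}$ with unit $\eta$, counit $\kappa$, and strict triangle identities, so the only new work is to upgrade the components to equivalences after restriction. First I would observe that the 2-adjunction restricts to one between $\mbox{\textbf{\textit{lcCat}}}$ and $\mbox{\textbf{\textit{oGpd}}}_{\mbox{\scriptsize max}}$: since every $\mathbf{G}(\calC)$ has maximal objects, $\mathbf{G}$ corestricts to $\mathbf{G}\colon\mbox{\textbf{\textit{lcCat}}}\to\mbox{\textbf{\textit{oGpd}}}_{\mbox{\scriptsize max}}$, and $\mathbf{L}$ restricts along the full inclusion $\mbox{\textbf{\textit{oGpd}}}_{\mbox{\scriptsize max}}\hookrightarrow\mbox{\textbf{\textit{oGpd}}}$. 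For $\calG\in\mbox{\textbf{\textit{oGpd}}}_{\mbox{\scriptsize max}}$ the counit $\kappa_\calG\colon\mathbf{GL}(\calG)\to\calG$ is a morphism of $\mbox{\textbf{\textit{oGpd}}}_{\mbox{\scriptsize max}}$, because $\mathbf{GL}(\calG)$ lies in the image of $\mathbf{G}$ and hence also has maximal objects; fullness of the inclusion guarantees that naturality and the triangle identities carry over verbatim.

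It then remains to see that $\eta$ and $\kappa$ are componentwise equivalences. For $\eta$ there is nothing to add: each $\eta_\calC\colon\calC\to\mathbf{LG}(\calC)$ is an equivalence of categories by Section~\ref{D:eta}, hence an equivalence in $\mbox{\textbf{\textit{lcCat}}}$, whose 2-cells are ordinary natural transformations. The substance lies in producing a weak inverse to $\kappa_\calG$ inside $\mbox{\textbf{\textit{oGpd}}}_{\mbox{\scriptsize max}}$, and here I would use the maximal objects explicitly. Define a double functor $\lambda_\calG\colon\calG\to\mathbf{GL}(\calG)$ by $\lambda_\calG(B)=(B,\hat B)$ on objects, by $\lambda_\calG(h\colon B\to C)=\bigl(h\colon(B,\hat B)\to(C,\hat C)\bigr)$ on horizontal arrows (legitimate since a horizontal arrow $(B,\hat B)\to(C,\hat C)$ is exactly a horizontal arrow $B\to C$ in $\calG$), and on a vertical arrow $\xymatrix@1{B\ar[r]|-{\scriptscriptstyle\bullet}&D}$ by the vertical arrow $\xymatrix@1{(B,\hat B)\ar[r]|-{\scriptscriptstyle\bullet}&(D,\hat D)}$. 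The point making the last assignment well defined is that $B\le D$ forces $\hat B=\hat D$, both being the unique maximal object above $B$, so the two objects $(B,\hat B)$ and $(D,\hat D)$ share a second coordinate, which is precisely the condition for a vertical arrow of $\mathbf{GL}(\calG)$ to exist. Functoriality on double cells then follows from the uniqueness of double cells in $\mathbf{GL}(\calG)$.

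Finally I would compare the two composites. By construction $\kappa_\calG\circ\lambda_\calG=\mathrm{Id}_\calG$ on the nose, since $\kappa_\calG(B,\hat B)=B$ and $\kappa_\calG$ returns the first coordinate throughout. In the other order, $\lambda_\calG\kappa_\calG(B',B)=(B',\widehat{B'})$, which agrees with $\mathrm{Id}_{\mathbf{GL}(\calG)}$ only up to the horizontal isomorphism $(B',B)\to(B',\widehat{B'})$ induced by $1_{B'}\colon B'\to B'$ in $\calG$ (available because $B\le\hat B=\widehat{B'}$). These components assemble into an invertible horizontal transformation $\mathrm{Id}_{\mathbf{GL}(\calG)}\Rightarrow\lambda_\calG\kappa_\calG$, its naturality squares being forced by the uniqueness of double cells; since horizontal transformations between double functors of ordered groupoids have invertible components and require no nontrivial vertical part, this is an invertible $\Lambda$-transformation, i.e.\ an invertible 2-cell of $\mbox{\textbf{\textit{oGpd}}}_{\mbox{\scriptsize max}}$. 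Hence $\kappa_\calG$ is an equivalence and the restricted 2-adjunction is a biequivalence. I expect the main obstacle to be verifying that $\lambda_\calG$ is genuinely a double functor into $\mathbf{GL}(\calG)$ — in particular the compatibility on vertical arrows and cells forced by $\hat B=\hat D$ — which is exactly where the maximal-object hypothesis is indispensable: without it there is no canonical target $(B,\hat B)$, and $\kappa_\calG$ need admit no weak inverse at all.
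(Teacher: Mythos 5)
Your proposal is correct and follows essentially the same route as the paper: the paper likewise notes that $\eta_\calC$ acquires a pseudo-inverse by choosing representatives of subobjects, and constructs exactly your $\lambda_\calG$ (sending $B\mapsto(B,\hat B)$ and using $B\le D\Rightarrow\hat B=\hat D$ on vertical data) as the pseudo-inverse of $\kappa_\calG$. Your verification of the two composites and of the invertible horizontal transformation $\mathrm{Id}_{\mathbf{GL}(\calG)}\Rightarrow\lambda_\calG\kappa_\calG$ is in fact more detailed than the paper's, which leaves those checks implicit.
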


\begin{proof}
The components of both $\eta$ and $\kappa$ are essential equivalences (of categories and ordered groupoids respectively).
In order to get a biequivalence we need to show that these components have pseudo inverses. To obtain a pseudo inverse for $\eta_{\calC}$, we need to choose
a representative $(\bar{m}\colon \bar{A}_m\to {B})$ for each subobject $[m\colon A\to B]$.
Then each arrow $[h]\colon [m\colon A\to B]\to[m'\colon A'\to B']$ has precisely one
representative $\bar{h}_{m,m'}\colon\bar{A}_m\to \bar{A}'_{m'}$ such that
$[\bar{h}_{m,m'}]\colon [\bar{m}\colon\bar{A}_m\to{B}]\to[\bar{m'}\colon \bar{A}'_{m'}\to {B}']$ is the same as $[h]\colon[m\colon A\to B]\to[n\colon A'\to B']$. So a pseudo inverse of $\eta_\calC$ can be defined by sending an object $[m\colon A\to B]$ to $\bar{A}_m$ and an arrow $[h]\colon[m]\to[m']$ to $\bar{h}_{m,m'}\colon\bar{A}_m\to\bar{A}'_{m'}]$.

To define a pseudo inverse for $\kappa_{\calG}$, write $\hat{A}$ for the maximal object with $A\le \hat{A}$ in $\calG$. Note that when $A\le A'$, then $\hat{A}=\hat{A}'$.
Then a pseudo inverse for $\kappa_\calG$ is given by the assignment
$$
\xymatrix{
B\ar[d]|-{\scriptscriptstyle\bullet}\ar[r]^h\ar@{}[dr]|\le&C\ar[d]|-{\scriptscriptstyle\bullet}\ar@{}[drr]|{\textstyle\mapsto}&&(B,\hat{B})\ar[d]|-{\scriptscriptstyle\bullet}\ar[r]^h\ar@{}[dr]|\le&(C,\hat{C})\ar[d]|-{\scriptscriptstyle\bullet}
\\
D\ar[r]_{k}&E&&(D,\hat{B})\ar[r]_{k}&(E,\hat{C})
}
$$

\end{proof}

\section{Applications}
\label{sec:applications}
\subsection{Presheaves on Ordered Groupoids}\label{D:presheaves}
In terms of double categories, presheaves on ordered groupoids as defined in \cite{lawson2004} can be described similarly to presheaves on ordinary categories. The role of the category {\bf Set} is now taken by the double category ${\mathbb Q}{\bf Set}$ of quartets in the category of sets (as defined by Ehresmann): the objects of  ${\mathbb Q}{\bf Set}$ are sets, the horizontal and vertical arrows are functions and the double cells are commutative squares in {\bf Set}.
Then a presheaf $F$ on an ordered groupoid $\calG$ is a functor
\begin{equation}\label{ogro-presheaf}
F\colon \calG^{\mbox{\scriptsize op,op}}\to{\mathbb Q}\mbox{\bf Set},
\end{equation}
which is contravariant in both the horizontal and vertical direction and sends double cells to commutative squares. Note that by the symmetry of the double category ${\mathbb Q}\mbox{\bf Set}$, horizontal and vertical transformations between such presheaf functors amount to the same thing:  a collection of functions $\alpha_A\colon FA\to F'A$  which is natural in $A$ both when considered with respect to horizontal arrows and with respect to vertical arrows. The category $\mbox{\bf PreSh}(\calG)$ is then defined as the category of double functors as in (\ref{ogro-presheaf})  with these transformations as arrows.

Lawson and Steinberg \cite{lawson2004} show that there is an isomorphism of categories
\begin{equation}\label{LS-presheaves}
\mbox{\bf PreSh}(\calG)\cong\mbox{\bf PreSh}({\mathbf L}(\calG)).
\end{equation}
Furthermore, since any weak equivalence of categories induces an equivalence between the corresponding presheaf categories,
we have
$$
\mbox{\bf PreSh}(\calC)\simeq\mbox{\bf PreSh}(\mathbf{LG}(\calC)),
$$
and by combining these equivalences we obtain,

\begin{equation}
\label{eqn:presheaves-c-gc}
\mbox{\bf PreSh}({\mathbf G}(\calC))\simeq\mbox{\bf PreSh}(\calC)\mbox{ and }\mbox{\bf PreSh}(\calG)\simeq\mbox{\bf PreSh}(\mathbf{GL}(\calG)).
\end{equation}

We will now provide an explicit description of the functors that give the
equivalence
\(\mbox{\bf PreSh}({\mathbf G}(\calC))\simeq\mbox{\bf PreSh}(\calC)\),
which will be needed when proving
Proposition \ref{prop:sheaves-G-and-LG-and-C-and-GC}.
\begin{proposition}
\label{prop:presheaves-c-gc}
The equivalence of categories
\(\mbox{\bf PreSh}({\mathbf G}(\calC))\simeq\mbox{\bf PreSh}(\calC)\)
is given by a pair of functors
\[
  \xymatrix{
  \mbox{\bf PreSh}({\mathbf G}(\calC)) \ar@<+0.5pc>[r]^-{\check{(-)}} &
  \mbox{\bf PreSh}(\calC) \ar@<+0.5pc>[l]^-{\tilde{(-)}}
  }
\]
\end{proposition}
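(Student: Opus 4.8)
The plan is to extract explicit formulas for $\check{(-)}$ and $\tilde{(-)}$ from the chain of equivalences behind (\ref{eqn:presheaves-c-gc}): the Lawson--Steinberg isomorphism (\ref{LS-presheaves}) applied to $\calG=\mathbf{G}(\calC)$, which identifies $\mbox{\bf PreSh}(\mathbf{G}(\calC))$ with $\mbox{\bf PreSh}(\mathbf{LG}(\calC))$, followed by restriction along the weak equivalence $\eta_\calC\colon\calC\to\mathbf{LG}(\calC)$. Since under (\ref{LS-presheaves}) a presheaf on $\mathbf{L}(\mathbf{G}(\calC))$ is exactly the datum of a double-presheaf $F$ on $\mathbf{G}(\calC)$, and since $\eta_\calC$ sends an object $A$ to the subobject $[1_A\colon A\to A]$ and a map $f\colon A\to B$ to the formal composite of the horizontal span $[1_A,f]\colon[1_A]\to[f]$ with the vertical arrow $[f]\le[1_B]$, the forward functor is forced to be
$$\check F(A)=F([1_A\colon A\to A]),\qquad \check F(f)=F([1_A,f])\circ F\big([f]\le[1_B]\big).$$
First I would record these formulas and check directly that $\check F$ is a presheaf on $\calC$ and that $\check{(-)}$ is functorial in $F$; this is mechanical, using only that $F$ is contravariant in both directions together with the composition law of $\mathbf{L}(\mathbf{G}(\calC))$.

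For the reverse functor I would first fix a representative $\bar m\colon \bar A_m\to B$ of each subobject $[m\colon A\to B]$, exactly as in the proof of Corollary \ref{2-equivalence}, and then set
$$\tilde P([m\colon A\to B])=P(\bar A_m),$$
sending a vertical arrow $[n]\le[n']$ (witnessed by $h$ with $n=n'h$) to $P(h)$, and a horizontal arrow $[\bar m]\to[\bar n]$ (witnessed by the isomorphism $\sigma$ of domains) to $P(\sigma)$. The main obstacle is to verify that $\tilde P$ is a genuine double functor $\mathbf{G}(\calC)^{\mbox{\scriptsize op,op}}\to{\mathbb Q}\mbox{\bf Set}$: that it is contravariantly functorial in both the horizontal (groupoid) and the vertical (order) direction, that it respects the equivalence relations defining horizontal arrows and subobjects, and -- the genuinely two-dimensional point -- that it carries each double cell of $\mathbf{G}(\calC)$, i.e.\ each commutative diagram of the shape (\ref{le-diagrams}), to a commutative square in $\mbox{\bf Set}$. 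Each of these reduces to applying $P$ to a diagram (\ref{le-diagrams}) in $\calC$, so the check is routine once representatives are fixed; fixing representatives is precisely what makes horizontal composites typecheck, since the common value $P(A)$ at two equal subobjects is otherwise determined only up to a bijection $P(\mbox{iso})$.

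Finally I would exhibit the two natural isomorphisms witnessing that $\check{(-)}$ and $\tilde{(-)}$ are quasi-inverse. Unwinding the definitions gives $\check{\tilde P}(A)=\tilde P([1_A])=P(A)$ and $\check{\tilde P}(f)=P(1_A)\circ P(f)=P(f)$, using that the horizontal span $[1_A,f]$ has domain-isomorphism $1_A$ and that the vertical arrow $[f]\le[1_B]$ is witnessed by $f$; thus $\check{\tilde P}\cong P$, in fact equal when the identity subobjects are given their identity representatives. In the other direction, for a double-presheaf $F$ the canonical horizontal arrow $[1_A,m]\colon[1_A]\to[m\colon A\to B]$ -- invertible, since $\mathbf{G}(\calC)$ is horizontally a groupoid -- is sent by $F$ to a bijection $F([1_A])\xrightarrow{\sim}F([m])$, and these assemble into a natural isomorphism $\tilde{\check F}\cong F$ whose naturality in both directions again reduces to diagrams of the shape (\ref{le-diagrams}). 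Since $\check{(-)}$ is by construction the composite of (\ref{LS-presheaves}) with restriction along $\eta_\calC$, and $\tilde{(-)}$ is a quasi-inverse for it, the pair realises precisely the equivalence (\ref{eqn:presheaves-c-gc}).
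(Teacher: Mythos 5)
Your proposal is correct and follows essentially the same route as the paper: $\check{(-)}$ is restriction along $\eta_\calC$ under the Lawson--Steinberg identification $\mbox{\bf PreSh}(\bG(\calC))\cong\mbox{\bf PreSh}(\bL\bG(\calC))$, and $\tilde{(-)}$ is defined by fixing representatives $\bar m\colon\bar A_m\to B$ of subobjects, exactly as in the paper's proof. If anything you go further than the paper, which stops after writing down the two functors and leaves both the well-definedness of $\tilde\Phi$ and the quasi-inverse natural isomorphisms to the reader, whereas you exhibit the latter explicitly via the invertible horizontal arrows $[1_{\bar A_m},\bar m]\colon[1_{\bar A_m}]\to[m]$.
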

\begin{proof}
Let $\Phi$ be a presheaf on $\calC$.
To define the corresponding presheaf $\tilde\Phi$ on $\bG(\calC)$, we need to make some choices.
For each objects $[m\colon A\to B]$ in $\bG(\calC)$, choose a representative $[\bar{m}\colon\bar{A}_m\to B]$.
For each horizontal arrow $\xymatrix@1{[m\colon A\to B]\ar[r]^{[m,n]}&[n\colon A\to C]}$
there are unique arrows $\mu_m\colon A\to \bar{A}_{m}$ and $\mu_n\colon A\to \bar{A}_n$ and we write
$$
\xymatrix@C=4em{[\bar{m}] \ar[r]^{\langle\mu_n\circ\mu_m^{-1}\rangle}&[\bar{n}]}$$
for the arrow $[m,n]$.
For a vertical arrow $\xymatrix{[n_1]\ar[r]|{\scriptscriptstyle\bullet}&[n_2]}$,
there is a unique arrow $v_{n_1,n_2}$ in $\calC$ such that
$\bar{n}_2=\bar{n}_1v_{n_1,n_2}$, so we label the vertical arrow as
$$\xymatrix@C=4em{[\bar{n}_1]\ar[r]|{\scriptscriptstyle\bullet}^{\{v_{n_1,n_2}\}}&[\bar{n_2}}]$$
The reader may check that if
$$
\xymatrix@C=4em{
[n_1\colon X\to B]\ar[d]|{\scriptscriptstyle\bullet}\ar[r]^{[n_1,n_2]}\ar@{}[dr]|{\le}& [n_2\colon X\to C]\ar[d]|{\scriptscriptstyle\bullet}
\\
[m_1\colon Y\to B]\ar[r]_{[m_1,m_2]}&[m_2\colon Y\to C]}
$$
is a double cell in $\bG(\calC)$, then the corresponding square
$$
\xymatrix@C=4em{
[\bar{n}_1\colon \bar{X}_{n_1}\to B]\ar[d]|{\scriptscriptstyle\bullet}_{\{v_{n_1,m_1}\}}\ar@{}[dr]|{\le} \ar[r]^{\langle\mu_{n_2}\mu_{n_1}^{-1}\rangle} & [\bar{n}_2\colon \bar{X}_{n_2}\to C]\ar[d]|{\scriptscriptstyle\bullet}^{\{v_{n_2,m_2}\}}
\\
[\bar{m}_1\colon \bar{Y}_{m_1}\to B]\ar[r]_{\langle\mu_{m_2}\mu_{m_1}^{-1}\rangle}&[\bar{m}_1\colon \bar{Y}_{m_1}\to C]}
$$
gives rise to a commutative square in $\calC$:
$$
\xymatrix@C=5em{
\bar{X}_{n_1}
\ar[d]_{v_{n_1,m_1}}
\ar[r]^{\mu_{n_2}\mu_{n_1}^{-1}} & \bar{X}_{n_2}
\ar[d]^{v_{n_2,m_2}}
\\
\bar{Y}_{m_1}\ar[r]_{\mu_{m_2}\mu_{m_1}^{-1}}&\bar{Y}_{m_1}}
$$

The corresponding presheaf $\tilde\Phi$ on $\bG(\calC)$ is then defined by
\begin{itemize}
\item On objects:
$\tilde\Phi([m\colon A\to B])=\Phi(\bar{A}_{m})$;
\item
On a horizontal arrow $\xymatrix@1@C=3em{[m\colon A\to B]\ar[r]^{[m,n]}&[n\colon A\to C]}$, define $\tilde\Phi([m,n])=\Phi(\mu_m\mu_n^{-1})\colon \Phi(\bar{A}_n)\to\Phi(\bar{A}_m)$.
\item
On a vertical arrow $\xymatrix@1@C=3em{[m\colon A\to B]\ar[r]|{\scriptscriptstyle\bullet}^{(m,m')}&[m'\colon A'\to B]}$, define $\tilde\Phi((m,n))=\Phi(v_{m,m'})\colon \Phi(\bar{A}'_{m'})\to\Phi(\bar{A}_m)$.
\end{itemize}
We leave it to the reader to verify that this gives a well-defined presheaf on $\bG(\calC)$.

In the opposite direction, let $\Psi$ be a presheaf on $\bG(\calC)$.
Then define the presheaf $\check\Psi$ on $\calC$ by
$\check\Psi(X)=\Psi[1_X\colon X\to X]$.
For an arrow $f\colon X\to Y$ in $\calC$, we define $\check\Psi(f)$ as the composite of the images under $\Psi$ of the arrows in the diagram,
$$\xymatrix{
[1_A]\ar[r]^{[1,f]}&[f]\ar[d]|{\scriptscriptstyle\bullet}^{v_f}\\&[1_B] }
$$
\end{proof}

In particular, we see that a presheaf topos is an \'etendue if and only if it can be presented as presheaves on an ordered groupoid. The following argument
shows that the isomorphisms and equivalences from
Equations (\ref{LS-presheaves}) and (\ref{eqn:presheaves-c-gc})
are the components of natural transformations
\(\mathbf{PreSh}(-)\Rightarrow\mathbf{PreSh}(\bL(-))\) and
\(\mathbf{PreSh}(-)\Rightarrow\mathbf{PreSh}(\bG(-))\).

\begin{rmk}\label{D:arrowcorrespondence}
The following arguments show that when ordered groupoid morphisms and functors of left-cancellative categories correspond to each other under the biequivalence given in Corollary \ref{2-equivalence}, they produce suitably isomorphic morphisms of presheaf categories.
\begin{enumerate}
\item
By just spelling out the definitions, we see that for a morphism $M\colon\calG\to\calG'$, the induced functor between presheaf toposes is the same as
the one induced by its \(\bL\)-image, $\bL(M)\colon\bL(\calG)\to \bL(\calG')$, in the sense that the following diagram commutes,
  \[
  \xymatrix@C=5pc{
  \mathbf{PreSh}(\calG')
    \ar[r]^{M^*}
    \ar[d]_{\cong}
    &
  \mathbf{PreSh}(\calG)
    \ar[d]^{\cong}
  \\
  \mathbf{PreSh}(\bL(\calG'))
    \ar[r]_{(\bL(M))^*} &
  \mathbf{PreSh}(\bL(\calG))
  }
  \]
where the vertical isomorphisms are the ones from (\ref{LS-presheaves}).
\item
The biequivalence
  \(
  \mbox{\textbf{\textit{oGpd}}}_{\mbox{\scriptsize max}}\simeq
  \mbox{\textbf{\textit{lcCat}}}
  \)
 induces the following diagram
$$
\xymatrix@C=4em{
\calC\ar[r]^F\ar[d]_{\simeq}\ar@{}[dr]|\cong & \calC'\ar[d]^{\simeq}
\\
\bL\bG(\calC)\ar[r]_{\bL\bG(F)}&\bL\bG(\calC').
}
$$
Combining this with the result in the first point of this remark, we obtain the following,
\[
\xymatrix@C=5pc{
  \mathbf{PreSh}(\calC')
    \ar[r]^{F^*}
    \ar[d]_{\simeq}
    \ar@{}[rd]|{\cong}&
  \mathbf{PreSh}(\calC)
    \ar[d]^{\simeq}
  \\
  \mathbf{PreSh}(\bL\bG(\calC'))
    \ar[r]_{(\bL\bG(F))^*}
    \ar[d]_{\cong}
  &
  \mathbf{PreSh}(\bL\bG(\calC))
    \ar[d]^{\cong}
  \\
  \mathbf{PreSh}(\bG(\calC'))
    \ar[r]_{(\bG(F))^*} &
  \mathbf{PreSh}(\bG(\calC))
}
\]
\end{enumerate}
This means that $M^*$ has a particular property, such as being an equivalence of categories or being left exact, if and only if $(\bL(M))^*$ has it and similarly,
$F^*$ has a property if and only if $(\bG(F))^*$ has it.
\end{rmk}

\subsection{Sheaves on Ehresmann Sites}
In this section we review the concept of an Ehresmann topology and reformulate it in double categorical language. An Ehresmann topology as introduced by Lawson and Steinberg \cite{lawson2004} consists of an assignment of special order ideals (so-called {\em covering ideals}) of the poset $\downarrow A=\{A'\le A\}$ for each object $A$, satisfying a number of conditions. Since $\downarrow A$ is part of the vertical structure of the ordered groupoid as double category, we will call these order ideals {\em vertical sieves}.

In Lawson and Steinberg's presentation, the condition on a Grothendieck topology to be closed under pullback was matched by the condition that an Ehresmann topology be closed under a notion of `$\star$-conjugation'. In our set-up we will need the following notion.

\begin{notation}
For a vertical sieve $\mathcal B$ on an object $B$
and a diagram
$$
\xymatrix@R=1.5em@C=3em{
A\ar[r]^f & B'\ar[d]|-{\scriptscriptstyle\bullet}\\
&B}
$$
in an ordered groupoid $\calG$, we define $f^*{\mathcal B}$ to be the following vertical sieve on $A$,
$$
f^*{\mathcal B}=\left\{\xymatrix@1{A'\ar[r]|-{\scriptscriptstyle\bullet}&A}\,|\,\mbox{cod}(f|_{A'})=B''\mbox{ with } \xymatrix@1{(B''\ar[r]|-{\scriptscriptstyle\bullet}&B)}\in{\mathcal B}\right\}.
$$
Note that $\mbox{cod}(f|_{A'})=B''$ if and only if there is a double cell
$$
\xymatrix{
A'\ar[d]|-{\scriptscriptstyle\bullet}\ar@{}[dr]|\le\ar[r]&B''\ar[d]|-{\scriptscriptstyle\bullet}
\\
A\ar[r]_f&B'
}
$$
\end{notation}
\begin{dfn}
An {\em Ehresmann topology} on an ordered groupoid $\calG$ is given by an assignment of a collection $T(A)$ of vertical sieves to each object $A$, such that:
\begin{itemize}
\item{(ET.1)}
The trivial sieve $(\downarrow A)\in T(A)$.
\item{(ET.2)}
If $\calB\in T(B)$ and $f\colon A\to B'$ with $\xymatrix@1{B'\ar[r]|-{\scriptscriptstyle\bullet}& B}$, then $f^*\calB\in T(A)$.
\item{(ET.3)}
Let $\calA\in T(A)$ and let $\calB$ be any vertical sieve on $A$.
If for each $$\xymatrix@C=3em@R=1.5em{C\ar[r]^{f}&A'\ar[d]|-{\scriptscriptstyle\bullet}\\&A}$$ with $\left(\xymatrix@1{A'\ar[r]|-{\scriptscriptstyle\bullet}&A}\right)\,\in\calA$,
$f^*\calB\in T(C)$, then $\calB\in T(A)$.
\end{itemize}
\end{dfn}

Lawson and Steinberg show that Grothendieck topologies on a left-cancellative category $\mathcal C$ are in one-to-one correspondence with Ehresmann topologies on $\mathbf{G}(\calC)$ and conversely, that Ehresmann topologies on
an ordered groupoid $\calG$ are in one-to-one correspondence with Grothendieck topologies on $\mathbf{L}(\calG)$. We summarize the correspondence in our notation.

Given an Ehresmann topology $T$ on an ordered groupoid $\calG$, the corresponding Grothendieck topology $J_T$ on $\mathbf{L}(\calG)$ is given by
$$\{\xymatrix{B_i\ar[r]^{m_i}&A_i'\ar[r]|-{\scriptscriptstyle\bullet}&A}|i\in I\}\in J_T(A)\mbox{ if and only if }
\{\xymatrix{A_i'\ar[r]|-{\scriptscriptstyle\bullet}&A}|i\in I\}\in T(A).
$$
Note that, given a Grothendieck topology $J$ on $\mathbf{L}(\calG)$, the corresponding Ehresmann topology $T_J$ on $\calG$ can be recovered as follows:
$T_J(A)$ consists of those vertical sieves $\calA$ such that
$$\{\xymatrix@1{A'\ar[r]^{1_{A'}}&A'\ar[r]|-{\scriptscriptstyle\bullet}&A}\,|\,\xymatrix@1{A'\ar[r]|-{\scriptscriptstyle\bullet}&A} \mbox{ in }\calA \}\in J(A).$$

Given a Grothendieck topology $J$ on a left-cancellative category $\calC$, the corresponding Ehresmann topology $T_J$ on $\mathbf{G}(\calC)$ is given by
$$T_J([m\colon A\to B])=\{[mS];S\in J(A)\},\mbox{ where }[mS]=\{\xymatrix{[mn]\ar[r]|-{\scriptscriptstyle\bullet}&[m]}|\,n\in S\}.$$
Furthermore, given an Ehresmann topology $T$ on $\mathbf{G}(\calC)$, the corresponding Grothendieck topology $J_T$ is defined by:
$$
\{m_i\colon A_i\to A|\,i\in I\}\in J_T(A)\mbox{ if and only if }\{\xymatrix@1{[m_i]\ar[r]|-{\scriptscriptstyle\bullet}&[1_A]}|\,i\in I\}\in T([1_A]).
$$

\begin{proposition}
\label{prop:sheaves-G-and-LG-and-C-and-GC}
\begin{enumerate}
  \item
  \label{toshow:shG-equiv-shLC}
  (Lawson and Steinberg)
  The category of sheaves on an Ehresmann site \((\calG, T)\)
  is equivalent to the category of sheaves on \((\bL(\calG), J_T)\).
  \item
  \label{toshow:shC-equiv-shGC}
  The category of sheaves on a left-cancellative site \((\calC, J)\)
  is equivalent to the category of sheaves on \((\bG(\calC), T_J)\).
\end{enumerate}
\end{proposition}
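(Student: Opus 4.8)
The plan is to reduce both statements to the already-established comparisons of presheaf categories, together with the explicit dictionary between Ehresmann and Grothendieck topologies recorded above, by checking in each case that the relevant (iso)equivalence of presheaf categories restricts to the full subcategories of sheaves.

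For the first part I would start from the isomorphism $\mathbf{PreSh}(\calG)\cong\mathbf{PreSh}(\bL(\calG))$ of (\ref{LS-presheaves}) and show that it carries $T$-sheaves to $J_T$-sheaves and back. Concretely, a covering family $\{\,\xymatrix@1{B_i\ar[r]^{m_i}&A_i'\ar[r]|-{\scriptscriptstyle\bullet}&A}\,\}\in J_T(A)$ is, by the very definition of $J_T$, assembled from the covering vertical sieve $\{\,\xymatrix@1{A_i'\ar[r]|-{\scriptscriptstyle\bullet}&A}\,\}\in T(A)$ by postcomposing each member with a horizontal arrow of $\calG$. Since every horizontal arrow is invertible, the matching-family and amalgamation data for such a $J_T$-cover are in bijection with the corresponding data for the vertical sieve in $T(A)$. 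Hence a presheaf satisfies the $J_T$-sheaf condition at $A$ precisely when its transform satisfies the $T$-sheaf condition at $A$, so (\ref{LS-presheaves}) restricts to an equivalence $\sh(\calG,T)\simeq\sh(\bL(\calG),J_T)$, recovering the result of Lawson and Steinberg \cite{lawson2004}.

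For the second part I would avoid a direct construction and instead combine the first part with the equivalence already available on the left-cancellative side. Applying the first part to the ordered groupoid $\bG(\calC)$ equipped with the Ehresmann topology $T_J$ yields
$$\sh(\bG(\calC),T_J)\simeq\sh(\bL\bG(\calC),J_{T_J}).$$
On the other hand, Lawson and Steinberg's one-to-one correspondence between the covering sieves of $(\calC,J)$ and those of $(\bL\bG(\calC),J_{T_J})$ gives
$$\sh(\calC,J)\simeq\sh(\bL\bG(\calC),J_{T_J}).$$
Composing these two equivalences produces $\sh(\calC,J)\simeq\sh(\bG(\calC),T_J)$. The only compatibility to verify is that the Grothendieck topology $J_{T_J}$ obtained from the chain $J\mapsto T_J\mapsto J_{T_J}$ really is the one appearing in the Lawson--Steinberg comparison; this is immediate from the definitions of the two topology correspondences recorded above.

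The main obstacle is the first part: one must check that the presheaf isomorphism (\ref{LS-presheaves}) genuinely restricts to sheaves, i.e., that the sheaf conditions match under the bijection $T(A)\leftrightarrow J_T(A)$. The delicate point is tracking how matching families and their unique amalgamations transform under (\ref{LS-presheaves}); here the invertibility of the horizontal arrows of $\calG$ is exactly what collapses the $J_T$-descent condition over a covering family onto the $T$-descent condition over the underlying covering vertical sieve. Once this is in place, the second part is pure assembly of equivalences.
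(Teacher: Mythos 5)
Your proposal is correct, but it takes a genuinely different route from the paper for part (2). For part (1) the paper simply cites Lawson and Steinberg \cite[Theorem 4.4]{lawson2004}, so your sketch of why the presheaf isomorphism (\ref{LS-presheaves}) restricts to sheaves (exploiting the invertibility of the horizontal arrows to collapse the $J_T$-descent data onto the $T$-descent data) is extra work but harmless and in the spirit of their argument. For part (2) the paper does \emph{not} assemble existing equivalences as you do; it takes the explicit pair of functors $\tilde{(-)}$ and $\check{(-)}$ constructed in Proposition \ref{prop:presheaves-c-gc} and verifies directly, by translating matching families and amalgamations back and forth between a $T_J$-cover of $[h]$ and the corresponding $J$-cover of $\bar{A}_h$, that each functor carries sheaves to sheaves. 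Your route --- applying part (1) to $(\bG(\calC),T_J)$ and composing with the Lawson--Steinberg equivalence $\sh(\calC,J)\simeq\sh(\bL\bG(\calC),J_{T_J})$ --- is shorter and is legitimate for the proposition as literally stated, provided (as you note) one checks that the topology $J_{T_J}$ produced by the chain $J\mapsto T_J\mapsto J_{T_J}$ agrees with the one in the Lawson--Steinberg comparison, which it does by inspection of the two dictionaries. What the paper's direct verification buys is slightly more: it establishes that the \emph{specific} presheaf equivalence $\mathbf{PreSh}(\calC)\simeq\mathbf{PreSh}(\bG(\calC))$ of Proposition \ref{prop:presheaves-c-gc} restricts to sheaves, which keeps the sheaf-level equivalence compatible with the naturality statements of Remark \ref{D:arrowcorrespondence} that are used later (e.g.\ in Theorem \ref{geom}); your composite equivalence would require an additional comparison of functors to recover that compatibility.
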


\begin{proof}
We want to show that the maps involved in the equivalences listed at the beginning of Section \ref{D:presheaves} send sheaves to sheaves.
For part \ref{toshow:shG-equiv-shLC}, this was established by Lawson and Steinberg in \cite[Theorem 4.4]{lawson2004}.

To prove part \ref{toshow:shC-equiv-shGC},
recall the functors given in the proof of Proposition \ref{prop:presheaves-c-gc}
giving an equivalence of presheaf categories
$$
\mathbf{PreSh}(\calC)\simeq \mathbf{PreSh}(\bG(\calC)).
$$
We need to show that if $\Phi$ is a sheaf, so is $\tilde\Phi$ and if $\Psi$
is a sheaf, so is $\check\Psi$.

So assume that $\Phi$ is a sheaf on the site $(\calC,J)$.
We want to show that $\tilde\Phi$ is a sheaf on the Ehresmann site
$(\bG(\calC),T_J)$. So let $\{\xymatrix@1{[h_i\colon A_i\to B]\ar[r]|{\scriptscriptstyle\bullet}&[h\colon A\to B]}|\,i\in I\}\in T_J([h])$ and let
$\varphi_i\in \tilde\Phi([h_i])$ with $i\in I$ be a matching family.
Then for each index $i$, $\varphi_i\in\Phi(\bar{A}_{i,h_i})$ and there is an arrow $k_i\colon \bar{A}_{i,h_i}\to\bar{A}_h$ that makes the following triangle commute,
$$
\xymatrix{
\bar{A}_{i,h_i}\ar[dr]_{\bar{h}_i}\ar[rr]^{k_i}&&\bar{A}_h\ar[dl]^{\bar{h}}
\\
&B
}
$$
Then it follows that $\{k_i|\,i\in I\}\in J(\bar{A}_h)$ and the $\varphi_i$ form a matching family for $\Phi$ for this cover. Since $\Phi$ is a sheaf, there is a unique amalgamation $\overline{\varphi}\in \Phi(\bar{A}_h)=\tilde\Phi([h])$.  This provides the required amalgamation of the original family. The fact that it is unique follows from the fact that any other amalgamation in $\tilde\Phi([h])$ would correspond to an amalgamation of the $\varphi_i$ as matching family in $\Phi$ and we have uniqueness there. We conclude that $\tilde\Phi$ is a sheaf.

Now let $\Psi$ be a sheaf on the Ehresmann site   $(\bG(\calC),T_J)$.
We want to show that $\check\Psi$ is a sheaf on the Grothendieck site $(\calC,J)$.
So let $\{\xymatrix@1{A_i\ar[r]^{f_i}&A}|\,i\in I\}\in J(A)$ and let
$\psi_i\in\check\Psi(A_i)=\Psi([1_{A_i}])$ for $i\in I$ be a matching family.
Then $\{\xymatrix@1{[f_i]\ar[r]|{\scriptscriptstyle\bullet}&[1_A]}\}\in T_J([1_A])$
and for each index $i\in I$ there is a horizontal arrow $\xymatrix@1@C=4em{[f_i]\ar[r]^{[f_i,1_{A_i}]}&[1_{A_i}]}$. Then let $\psi'_i=\Psi([f_i,1_{A_i}])(\psi_i)\in\Psi([f_i])$.
These form a matching family for the cover  $\{\xymatrix@1{[f_i]\ar[r]|{\scriptscriptstyle\bullet}&[1_A]}\}$ and since $\Psi$ is a sheaf, there is a unique amalgamation $\overline{\psi}\in \Psi([1_A])=\check\Psi(A)$.
This also an amalgamation for the original matching family $\psi_i$.
Uniqueness follows from the fact that amalgamations for the $\psi'_i$ in $\Psi$ correspond precisely to amalgamations for the $\psi_i$ in $\check\Psi$.

We conclude that the equivalence of presheaf categories $\mathbf{PreSh}(\calC)\simeq\mathbf{PreSh}(\bG(\calC))$ restricts to an equivalence of sheaf categories $\sh(\calC,J)\simeq\sh(\bG(\calC),T_J)$.
 \end{proof}

\subsection{Functors Between Categories of Sites}
\label{sec:functors-between-categories-of-sites}

Let $(\calC,J)$ and $(\calC',J')$ be two Grothendieck sites.
A functor $F\colon\calC\to\calC'$ induces a geometric morphism $\varphi_F\colon\sh(\calC',J')\to\sh(\calC,J)$ (with $(\varphi_F)^*$ given
by composition with $F$) if and only if $F$ is both covering preserving and covering flat \cite{johnstone2002, Karazeris2004}.
We recall the definition of these concepts.

\begin{dfn}
For Grothendieck sites $(\calC,J)$ and $(\calC',J')$ a functor $F\colon\calC\to\calC'$
is
\begin{enumerate}
\item
{\em covering preserving} if for any covering sieve $\calA\in J(A)$, its image is again a covering sieve; i.e., $F\calA\in J'(FA)$;
\item
{\em covering flat} if for each finite diagram $D\colon \calI\to\calC$ and any cone $T$ over $F\circ D$ in $\calC'$ with vertex $U$, the sieve
$$
\{h\colon V\to U\,|\, Th\mbox{ factors through the $F$-image of some cone over }D\}
$$
is a covering sieve in $\calC'$.
\end{enumerate}
Such a functor is called a {\em morphism between Grothendieck sites}.
\end{dfn}

\begin{rmk}
If the sites have all finite limits we could require the functors between them to just preserve those limits. However, our functors $\mathbf L$ and $\mathbf G$ don't preserve
this property (for instance, when $\calC$ is a site with all finite limits, ${\mathbf G}(\calC)$ does not necessarily have products in its vertical category), so in this case it makes more sense to work with covering-flat morphisms.
\end{rmk}

We want to use the results from Remark \ref{D:arrowcorrespondence} and
Proposition \ref{prop:sheaves-G-and-LG-and-C-and-GC} to introduce the corresponding concepts for double functors between ordered groupoids to characterize the morphisms between Ehresmann sites that give rise to geometric morphisms between the induced sheaf toposes. These will then be called  {\em morphisms of Ehresmann sites}.

We need the notion of a cone over a diagram in a double category. The relevant notion for ordered groupoids is as follows.

\begin{dfn}
\begin{enumerate}
\item  A {\em finite diagram} in a double category ${\calG}$ consists of a finite ordered groupoid ${\mathbb{I}}$ and a double functor $D\colon {\mathbb{I}}\to\calG$.
We write $D_i=D(i)$ for any object $i$ in ${\mathbb I}$, and $\xymatrix@1{D_i\ar[r]^{D_\alpha}&D_{i'}}$ for the image of a horizontal arrow
$\xymatrix@1{i\ar[r]^\alpha &i'}$ under $D$
and $\xymatrix@1{D_i\ar[r]^{D_{(i,i')}}|-{\scriptscriptstyle\bullet}&D_{i'}}$ for the image of a vertical arrow $\xymatrix@1{i\ar[r]|-{\scriptscriptstyle\bullet}&i'}$ under $D$.
\item An {\em hv-cone} over a diagram $D\colon {\mathbb I}\to\calG$ consists of an object $U$ and a family of arrows
$$
\xymatrix{U\ar[r]^{\xi_i}&E_i\ar[d]|-{\scriptscriptstyle\bullet}\\&D_i}
$$
for each $i\in I$ such that for each horizontal arrow $\xymatrix@1{i\ar[r]^\alpha&i'}$ in $\mathbb I$, the following triangle of horizontal arrows exists and commutes:
$$
\xymatrix{
&U\ar[dl]_{\xi_i}\ar[dr]^{\xi_{i'}}
\\
E_i\ar[rr]_{D_\alpha|_{E_i}}&&E_{i'}}
$$
and for each vertical arrow $\xymatrix@1{i\ar[r]|-{\scriptscriptstyle\bullet}&i'}$, $\xi_i=\xi_{i'}$.
\end{enumerate}
\end{dfn}

With this terminology in place we can define the notion of being covering flat for maps between Ehresmann sites as in the following definition.

\begin{dfn}
A {\em morphism of Ehresmann sites} $(\calG,T)\to(\calG',T')$ is a double functor
$\calG\to\calG'$ which satisfies the following two conditions:
\begin{itemize}
\item
It is {\em covering preserving}: If $\calA \in T(A)$ then $F\calA \in T'(FA)$.
\item
It is {\em covering flat}: For each finite diagram, $D\colon {\mathbb I}\to\calG$  and each hv-cone
$$
\xymatrix@R=1em{
U\ar[r]^{\xi_i}
&E_i \ar[d]|-{\scriptscriptstyle\bullet}\\ &FD_i}\mbox{ with }i\in I,
$$
over $FD$ in $\calG'$,
there is a covering sieve
$$
\left\{\xymatrix@1{U'_{k}\ar[r]|-{\scriptscriptstyle\bullet}&U}|k\in K\right\}\in T(U)
$$
such that for each $k\in K$ there is an hv-cone
$$\xymatrix@R=1em{T_k\ar[r]^{\theta_{ik}}&A_{ik}\ar[d]|-{\scriptscriptstyle\bullet}\\&D_i} \quad\mbox{ in }\calG$$
and a diagram
$$\xymatrix{U_k'\ar[r]^{h_k}&T_k'\ar[d]|-{\scriptscriptstyle\bullet}\\&FT_k}$$
such that
$$
\xymatrix@R=1em{
U_k'\ar[r]^{\xi_i|_{U_k'}}& E_i'\ar[d]|-{\scriptscriptstyle\bullet}\ar@{}[dr]|\equiv &
U_k'\ar[r]^{h_k}&T_k' \ar[r]^{F\theta_i|_{T_k'}} & E_i'\ar[d]|-{\scriptscriptstyle\bullet}
\\
&E_i\ar[d]|-{\scriptscriptstyle\bullet}&&&FA_{ik}\ar[d]|-{\scriptscriptstyle\bullet}
\\
&FD_i&&&FD_i
}
$$
for all objects $i$ in $\mathbb I$.
\end{itemize}
\end{dfn}

\begin{proposition}\label{cp}
Given left-cancellative Grothendieck sites $(\calC,J)$ and $(\calC',J')$, a functor $F\colon \calC\to\calC'$ is covering preserving if and only if its image $\mathbf{G}(F)$ is covering preserving as a morphism of Ehresmann sites
$(\mathbf{G}(\calC),T_J)\to(\mathbf{G}(\calC'),T_{J'})$.
\end{proposition}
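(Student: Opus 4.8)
The plan is to reduce both implications to a single bookkeeping computation describing how $\bG(F)$ transports the generating covering sieves of $T_J$. Recall that the covering vertical sieves in $T_J$ on an object $[m\colon A\to B]$ of $\bG(\calC)$ are exactly those of the form $[mS]=\{[mn]\le[m]\mid n\in S\}$ with $S\in J(A)$, and that $\bG(F)$ sends $[m\colon A\to B]$ to $[Fm\colon FA\to FB]$ and the vertical arrow $[mn]\le[m]$ (witnessed by $n$) to $[Fm\cdot Fn]\le[Fm]$ (witnessed by $Fn$). First I would fix an object $[m]$ and a sieve $S\in J(A)$ and compute the vertical sieve generated by the image $\bG(F)[mS]$ inside ${\downarrow}[Fm]$, writing $\langle FS\rangle$ for the sieve in $\calC'$ generated by $\{Fn\mid n\in S\}$.

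The key step is the identity: the vertical sieve generated by $\bG(F)[mS]$ equals $[Fm\cdot\langle FS\rangle]$. To see this I would unwind the order on subobjects: a subobject $[q]\le[Fm]$ lies in the downward closure of $\{[Fm\cdot Fn]\mid n\in S\}$ if and only if $q=Fm\cdot Fn\cdot h$ for some $n\in S$ and some arrow $h$, that is, if and only if $[q]=[Fm\cdot t]$ with $t=Fn\cdot h\in\langle FS\rangle$. By the defining formula $T_{J'}([Fm\colon FA\to FB])=\{[Fm\cdot S']\mid S'\in J'(FA)\}$, this shows that $\bG(F)[mS]$ generates a $T_{J'}$-covering sieve on $[Fm]$ if and only if $\langle FS\rangle\in J'(FA)$.

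Granting this equivalence, both directions follow by quantification. For the forward direction, if $F$ is covering preserving then $\langle FS\rangle\in J'(FA)$ for every $A$ and every $S\in J(A)$; since every $T_J$-covering sieve has the form $[mS]$, the displayed equivalence shows $\bG(F)$ is covering preserving. For the converse I would specialise to the objects $[1_A]$: here $[1_A S]=\{[n]\le[1_A]\mid n\in S\}\in T_J([1_A])$ for each $S\in J(A)$, so covering preservation of $\bG(F)$ forces $\bG(F)[1_A S]$ to generate a covering sieve, and the equivalence yields $\langle FS\rangle\in J'(FA)$, which is exactly covering preservation of $F$. Since $\mathrm{dom}[m\colon A\to B]=A$ already ranges over all objects of $\calC$, the two universally quantified conditions coincide and the proposition follows.

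The main obstacle---indeed the only nontrivial point---is the bookkeeping in the key identity: matching the downward closure of the image sieve with $[Fm\cdot\langle FS\rangle]$ and checking that $T_{J'}$-membership is detected precisely by $J'$-membership of $\langle FS\rangle$ at the object $FA$. I would also take care over the convention that ``$F\calA$'' and ``$\bG(F)\calB$'' denote the sieves \emph{generated by} the set-theoretic images, since neither image is downward closed in general; once this convention is fixed, the argument is a direct translation through the Lawson--Steinberg correspondence between $J$ and $T_J$, and the covering-flat condition plays no role here.
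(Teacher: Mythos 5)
Your proof is correct and takes essentially the same route as the paper's: both directions reduce to the Lawson--Steinberg correspondence identifying a covering vertical sieve on $[m\colon A\to B]$ with a covering sieve in $J(A)$ via the witnesses $m_i=mf_i$, with the converse obtained by specialising to $[1_A]$. Your explicit ``key identity'' and the remark about taking \emph{generated} sieves just make precise a step the paper's proof leaves implicit; the substance is identical.
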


\begin{proof}
Suppose first that $F$ is covering preserving and let
\(
\{
  \xymatrix{
  [m_i] \ar[r]|{\bullet} & [m]
  }
\}_{i\in I}
\)
be a vertical covering sieve of $[m\colon A\rightarrow B]$
in $\mathbf{G}(\mathcal{C}).$
This provides a covering sieve
$\{f_i\colon A_i\rightarrow A\}_{i\in I}$
of $A$ in $\mathcal{C}$ such that, for all $i\in I,$
$m_i = mf_i.$
Since $F$ is a covering-preserving functor,
the $F(f_i)$ cover $F(A)$  and
$F(m_i) = F(mf_i) = F(m)F(f_i)$
in $\mathcal{C'}$ for all $i\in I.$
Therefore, the data
\(
\{
  \xymatrix{
  [F(m_i)] \ar[r]|{\bullet} & [F(m)]
  }
\}_{i\in I}
\)
are a covering vertical sieve of $[F(m)]$ in $\mathbf{G}(\mathcal{C'});$
$\mathbf{G}(F)$ is covering preserving.

Conversely, suppose that $\mathbf{G}(F)$ is covering preserving and
let $\{m_i\colon A_i\rightarrow A\}_{i\in I}$ be a covering sieve of $A$ in
$\mathcal{C}.$
Then
\(
\{
  \xymatrix{
  [m_i] \ar[r]|{\bullet} & [1_A]
  }
\}_{i\in I}
\)
is a covering vertical sieve of $[1_A]$ in $\mathbf{\mathcal{C}}$ and,
since $\mathbf{G}(F)$ is covering preserving,
\(
\{
  \xymatrix{
  [F(m_i)] \ar[r]|{\bullet} & [F(1_A)]
  }
\}_{i\in I}
\)
is a covering vertical sieve of $[F(1_A)]$ in $\mathbf{G}(\mathcal{C}').$
Therefore, the $F(m_i)$ are a covering sieve of $F(A)$ in $\mathcal{C}';$
$F$ is covering preserving.
\end{proof}

\begin{proposition}\label{G-cf}
Given left-cancellative Grothendieck sites $(\calC,J)$ and $(\calC',J')$, if a functor $F\colon \calC\to\calC'$ is covering flat then its image $\mathbf{G}(F)$ is covering flat as a morphism of Ehresmann sites,
$(\mathbf{G}(\calC),T_J)\to(\mathbf{G}(\calC'),T_{J'})$.
\end{proposition}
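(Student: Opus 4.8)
The plan is to mirror the proof of Proposition \ref{cp}, but now transporting the double-categorical data back and forth across Lawson's correspondence $\mathbf{L}\bG(\calC')\simeq\calC'$ and invoking the covering flatness of $F$ at the level of $\calC'$. First I would turn a finite diagram $D\colon\mathbb{I}\to\bG(\calC)$ into an ordinary finite diagram $\hat D\colon\calI\to\calC$. For each object $i$ of the finite ordered groupoid $\mathbb{I}$, choose a canonical representative $[\bar m_i\colon\bar A_i\to B_i]$ of the subobject $D_i$ and set $\hat D(i)=\bar A_i$. Since the horizontal arrows $D_\alpha$ are (invertible) spans and the vertical arrows $D_{(i,i')}$ are subobject inclusions, Lawson's correspondence sends them to isomorphisms and to the unique factoring maps $\bar A_i\to\bar A_{i'}$ in $\calC$, respectively; letting $\calI$ be the finite category generated by the horizontal and vertical arrows of $\mathbb{I}$ (the double cells of $\mathbb{I}$ becoming commuting squares in $\calC$), this produces a genuine finite diagram $\hat D$ in $\calC$.

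Next I would translate an hv-cone over $\bG(F)D$ in $\bG(\calC')$ into an ordinary cone over $F\hat D$ in $\calC'$. Writing the vertex as $U=[u\colon V\to W]$, the $i$-th leg consisting of the horizontal arrow $\xi_i\colon U\to E_i$ followed by the vertical arrow $E_i\to\bG(F)D_i$ is an arrow of $\mathbf{L}\bG(\calC')$ from $[u]$ to $[F\bar m_i]$; under $\mathbf{L}\bG(\calC')\simeq\calC'$ it corresponds to a map $\tau_i\colon V\to F\bar A_i$. The commuting triangles of the hv-cone (one for each horizontal arrow of $\mathbb{I}$) and the equalities $\xi_i=\xi_{i'}$ (for the vertical arrows) become exactly the conditions making $(\tau_i)_i$ a cone over $F\hat D$ with vertex $V$.

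I would then apply the hypothesis that $F$ is covering flat to this cone: the sieve $S=\{h\colon V'\to V\mid (\tau_i\circ h)_i\text{ factors through the }F\text{-image of some cone over }\hat D\}$ lies in $J'(V)$. Using the description of $T_{J'}$ on $\bG(\calC')$, namely $T_{J'}([u])=\{[uS]:S\in J'(V)\}$, the family $[uS]=\{[u h]\xrightarrow{\bullet}[u]\mid h\in S\}$ is a vertical covering sieve in $T_{J'}(U)$, and I would take the objects $U_k'=[u h_k]$ with $h_k\in S$ as the required cover of $U$, exactly as $\bG(F)$'s covering-flatness condition demands.

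The final, and hardest, step is to turn each witness of membership $h_k\in S$ into the hv-cone data prescribed by the definition of covering flat for $\bG(F)$. A cone over $\hat D$ in $\calC$ through which $(\tau_i\circ h_k)_i$ factors must be read back, via Lawson's correspondence, as an hv-cone $\theta_{\bullet k}$ over $D$ in $\bG(\calC)$, and the $F$-image factorization must be repackaged as the diagram $U_k'\xrightarrow{h_k}T_k'\to FT_k$ together with the double-cell identity $\xi_i|_{U_k'}\equiv(F\theta_{ik}|_{T_k'})\circ h_k$ over $FD_i$, for each $i$. The obstacle is precisely this bookkeeping: the $\calC'$-level factorization is plain, whereas the target identity carries restrictions (the liftings $\xi_i|_{U_k'}$ and $F\theta_{ik}|_{T_k'}$) and vertical legs into $D_i$ and $FD_i$ that must be reconstructed from the bare $\calC'$-factorization and then shown to cohere as genuine double cells. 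Once these restrictions are computed using the fibration (lifting) property of the ordered groupoids and the uniqueness of restrictions, the required identity follows, and I would conclude that $\bG(F)$ is covering flat.
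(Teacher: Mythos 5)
Your proposal is correct and follows essentially the same route as the paper's proof: pass from $D$ to the adjunct diagram $\hat D$ in $\calC$ (the paper uses $\hat D\colon\mathbf{L}(\mathbb{I})\to\calC$ with $\hat D_i=D_i'$), convert the hv-cone into an ordinary cone over $F\hat D$ with vertex the domain of the representative of $U$, apply covering flatness of $F$ to get a sieve in $J'$, and transport it to a vertical covering sieve via $[uS]\in T_{J'}([u])$. The final ``bookkeeping'' step you flag is exactly what the paper carries out with its two explicit diagrams, using uniqueness of restrictions and the identity $e_i\xi_i\varphi_k=F(d_i\theta_{ik})\psi_k$, so your outline is complete in substance.
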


\begin{proof}
Let $F\colon (\calC,J)\to (\calC',J')$ be covering flat.
We want to show that $\mathbf{G}(F)\colon (\mathbf{G}(\calC),T_J)\to(\mathbf{G}(\calC'), T_{J'})$ is covering flat as a map of Ehresmann sites.
So let $D\colon {\mathbb I}\to\mathbf{G}(\calC)$ be a finite diagram.
Note that in this case each $D_i$ has the form $[\xymatrix@1{D_i'\ar[r]^{d_i}&D_i}]$ where $\xymatrix@1{D_i'\ar[r]^{d_i}&D_i}$ is an arrow in $\calC$.
Now let
$$\xymatrix@C=5em@R=1.5em{
[U'\stackrel{u}{\longrightarrow}U]\ar[r]^{\xi_i}&[E_i'\stackrel{e_i}{\longrightarrow}E_i]\ar[d]|-{\scriptscriptstyle\bullet}
\\
&[FD_i'\stackrel{Fd_i}{\longrightarrow}FD_i]
}
$$
be an hv-cone over the diagram $\mathbf{G}(F)\circ D\colon{\mathbb I}\to \mathbf{G}(\calC')$.
This means that $E_i=FD_i$ and there is an arrow $E_i'\stackrel{v_i}{\longrightarrow} FD_i$ in $\calC'$ such that $Fd_i\circ v_i=e_i$ and such that $\xymatrix@1{U'\ar[r]^{v_i\xi_i}&FD_i}$ is a cone in $\calC'$ over the diagram $F\hat{D}\colon\mathbf{L}(\mathbb{I})\to\calC'$, where $\hat{D}$ is the adjunct of $D$.
In particular, if $D_i$ has the form $[D_i'\stackrel{d_i}{\longrightarrow}D_i]$ then $\hat{D}_i=D_i'$.

Since $F$ is covering flat, there is a Grothendieck covering $\{U'_k\stackrel{\varphi_k}{\longrightarrow}U'|\,k\in K\}\in J(U')$ such that $v_i\xi_i\varphi_k=F(\theta_{ik})\psi_k$ where $\psi_k\colon U_k'\rightarrow FT_k$ for some cone $T_k\stackrel{\theta_{ik}}{\longrightarrow}D_i'$ over $\hat{D}$ in $\calC$.

This gives rise to an Ehresmann covering $\{[\xymatrix@1{U_k'\ar[r]^{u\varphi_k}&U}]\xymatrix@1{\ar[r]|-{\scriptscriptstyle\bullet}&}[\xymatrix@1{U'\ar[r]^u&U}]|\,k\in K\}\in T_{J'}$ with the following diagrams in $\mathbf{G}(\calC')$:
$$
\xymatrix@C=4em{
[U_k'\stackrel{u\varphi_k}{\longrightarrow}U]\ar[r]^-{[1_{U_k'}]}&[U_k'\stackrel{\psi_k}{\longrightarrow}FT_k]\ar@{}[dr]|\le \ar[d]|-{\scriptscriptstyle\bullet}\ar[r]^-{[1_{U_k'}]} & [U_k'\stackrel{F(d_i\theta_{ik})\psi_k}{\longrightarrow}FD_i]\ar[d]|-{\scriptscriptstyle\bullet}
\\
&[FT_k\stackrel{1_{FT_k}}{\longrightarrow}FT_k]\ar[r]_-{[1_{FT_k}]}&[FT_k\stackrel{F(d_i\theta_{ik})}{\longrightarrow}FD_i]\ar[d]|-{\scriptscriptstyle\bullet}\\
&&[FD_i'\stackrel{Fd_i}{\longrightarrow}FD_i]
}
$$
and
$$
\xymatrix@C=4em{
[U_k'\stackrel{u\varphi_k}{\longrightarrow}U]\ar[d]|-{\scriptscriptstyle\bullet}\ar[r]^-{[1_{U_k'}]}\ar@{}[dr]|\le & [U_k'\stackrel{e_i\xi_i\varphi_k}{\longrightarrow}FD_i]\ar[d]|-{\scriptscriptstyle\bullet}
\\
[U'\stackrel{u}{\longrightarrow}U]\ar[r]_-{[\xi_i]}&[E_i'\stackrel{e_i}{\longrightarrow}FD_i]\ar[d]|-{\scriptscriptstyle\bullet}
\\
&[FD_i'\stackrel{Fd_i}{\longrightarrow}FD_i]
}
$$
Note that this means that $[\xi_i]|_{[U_k'\stackrel{u\varphi_k}{\longrightarrow}U]}=[1_{U_k'}]$ as in this last diagram. Furthermore, $e_i\xi_i\varphi_k=F(d_i)v_i\xi_i\varphi_k=F(d_i\theta_{ik})\psi_k$, so we have that the horizontal arrows on the tops of these diagrams are equal as required.

We conclude that $\mathbf{G}$ is covering flat.
\end{proof}

\begin{proposition}\label{L-cf}
Given Ehresmann sites $(\calG,T)$ and $(\calG',T')$, if the double functor $M\colon \calG\to\calG'$ is covering flat then its image $\mathbf{L}(M)$ is covering flat as a morphism of Grothendieck sites $(\mathbf{L}(\calG),J_T)\to (\mathbf{L}(\calG'),J_{T'})$.
\end{proposition}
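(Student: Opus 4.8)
The plan is to mirror the proof of Proposition~\ref{G-cf} in the opposite direction: translate the Grothendieck-site data on $\mathbf{L}(\calG)$ into ordered-groupoid data on $\calG$, apply the covering flatness of $M$, and translate the resulting covering back. Concretely, I would start from a finite diagram $D\colon\calI\to\mathbf{L}(\calG)$ together with a cone $T$ over $\mathbf{L}(M)\circ D$ in $\mathbf{L}(\calG')$ with vertex $U$, and I must show that the sieve $S$ consisting of those $h\colon V\to U$ for which $Th$ factors through the $\mathbf{L}(M)$-image of some cone over $D$ belongs to $J_{T'}(U)$.

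The first step is to pass from $D$ to a finite diagram $\check{D}$ in the ordered groupoid $\calG$. Using the adjunction $\mathbf{G}\dashv\mathbf{L}$ (unit $\eta$, counit $\kappa$) and the counit component $\kappa_\calG\colon\mathbf{G}\mathbf{L}(\calG)\to\calG$, a diagram $D\colon\calI\to\mathbf{L}(\calG)$ with $\calI$ finite and left-cancellative transposes to the double functor $\check{D}=\kappa_\calG\circ\mathbf{G}(D)\colon\mathbf{G}(\calI)\to\calG$, where $\mathbf{G}(\calI)$ is a finite ordered groupoid; restricting to such $\calI$ suffices, since the finite-limit shapes generating covering flatness (discrete pairs, parallel pairs and cospans) are free on finite graphs and hence left-cancellative. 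The crucial observation is that, because every arrow of $\mathbf{L}(\calG')$ factors canonically as a horizontal arrow followed by a vertical arrow, each leg $T_i\colon U\to M(D_i)$ of the cone splits as $U\stackrel{\xi_i}{\to}E_i$ with a vertical arrow $E_i\to M(D_i)$; I would verify that the family $\{\xi_i\}$ assembles into an hv-cone over $M\circ\check{D}$ in $\calG'$, and dually that hv-cones over $\check{D}$ correspond to cones over $D$.

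Next I would apply the covering flatness of $M$ to this hv-cone, obtaining a vertical covering sieve $\{U_k'\to U\}_{k\in K}\in T'(U)$ together with, for each $k$, an hv-cone $\{\theta_{ik}\colon T_k\to A_{ik}\}$ over $\check{D}$ in $\calG$ and factorization data $h_k\colon U_k'\to T_k'$ exhibiting $\xi_i|_{U_k'}$ as $(M\theta_{ik}|_{T_k'})\circ h_k$. To translate back, I would use the correspondence between Ehresmann topologies $T'$ and Grothendieck topologies $J_{T'}$: the vertical sieve $\{U_k'\to U\}\in T'(U)$ is exactly the statement that the sieve on $U$ generated by the arrows $U_k'\to U$, viewed as morphisms of $\mathbf{L}(\calG')$, lies in $J_{T'}(U)$. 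Each hv-cone $\{\theta_{ik}\}$ over $\check{D}$ corresponds to a cone over $D$ in $\mathbf{L}(\calG)$, and the factorization $\xi_i|_{U_k'}=(M\theta_{ik}|_{T_k'})\circ h_k$ says precisely that the restriction of $T$ along $U_k'\to U$ factors through the $\mathbf{L}(M)$-image of that cone. Hence each generator $U_k'\to U$ lies in $S$, so $S$ contains a $J_{T'}$-covering sieve and is therefore itself covering.

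The main obstacle is the dictionary invoked in the second paragraph: making precise the correspondence between finite diagrams and cones in the left-cancellative category $\mathbf{L}(\calG)$ and finite hv-diagrams and hv-cones in the ordered groupoid $\calG$. The subtlety is that a morphism of $\mathbf{L}(\calG)$ is a formal composite of a horizontal and a vertical arrow, so $D$ must be ``unfolded'' (this is what passing through $\mathbf{G}(\calI)$ and $\kappa_\calG$ accomplishes, at the cost of reindexing by the subobjects of $\calI$), and one must check both that the legs $\xi_i$ extracted from $T$ are genuinely compatible over $M\circ\check{D}$ and that the factorization produced by the flatness of $M$ really does witness factorization through an $\mathbf{L}(M)$-image of a cone over $D$. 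A secondary point to pin down is the reduction from arbitrary finite index categories to left-cancellative ones. Once this correspondence is settled, the covering-preservation bookkeeping through the $T'\leftrightarrow J_{T'}$ translation is routine.
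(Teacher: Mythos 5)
Your proposal is correct and follows essentially the same route as the paper's proof: transpose the diagram along the adjunction $\mathbf{G}\dashv\mathbf{L}$ to a finite diagram in $\calG$, split each cone leg into a horizontal arrow followed by a vertical arrow to obtain an hv-cone over $M\circ\hat{D}$, apply the covering flatness of $M$, and translate the resulting Ehresmann covering back into a $J_{T'}$-covering sieve through which the restricted cones factor. Your explicit attention to restricting the index category to a left-cancellative one is a point the paper assumes without comment, but it does not change the argument.
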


\begin{proof}
Let $M\colon (\calG,T)\to(\calG',T')$ be covering flat.
Now let $D\colon\calJ\to \mathbf{L}(\calG)$ be a diagram, where $\calJ$ is a finite left-cancellative category.
By the adjunction, $\mathbf{G}\dashv\mathbf{L}$,  this induces a diagram
$\hat{D}\colon \mathbf{G}(\calJ)\to\calG$.
Now let $\xymatrix@1{U\ar[r]^-{\xi_j}&\mathbf{L}(M)(D_j)}$ be a cone over $\mathbf{L}(M)\circ D$ in $\mathbf{L}(\calG')$. Note that each $\xi_j$ has the form
$$\xymatrix{U\ar[r]^-{\hat{\xi}_j}&E_j\le \mathbf{L}(M)(D_j)=M(D_j).}$$
So the $\xi_j$ give rise to an hv-cone over $M\circ\hat{D}$ in $\calG'$ with components:
$$
\xymatrix{U\ar[r]^{\hat{\xi}_j}&E_j\ar[d]|-{\scriptscriptstyle\bullet}\\&M(D_j)}$$

Since $M$ is covering flat there is an Ehresmann covering
$\{\xymatrix@1{U_k'\ar[r]|-{\scriptscriptstyle\bullet}&U}|\,k\in K\}\in T'(U)$ such that for each $k\in K$ there is an hv-cone
$$\xymatrix{T_k\ar[r]^{\tau_k}&D_{j,k}'\ar[d]|-{\scriptscriptstyle\bullet}\\&D_j}$$
in $\calG$ with arrows $\xymatrix@1{U_k'\ar[r]^{\theta_k}&T_k'\ar[r]|-{\scriptscriptstyle\bullet}&MT_k}$ for each $k\in K$, such that
$$
\xymatrix{
U_k'\ar[r]^{\hat{\xi}_j|_{U_k'}}&E_{i,k}'\ar[d]|-{\scriptscriptstyle\bullet}\ar@{}[dr]|{\equiv} & U_k'\ar[r]^{\theta_k} & T_k'\ar[r]^{M\tau_k|_{T_k'}} & D_{j,k}''\ar[d]|-{\scriptscriptstyle\bullet}
\\
&E_j\ar[d]|-{\scriptscriptstyle\bullet}&&& MD_{j,k}'\ar[d]|-{\scriptscriptstyle\bullet}
\\
&M(D_j)&&&M(D_j)
}
$$
It follows that the family $\{U_k'\le U|\,k\in K\}$ is a Grothendieck cover of $U$ in $\mathbf{L}(\calG')$ and the $T_k$ for $k\in K$ give cones in $\mathbf{L}(\calG)$ such that the cones $((\hat{\xi})_j|_{U_k'},\le)$ factor through their images. We conclude that $\mathbf{L}(M)$ is covering flat.
\end{proof}

\begin{prop}\label{GL-cf}
For a double functor $M\colon (\calG, T)\to(\calG',T')$ between Ehresmann sites with maximal elements, if the double functor  $\mathbf{G}\mathbf{L}(M)\colon (\mathbf{G}\mathbf{L}(\calG),T_{J_T})\to (\mathbf{G}\mathbf {L}(\calG'),T_{J_{T'}})$ is covering flat, then $M$
is covering flat.
\end{prop}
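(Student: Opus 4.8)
The plan is to transfer covering flatness across the essential equivalence $\kappa_\calG\colon\mathbf{GL}(\calG)\to\calG$. Recall from Corollary \ref{2-equivalence} that, because $\calG$ and $\calG'$ have maximal elements, $\kappa_\calG$ and $\kappa_{\calG'}$ admit pseudo-inverses $s_\calG\colon\calG\to\mathbf{GL}(\calG)$ and $s_{\calG'}$, given on objects by $B\mapsto(B,\hat B)$, with natural isomorphisms $\kappa_\calG s_\calG\cong\mathrm{Id}$ and $s_\calG\kappa_\calG\cong\mathrm{Id}$. By the naturality of $\kappa$ established in the proof of Theorem \ref{2-adjunction} we have $M\kappa_\calG=\kappa_{\calG'}\mathbf{GL}(M)$, and whiskering with the pseudo-inverses yields
$$
M\;\cong\;\kappa_{\calG'}\,\mathbf{GL}(M)\,s_\calG
\qquad\text{and}\qquad
s_{\calG'}M\;\cong\;\mathbf{GL}(M)\,s_\calG ,
$$
which are the identities I will exploit.

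The main preliminary step is to record that $\kappa_\calG$, $\kappa_{\calG'}$, $s_\calG$, and $s_{\calG'}$ are all morphisms of Ehresmann sites that both preserve and reflect coverings. This follows directly from the description of the topologies: a vertical sieve on $(B',B)$ in $\mathbf{GL}(\calG)$ consists of vertical arrows $(A',B)\to(B',B)$ with $A'\le B'$ sharing the upper bound $B$, and under the correspondence between $T_{J_T}$ on $\mathbf{GL}(\calG)$ and $T$ on $\calG$ recalled before Proposition \ref{prop:sheaves-G-and-LG-and-C-and-GC}, such a sieve lies in $T_{J_T}(B',B)$ precisely when its $\kappa_\calG$-image $\{A'\to B'\}$ lies in $T(B')$. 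Hence $\kappa_\calG$ carries coverings to coverings and back, and likewise for $s_\calG$.

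Now I transport the data. Let $D\colon\mathbb{I}\to\calG$ be a finite diagram and let $(\xi_i\colon U\to E_i\to MD_i)$ be an hv-cone over $MD$ in $\calG'$ with vertex $U$. Applying the double functor $s_{\calG'}$ produces an hv-cone over $s_{\calG'}MD$ with vertex $s_{\calG'}U$; via the isomorphism $s_{\calG'}M\cong\mathbf{GL}(M)s_\calG$ this is an hv-cone over $\mathbf{GL}(M)(s_\calG D)$, where $s_\calG D\colon\mathbb{I}\to\mathbf{GL}(\calG)$ is again a finite diagram. Since $\mathbf{GL}(M)$ is covering flat by hypothesis, there is a covering sieve $\{W_k\to s_{\calG'}U\}_{k\in K}\in T_{J_{T'}}(s_{\calG'}U)$ together with, for each $k$, a factorizing hv-cone over $s_\calG D$ in $\mathbf{GL}(\calG)$ witnessing covering flatness. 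Applying $\kappa_{\calG'}$ to the covering sieve gives, by covering preservation, a covering sieve on $\kappa_{\calG'}s_{\calG'}U\cong U$ in $\calG'$; applying $\kappa_\calG$ to the factorizing hv-cones over $s_\calG D$ gives hv-cones over $\kappa_\calG s_\calG D\cong D$ in $\calG$; and because the factorization condition is an equality of pasted double cells, of which there is at most one for a given frame, it is preserved by the double functors $\kappa_\calG,\kappa_{\calG'}$ and by the connecting natural isomorphisms. This is exactly the data exhibiting $M$ as covering flat.

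The main obstacle is the bookkeeping in the preliminary step and in checking that the transported cones really satisfy the hv-cone and factorization ("$\equiv$") conditions: one must verify that the natural isomorphisms $\kappa_\calG s_\calG\cong\mathrm{Id}$ and $s_{\calG'}M\cong\mathbf{GL}(M)s_\calG$ intertwine the cone legs and the restriction squares correctly. The at-most-one-double-cell property of ordered groupoids makes each individual compatibility automatic once the objects and horizontal/vertical arrows are seen to match, so the difficulty is organizational rather than computational; it is precisely here that the existence of the pseudo-inverses, hence the hypothesis that $\calG$ and $\calG'$ have maximal elements, is used.
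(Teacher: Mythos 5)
Your proposal is correct and follows essentially the same route as the paper: the paper's lifted diagram $\bar{D}(i)=(D_i,\hat{D}_i)$ is exactly your $s_\calG D$, and the paper likewise lifts the hv-cone into $\mathbf{GL}(\calG')$, applies covering flatness of $\mathbf{GL}(M)$, and reads off the covering sieve and factorizing cones in $\calG$ by taking first components (i.e.\ applying $\kappa$). The only cosmetic difference is that the paper takes the lifted vertex to be $(U,U)$ and builds the cone over $\mathbf{GL}(M)\bar{D}$ directly, thereby sidestepping your natural isomorphism $s_{\calG'}M\cong\mathbf{GL}(M)s_\calG$, which your formulation does need since $M$ need not send maximal objects to maximal objects.
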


\begin{proof}
Let $M\colon \calG\to\calG'$ be a double functor such that $\mathbf{G}\mathbf{L}(M)$ is covering flat.
	Let $D\colon\mathbb{I}\to \calG$ be a finite diagram, and
\begin{equation}\label{hv-cone}
\xymatrix{U\ar[r]^{\xi_i} & E_i\ar[d]|-{\scriptscriptstyle\bullet}\\&MD_i}
\end{equation} an hv-cone in $\calG'$.
For each object $D_i$, let $\hat{D}_i$ be the maximal element in the vertical category with $\xymatrix@1{D_i\ar[r]|-{\scriptscriptstyle\bullet}&\hat{D}_i}$.
Then define the diagram $\bar{D}\colon\mathbb{I}\to\mathbf{G}\mathbf{L}(\calG)$ by
$\bar{D}(i)=(D_i,\hat{D}_i)$, $\bar{D}(\xymatrix@1{i\ar[r]^-\alpha&i'})=(\xymatrix@1{(D_i,\hat{D}_i)\ar[r]^{D_\alpha}&(D_{i'},\hat{D}_{i'})})$, and $\bar{D}(\xymatrix@1{i\ar[r]|-{\scriptscriptstyle\bullet}&i'})=(\xymatrix@1{(D_i,\hat{D}_i)\ar[r]|-{\scriptscriptstyle\bullet}&(D_{i'},\hat{D}_{i'})=(D_{i'},\hat{D}_{i})})$. It follows that $\mathbf{G}\mathbf{L}(M)(\bar{D}_i)=(MD_i,M\hat{D}_i)$.
The hv-cone (\ref{hv-cone}) gives rise to an hv-cone
$$
\xymatrix{
(U,U)\ar[r]^{\xi_i}&(E_i,M\hat{D}_i)\ar[d]|-{\scriptscriptstyle\bullet}\\&(MD_i,M\hat{D}_i)\rlap{\,\,$=M\bar{D}_i$}
}
$$
in $\mathbf{G}\mathbf{L}(\calG')$.
Since $\mathbf{G}\mathbf{L}(M)$ is covering flat there is an Ehresmann covering
$\{\xymatrix@1{(U'_\ell,U)\ar[r]|-{\scriptscriptstyle\bullet}&(U,U)}|\,\ell\in L\}\in T_{J_{T'}}(U,U)$ with for each $\ell\in L$ an hv-cone
$$
\xymatrix{
(T_\ell,\hat{T}_\ell)\ar[r]^{\tau_{\ell,i}}&(A_{\ell,i},\hat{D}_{i})\ar[d]|-{\scriptscriptstyle\bullet}\\&(D_i,\hat{D}_i)
}
$$
in $\mathbf{G}\mathbf{L}(\calG)$ with a horizontal arrow $\xymatrix@1{(U_\ell',U)\ar[r]^{\theta_\ell}&(T_\ell',M\hat{T}_\ell)}$ such that
$$
\xymatrix@C=4em{
(U_\ell',U)\ar[r]^{\theta_\ell}&(T_\ell',M\hat{T}_\ell)\ar[r]^{(M\tau_{\ell,i})|_{T_\ell'}}&(A_{\ell,i}',M\hat{D}_{i})\ar[d]|-{\scriptscriptstyle\bullet}\ar@{}[dr]|\equiv &(U_\ell',U)\ar[r]^{\xi_i|_{U_\ell'}}&(E_{\ell,i}',M\hat{D}_i)\ar[d]|-{\scriptscriptstyle\bullet}
\\
&&(MA_{\ell,i},M\hat{D}_{i})\ar[d]|-{\scriptscriptstyle\bullet}&&(E_i,M\hat{D}_i)\ar[d]|-{\scriptscriptstyle\bullet}
\\
&&(MD_i,M\hat{D}_i)&&(MD_i,M\hat{D}_i)}
$$

From this data we obtain an Ehresmann covering $\{\xymatrix@1{U_\ell'\ar[r]|-{\scriptscriptstyle\bullet}&U}|\,\ell \in L\}\in T(U)$ with for each $\ell\in L$ an hv-cone,
$$\xymatrix{T_\ell\ar[r]^{\tau_{\ell,i}}&A_{\ell,i}\ar[d]|-{\scriptscriptstyle\bullet}\\&D_i}$$ and a horizontal arrow $\xymatrix@1{U_\ell'\ar[r]^{\theta_\ell}&T_\ell'}$ such that
$$
\xymatrix@C=4em{
U'_\ell\ar[r]^{\theta_\ell}&T_\ell'\ar[r]^{(M\tau_{\ell,i})|_{T_\ell'}}&A_{\ell,i}'\ar[d]|-{\scriptscriptstyle\bullet}\ar@{}[dr]|\equiv&U_\ell'\ar[r]^{\xi_i|_{U_\ell'}}&E_i'\ar[d]|-{\scriptscriptstyle\bullet}\\
&&MA_{\ell,i}\ar[d]|-{\scriptscriptstyle\bullet}&&E_i\ar[d]|-{\scriptscriptstyle\bullet}\\
&&MD_i&&MD_i
}
$$
as required. We conclude that $M$ is covering flat.

\end{proof}

\begin{prop}\label{LG-cf}
A functor $F\colon(\calC,J)\to(\calC',J')$ is covering flat and covering preserving if and only if its image $$\mathbf{L}\mathbf{G}(F)\colon (\mathbf{L}\mathbf{G}(\calC),J_{T_{J}})\to  (\mathbf{L}\mathbf{G}(\calC'),J_{T_{J'}})$$ is.
\end{prop}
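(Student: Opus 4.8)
The plan is to treat covering preservation and covering flatness separately, reducing each to Propositions~\ref{cp}--\ref{GL-cf} and closing the single remaining gap with the unit $\eta$. Covering preservation is the routine half: Proposition~\ref{cp} gives that $F$ is covering preserving if and only if $\mathbf{G}(F)$ is, and the analogue for $\mathbf{L}$ is immediate from the bijection $T\leftrightarrow J_T$. Indeed, since $\mathbf{L}(M)$ agrees with $M$ on objects and a generating family of vertical arrows lies in $T(A)$ precisely when the corresponding family of composites lies in $J_T(A)$, a double functor $M$ is covering preserving if and only if $\mathbf{L}(M)$ is. Taking $M=\mathbf{G}(F)$ and composing the two equivalences yields $F$ covering preserving $\iff\mathbf{L}\mathbf{G}(F)$ covering preserving.

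For covering flatness the forward implication is a direct chain: if $F$ is covering flat then $\mathbf{G}(F)$ is by Proposition~\ref{G-cf}, and hence $\mathbf{L}(\mathbf{G}(F))=\mathbf{L}\mathbf{G}(F)$ is by Proposition~\ref{L-cf}. The reverse implication is where the real work lies. Applying Proposition~\ref{G-cf} to the functor $\mathbf{L}\mathbf{G}(F)$ shows $\mathbf{G}\mathbf{L}\mathbf{G}(F)$ is covering flat, and then Proposition~\ref{GL-cf} with $M=\mathbf{G}(F)$ (legitimate, since $\mathbf{G}(\calC)$ and $\mathbf{G}(\calC')$ have maximal elements) brings us down to $\mathbf{G}(F)$ covering flat. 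At this point we are stuck: none of the available propositions supplies a converse to Proposition~\ref{G-cf}, i.e. a passage from $\mathbf{G}(F)$ covering flat back to $F$ covering flat. This missing converse is the main obstacle.

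I would clear the obstacle with the unit $\eta$, which in fact settles the reverse direction in one stroke without passing through $\mathbf{G}(F)$ at all. The component $\eta_{\calC}\colon(\calC,J)\to(\mathbf{L}\mathbf{G}(\calC),J_{T_J})$ is an equivalence of categories whose covering sieves correspond to those of $(\calC,J)$ (this is exactly Lawson and Steinberg's covering-sieve correspondence recalled in the introduction). Such an equivalence of sites is covering preserving, and it is covering flat for a trivial reason: given any finite diagram and any cone over its image, essential surjectivity and full faithfulness let the cone be lifted through $\eta_{\calC}$, so the defining sieve is maximal and hence covering; a chosen pseudo-inverse enjoys the same two properties. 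Since the naturality square for $\eta$ commutes on the nose (proof of Theorem~\ref{2-adjunction}), $\mathbf{L}\mathbf{G}(F)\circ\eta_{\calC}=\eta_{\calC'}\circ F$, so $F\cong\eta_{\calC'}^{-1}\circ\mathbf{L}\mathbf{G}(F)\circ\eta_{\calC}$. Both covering preservation and covering flatness are stable under composition and under natural isomorphism---most cleanly because their conjunction is precisely the condition of inducing a geometric morphism, and geometric morphisms compose and are invariant under natural isomorphism---so this presentation of $F$ as a composite of covering-flat, covering-preserving functors shows that $F$ inherits both properties from $\mathbf{L}\mathbf{G}(F)$, completing the biconditional.
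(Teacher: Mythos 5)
Your argument is correct and is in essence the paper's own: the paper disposes of Proposition~\ref{LG-cf} with a one-line appeal to the fact that the components of $\eta$ induce isomorphisms (equivalences) of sheaf categories, which combined with Karazeris' characterization of covering-flat covering-preserving functors as exactly those inducing geometric morphisms gives the biconditional. Your version simply carries out this transfer at the level of site morphisms --- conjugating by $\eta_{\calC}$ and a pseudo-inverse and invoking strict naturality of $\eta$ --- rather than at the level of sheaf toposes, so it is a legitimate fleshing-out of the same idea; the preliminary detour through Propositions~\ref{G-cf} and~\ref{GL-cf}, which you correctly identify as stalling at the missing converse to Proposition~\ref{G-cf}, is not needed.
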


\begin{proof}
This follows from the fact that the components of $\eta$ induce isomorphisms between the sheaf categories as shown by Lawson and Steinberg.
There is also a straightforward direct proof in terms of diagrams and cones.
\end{proof}

We derive from these propositions that the covering-flat covering-preserving double functors between Ehresmann sites are precisely the morphisms that give rise to geometric morphisms between the corresponding categories of sheaves.

\begin{thm}\label{geom}
A double functor $M\colon (\calG,T)\to(\calG',T')$ gives rise to a geometric morphism $\sh(\calG',T')\to\sh(\calG,T)$ if and only if it is covering preserving and covering flat.
\end{thm}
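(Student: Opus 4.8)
The plan is to reduce the statement to the corresponding, already-known criterion for Grothendieck sites by transporting everything along the functor $\bL$. Concretely, I would establish the chain of equivalences: $M$ gives rise to a geometric morphism $\sh(\calG',T')\to\sh(\calG,T)$ if and only if $\bL(M)$ gives rise to a geometric morphism $\sh(\bL(\calG'),J_{T'})\to\sh(\bL(\calG),J_T)$; the latter holds if and only if $\bL(M)$ is covering preserving and covering flat as a morphism of Grothendieck sites (the classical criterion of \cite{johnstone2002, Karazeris2004} recalled at the beginning of this subsection); and finally $\bL(M)$ is covering preserving and covering flat if and only if $M$ is.

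First I would establish the transfer of the geometric morphism. By Proposition \ref{prop:sheaves-G-and-LG-and-C-and-GC}(1) there are equivalences $\sh(\calG,T)\simeq\sh(\bL(\calG),J_T)$ and $\sh(\calG',T')\simeq\sh(\bL(\calG'),J_{T'})$, and these are the restrictions to sheaves of the isomorphisms of presheaf categories in (\ref{LS-presheaves}). By Remark \ref{D:arrowcorrespondence}(1) the induced functor $M^{*}$ on presheaf categories agrees, under these isomorphisms, with $(\bL(M))^{*}$, so the square of (pre)sheaf categories commutes up to isomorphism. Since the existence of a geometric morphism induced by a functor of sites is a property of the induced (pre)sheaf functor together with its adjoint --- preserving finite limits, in the language of Remark \ref{D:arrowcorrespondence} --- and since that Remark asserts $M^{*}$ enjoys such a property exactly when $(\bL(M))^{*}$ does, the existence of the geometric morphism transfers across the equivalences in both directions. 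This disposes of the first two equivalences in the chain.

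It then remains to compare the two pairs of conditions. Covering preservation is immediate from the definitions: under the correspondence between $T$ and $J_T$ a family $\{A_i'\,\bullet\!\!\to A\}$ lies in $T(A)$ exactly when its associated family of formal composites lies in $J_T(A)$, and $\bL(M)$ carries the latter to the family associated with $\{MA_i'\,\bullet\!\!\to MA\}$; hence $M$ is covering preserving if and only if $\bL(M)$ is. For covering flatness, Proposition \ref{L-cf} already gives that $M$ covering flat implies $\bL(M)$ covering flat. The remaining implication --- that $\bL(M)$ covering flat forces $M$ covering flat --- is the crux of the argument and the main obstacle. I would obtain it by running the translation of Proposition \ref{L-cf} in reverse: given a finite diagram $D\colon\mathbb{I}\to\calG$ and an hv-cone over $M\circ D$ in $\calG'$, one forms the diagram $\bL(D)\colon\bL(\mathbb{I})\to\bL(\calG)$ over the finite left-cancellative category $\bL(\mathbb{I})$, observes that the given hv-cone is precisely a cone over $\bL(M)\circ\bL(D)=\bL(M\circ D)$, applies covering flatness of $\bL(M)$ to produce a Grothendieck cover together with factoring cones in $\bL(\calG)$, and finally reinterprets these as an Ehresmann cover together with factoring hv-cones in $\calG$ via the adjunction $\mathbf{G}\dashv\mathbf{L}$. (When the sites carry maximal objects one may instead chain Proposition \ref{G-cf} with Proposition \ref{GL-cf}, but since the present statement is for arbitrary Ehresmann sites the direct hv-cone translation is the safer route.)

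Combining the three equivalences yields the theorem. The delicate point throughout is the bookkeeping of the correspondence between hv-cones in the ordered-groupoid setting and ordinary cones in the left-cancellative setting across the adjunction $\mathbf{G}\dashv\mathbf{L}$; by contrast, the geometric-morphism transfer and the covering-preservation comparison are formal once Remark \ref{D:arrowcorrespondence} and Proposition \ref{prop:sheaves-G-and-LG-and-C-and-GC} are in hand.
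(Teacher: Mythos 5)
Your proposal is correct and shares the paper's overall strategy---transfer the question along $\bL$ to the classical covering-preserving/covering-flat criterion for Grothendieck sites, using Proposition \ref{prop:sheaves-G-and-LG-and-C-and-GC} and Remark \ref{D:arrowcorrespondence} for the geometric-morphism transfer and Proposition \ref{L-cf} for the forward covering-flatness implication---but you diverge at the crucial converse step. The paper deduces that $M$ is covering flat from $\bL(M)$ being covering flat by chaining Proposition \ref{G-cf} (so that $\bG\bL(M)$ is covering flat) with Proposition \ref{GL-cf} (so that $M$ is covering flat); you instead propose to invert the translation of Proposition \ref{L-cf} directly, converting the hv-cone over $M\circ D$ into a cone over $\bL(M)\circ\bL(D)$ and reading the resulting $J_{T'}$-covering sieve back as a vertical sieve in $T'$ (this works because horizontal arrows in an ordered groupoid are invertible, so a sieve in $\bL(\calG')$ that contains an arrow with a given vertical codomain part also contains its identity-horizontal representative). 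Both routes succeed, but yours buys something real: Proposition \ref{GL-cf} is stated only for Ehresmann sites with maximal elements, a hypothesis Theorem \ref{geom} does not impose, so the paper's converse strictly speaking needs either that restriction or an argument like yours; the price is that you must actually carry out the reverse hv-cone/cone bookkeeping, which the paper's propositions establish only in the forward direction. Your direct comparison of covering preservation for $M$ versus $\bL(M)$ is also slightly cleaner than the paper's appeal to Proposition \ref{cp}, which as stated concerns $\bG$ rather than $\bL$.
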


\begin{proof}
If $M$ is covering flat and covering preserving, then $\bL(M)$ is covering flat and covering preserving by Propositions \ref{cp} and \ref{L-cf} and this implies that
$\bL(M)^*$ is part of a geometric morphism and hence $M^*$ is as well by Remark \ref{D:arrowcorrespondence}.

Conversely, if $M^*$ is part of a geometric morphism, so is $\bL(M)^*$ by Remark \ref{D:arrowcorrespondence}.  So $\bL(M)^*$ is covering flat and covering preserving, and hence $M$ is covering preserving by Proposition \ref{cp} and $\bG\bL(M)$ is covering flat by Proposition \ref{G-cf}. But then $M$ is also covering flat by Proposition \ref{GL-cf}.
\end{proof}

We will now write {\bf lcGsite} for the 2-category of left-cancellative Grothendieck sites with covering-preserving covering-flat morphisms and {\bf Esite} for the 2-category of Ehresmann sites with covering-preserving covering-flat morphisms and ${\bf Esite}_{\mbox{\scriptsize max}}$ for the Ehresmann sites where each object is below a unique maximal object.
Then we conclude from the previous propositions that

\begin{thm}\label{adj-sites}
The functors $\mathbf{G}$ and $\mathbf{L}$ induce a 2-adjoint biequivalence
$$
\mbox{\bf lcGsite}\simeq\mbox{\bf Esite}_{\mbox{\scriptsize max}}.
$$
\end{thm}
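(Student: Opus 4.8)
The plan is to promote the 2-adjoint biequivalence of Corollary \ref{2-equivalence} to the site level, rather than to reprove it from scratch. Since the underlying biequivalence $\mbox{\textbf{\textit{oGpd}}}_{\mbox{\scriptsize max}}\simeq\mbox{\textbf{\textit{lcCat}}}$ is already in hand, together with its unit $\eta$ and counit $\kappa$ and their pseudo-inverses, the entire task is to check that this data restricts to the structured 2-categories $\mbox{\bf lcGsite}$ and $\mbox{\bf Esite}_{\mbox{\scriptsize max}}$. Concretely, I would verify three things: that $\mathbf{L}$ and $\mathbf{G}$ send objects to objects carrying the corresponding topology, that they send morphisms of sites to morphisms of sites (and reflect this property), and that $\eta$ and $\kappa$ are themselves morphisms of sites admitting pseudo-inverses that are again morphisms of sites.

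First, on objects I would invoke the topology correspondences recalled just before Proposition \ref{prop:sheaves-G-and-LG-and-C-and-GC}: the functor $\mathbf{G}$ sends a left-cancellative Grothendieck site $(\calC,J)$ to the Ehresmann site $(\mathbf{G}(\calC),T_J)$, and this lands in $\mbox{\bf Esite}_{\mbox{\scriptsize max}}$ precisely because every $\mathbf{G}(\calC)$ has a unique maximal object above each object; dually $\mathbf{L}$ sends $(\calG,T)$ to $(\mathbf{L}(\calG),J_T)$. On 2-cells there is nothing new to verify, since $\Lambda$-transformations and natural transformations do not interact with the topologies and their correspondence is already established in Corollary \ref{2-equivalence}.

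For morphisms I would assemble the propositions of this section. Preservation of the two site conditions is given by Propositions \ref{cp} and \ref{G-cf} for $\mathbf{G}$ and by Proposition \ref{L-cf} for the covering-flat part of $\mathbf{L}$, the covering-preserving part of $\mathbf{L}$ being immediate from the definition of $J_T$ from $T$. Reflection then follows by chaining these with Propositions \ref{GL-cf} and \ref{LG-cf}: for instance, if $\mathbf{L}(M)$ is covering flat then $\mathbf{G}\mathbf{L}(M)$ is covering flat by Proposition \ref{G-cf}, whence $M$ is covering flat by Proposition \ref{GL-cf}, and symmetrically for $\mathbf{G}$ via Proposition \ref{LG-cf}. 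Since every morphism between images $\mathbf{G}(\calC)\to\mathbf{G}(\calC')$ is, up to the units, of the form $\mathbf{G}(F)$, this preservation-and-reflection shows that the hom-equivalences of Corollary \ref{2-equivalence} match the full subcategories of site morphisms on each side; Theorem \ref{geom} records the conceptual meaning of this class as exactly those morphisms inducing geometric morphisms. To conclude the biequivalence I would observe that each $\eta_\calC$ and each $\kappa_\calG$ is an equivalence of the underlying categories, hence induces an equivalence of sheaf categories by Remark \ref{D:arrowcorrespondence} and Proposition \ref{prop:sheaves-G-and-LG-and-C-and-GC}; an equivalence is automatically covering preserving and covering flat, and its pseudo-inverse inherits these properties, so $\eta$ and $\kappa$ live in the site 2-categories with site pseudo-inverses, and their naturality squares and triangle identities commute on the nose as before.

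The hard part will be the reflection of covering-flatness, encapsulated in Proposition \ref{GL-cf}: knowing that $\mathbf{L}$ and $\mathbf{G}$ merely \emph{preserve} the site conditions is not enough to restrict a biequivalence, and the argument that $M$ is covering flat whenever $\mathbf{G}\mathbf{L}(M)$ is relies essentially on transporting hv-cones along the maximal elements $\hat{D}_i$. This is exactly the point at which the restriction from $\mbox{\bf Esite}$ to $\mbox{\bf Esite}_{\mbox{\scriptsize max}}$ is forced, and I expect it to be the delicate step; once it is in place the remaining verifications are the routine bookkeeping of transporting the already-established 2-adjoint biequivalence through the topology correspondences.
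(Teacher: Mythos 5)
Your proposal follows essentially the same route as the paper, which states Theorem \ref{adj-sites} as a direct consequence of Corollary \ref{2-equivalence} together with Propositions \ref{cp}, \ref{G-cf}, \ref{L-cf}, \ref{GL-cf}, and \ref{LG-cf} and offers no further written proof. Your writeup in fact supplies more detail than the paper does (e.g., explicitly checking that $\eta$ and $\kappa$ are site morphisms with site pseudo-inverses), and correctly identifies the reflection of covering-flatness in Proposition \ref{GL-cf} as the step that forces the restriction to $\mbox{\bf Esite}_{\mbox{\scriptsize max}}$.
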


\begin{rmk} Theorem \ref{geom} can now be seen as saying that the equivalence in Theorem \ref{adj-sites} is an equivalence of representations of \'etendues.
\end{rmk}

\subsection{The Comparison Lemma}
\label{sec:comparison-lemma}

To further investigate how morphisms of Ehresmann sites correspond to morphisms between \'etendues, we want to consider which morphisms would induce an equivalence between the corresponding \'etendues.

For Grothendieck sites, the Comparison Lemma \cite{kock-moerdijk-1991}
provides a list of sufficient conditions on a morphism
$F: (\mathcal{C}, J)\rightarrow (\mathcal{C}', J')$
to guarantee that the induced geometric morphism
$\sh(F): \sh(\mathcal{C}', J')\rightarrow \sh(\mathcal{C}, J)$
between the sheaf categories is an equivalence. The comparison lemma checks the following four conditions for maps between sites.

\begin{definition} \label{G-characteristics}
A morphism $F\colon (\mathcal{C}, J)\rightarrow (\mathcal{C}', J')$ of Grothendieck sites
is:
\begin{enumerate}[(GS.1)]
\item
\emph{locally full} if for each arrow $g\colon F(C)\rightarrow F(D)$ in
$\mathcal{C}',$ there exists a cover
$\left(\xi_i\colon C_i\rightarrow C\right)_{i\in I}$ in $\mathcal{C}$
with maps
$\left(f_i\colon C_i\rightarrow D\right)_{i\in I}$
such that $g\circ F(\xi_i) = F(f_i)$ for all $i\in I.$

\item
\emph{locally faithful} if for each pair of maps $f, f'\colon C\rightarrow D$ in $\mathcal{C}$
with $F(f) = F(f'),$ there exists a cover $(\xi_i)_{i\in I}$ of $C$ with
$f\circ\xi_i = f'\circ\xi_i$ for all $i\in I.$

\item
\emph{locally surjective on objects} if for each object $C'$ of
$\mathcal{C}',$ there exists a covering family of the form
$(F(C_i)\rightarrow C')_{i\in I}$ in $\calC'$.

\item
\emph{co-continuous} if for each cover $(\xi_i\colon C'_i\rightarrow F(C))_{i\in I}$ in $\mathcal{C}',$ the set of arrows $f\colon D\rightarrow C$
in $\mathcal{C},$ such that $F(f)$ factors through some $\xi_i,$ covers
$C$ in $\mathcal{C}.$
\end{enumerate}\end{definition}

Here is a slightly reformulated version of the Comparison Lemma (to take into account that we do not assume that our sites are closed under finite limits) as stated in \cite{kock-moerdijk-1991}.

\begin{theorem}[Comparison Lemma for Grothendieck Sites]
\label{thm:cl-grothendieck}
Let $F\colon (\mathcal{C}, J)\rightarrow (\mathcal{C}', J')$ be a morphism of Grothendieck sites.
If $F$ satisfies conditions (GS.1)-(GS.3), then the functor
$$F^*\colon \sh(\mathcal{C}', J')\rightarrow \sh(\mathcal{C}, J)$$
defined by composition with $F$
is full and faithful.
If $F$ further satisfies (GS.4), then $F^*$ is an equivalence. \qed
\end{theorem}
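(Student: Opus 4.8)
The plan is to analyze the precomposition functor through the adjunction it induces on presheaf categories and then track what survives on the sheaf subcategories. Write $F^*\colon\mathbf{PreSh}(\calC')\to\mathbf{PreSh}(\calC)$ for the functor $P\mapsto P\circ F^{\mathrm{op}}$. It has a right adjoint $F_*$, the right Kan extension along $F$, with $F_*Q(X)=\mathrm{Hom}_{\mathbf{PreSh}(\calC)}(\mathrm{Hom}_{\calC'}(F-,X),Q)$. Since $F$ is a morphism of Grothendieck sites, $F^*$ restricts to $\sh(\calC',J')\to\sh(\calC,J)$, and the whole statement reduces to studying this restriction: I would prove faithfulness and fullness from (GS.1)--(GS.3), and then essential surjectivity from (GS.4).

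For faithfulness, suppose $\alpha,\beta\colon A\Rightarrow B$ are maps of sheaves on $\calC'$ with $F^*\alpha=F^*\beta$, i.e.\ $\alpha_{FC}=\beta_{FC}$ for every object $C$ of $\calC$. Given any object $X$ of $\calC'$, condition (GS.3) supplies a covering family $(FC_i\to X)_{i\in I}$; as $\alpha$ and $\beta$ agree after restriction along each $FC_i\to X$ and $B$ is separated, $\alpha_X=\beta_X$. For fullness, given $\gamma\colon F^*A\Rightarrow F^*B$ I would define $\bar\gamma_X(a)$, for $a\in A(X)$, by choosing a cover $(u_i\colon FC_i\to X)$ from (GS.3), setting $b_i=\gamma_{C_i}(A(u_i)(a))\in B(FC_i)$, and amalgamating the $b_i$ to a section of $B(X)$. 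The real content is to check that $(b_i)$ is a matching family: given a compatibility $u_iw=u_jw'$ with $w\colon W\to FC_i$ and $w'\colon W\to FC_j$ in $\calC'$, one covers $W$ by images $FW_k\to W$ via (GS.3), lifts the composites $FW_k\to FC_i$ and $FW_k\to FC_j$ to arrows of $\calC$ via (GS.1), and invokes the naturality of $\gamma$ over those lifts, with (GS.2) ensuring that distinct lifts agree after a further cover. Separatedness and the sheaf condition on $B$ then yield a unique amalgamation independent of all choices, and one verifies $F^*\bar\gamma=\gamma$.

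For essential surjectivity under (GS.4), let $G$ be a sheaf on $\calC$ and consider $F_*G$. Co-continuity is exactly what is needed to show $F_*G$ satisfies the sheaf axiom on $\calC'$: a covering family $(\xi_i\colon C'_i\to FC)$ is tested against the arrows of $\calC$ whose $F$-image factors through some $\xi_i$, and (GS.4) says these cover $C$, which lets one match and amalgamate sections of $G$. Thus $F_*G$ is a sheaf. It then remains to show the counit $\varepsilon_G\colon F^*F_*G\to G$ is an isomorphism; at each object $C$ this is the map $F_*G(FC)\to G(C)$, and (GS.1)--(GS.3) force it to be bijective by the same matching-family bookkeeping used for fullness. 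Exhibiting $G\cong F^*(F_*G)$ with $F_*G\in\sh(\calC',J')$ then shows $F^*$ is essentially surjective, and combined with full faithfulness this makes $F^*$ an (adjoint) equivalence with quasi-inverse $F_*$.

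The main obstacle is the fullness step: constructing the matching family and proving the amalgamation is well defined, natural, and choice-independent is where all three local conditions must be orchestrated at once, and where the absence of honest fibre products in the sites forces one to replace pullbacks by covers-of-covers. Verifying sheaf-hood of $F_*G$ under co-continuity is the second delicate point, though it becomes largely mechanical once the matching condition of the fullness argument has been set up correctly.
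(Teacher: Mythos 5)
Your proposal is correct in outline, but note that the paper does not actually prove this statement: Theorem \ref{thm:cl-grothendieck} is stated with a terminal \qed as a (slightly reformulated) quotation of the Comparison Lemma from Kock and Moerdijk \cite{kock-moerdijk-1991}, and the paper's own contribution is only to transport it to Ehresmann sites via Theorem \ref{thm:cl-ehresmann}. What you have written is essentially a reconstruction of the standard proof from the literature: full faithfulness of the precomposition functor $F^*$ from (GS.1)--(GS.3) via local lifting of arrows, local agreement of lifts, and amalgamation over covers by $F$-images; then essential surjectivity from (GS.4) by showing the right Kan extension $F_*$ preserves sheaves and that the counit $F^*F_*G\to G$ is invertible. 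The architecture is sound and you have correctly identified where the weight falls --- the well-definedness and choice-independence of the amalgamated $\bar\gamma_X(a)$ in the fullness step, which in the absence of fibre products must be handled by refining covers rather than pulling back, and the sheaf condition for $F_*G$ under co-continuity. Two points deserve more care than your sketch gives them: the verification that $F^*\bar\gamma=\gamma$ at an object $FC$ again requires (GS.1), since the covering maps $u_i\colon FC_i\to FC$ supplied by (GS.3) need not themselves be in the image of $F$, so naturality of $\gamma$ cannot be invoked directly; and the counit isomorphism is not automatic from full faithfulness of $F^*$ (that only gives invertibility of the unit), so the direct matching-family computation of $F_*G(FC)$ you allude to genuinely has to be carried out. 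Neither is an obstruction, just the places where the sketch would need to be expanded to a complete argument.
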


\begin{rmk}
The reader may wonder whether this lemma fully characterizes morphisms that
induce equivalences between the induced sheaf-topoi. The closest result in this direction is that for essentially small sites for Grothendieck topoi there is the characterization of the category of topoi being a category of left fractions for the category of sites with site morphisms with respect to the morphisms that satisfy the comparison lemma. Unfortunately, this result cannot be restricted to left-cancellative sites and \'etendues: although it is possible to represent each geometric morphism by a cospan of morphisms of sites it is not always possible to take the middle site to be left cancellative even if the other two are.
\end{rmk}

Our goal in this section is to give corresponding properties for morphisms
between Ehresmann sites and leverage Remark \ref{D:arrowcorrespondence} to
obtain a comparison lemma for Ehresmann sites.

\begin{definition}\label{E-characteristics}
A double functor $M\colon (\mathcal{G}, T)\rightarrow (\mathcal{G}', T')$
of Ehresmann sites is:
\begin{enumerate}[(ES.1)]
\item
\emph{locally full} if, for any diagram
$\xymatrix@1{M(A)\ar[r]^-g& B'\ar[r]|-{\scriptscriptstyle\bullet}& M(B)}$
in $\mathcal{G}'$,
there exists a covering vertical sieve
$\{\xymatrix@1@C-0.5pc{A_i\ar[r]|-{\scriptscriptstyle\bullet}& A}\}_{i\in I}\in T(A)$
and a family of horizontal arrows
$\{f_i: A_i\rightarrow B_i\}_{i\in I}$ in $\mathcal{G}$
such that
$g |_{M(A_i)} = M(f_i)$ for all $i\in I.$

\item
\emph{locally faithful} if, for any two horizontal arrows
$f\colon A\rightarrow B_f$
and
$g\colon A\rightarrow B_g$
with $\xymatrix@1{B_f\ar[r]|-{\scriptscriptstyle\bullet}&B}$ and $\xymatrix@1{B_g\ar[r]|-{\scriptscriptstyle\bullet}&B}$
in $\mathcal{G}$
and $M(f) = M(g),$
there exists a covering vertical sieve
$\{\xymatrix@1@C-0.5pc{A_i\ar[r]|-{\scriptscriptstyle\bullet}& A}\}_{i\in I}\in T(A)$
with
$f|_{A_i} = g|_{ A_i}$ for all $i\in I.$

\item
\emph{locally surjective on objects} if, for each
object $A'$ of $\mathcal{G}',$
there is a set
$\{\xymatrix@1@C-0.5pc{M(A_i) \ar[r]^-{f_i}  &A'_i \ar[r]|-\bullet & A'}\}_{i\in I}$
of horizontal arrows in $\mathcal{G}'$
such that
$\{\xymatrix@1@C-0.5pc{A'_i\ar[r]|-{\scriptscriptstyle\bullet}& A'}\}_{i\in I}\in T'(A')$
is a covering vertical sieve of $A'$ in $\mathcal{G}'.$

\item
\emph{co-continuous} if, for all covering vertical sieves
$\{\xymatrix@1@C-0.5pc{A'_i\ar[r]|-{\scriptscriptstyle\bullet}& M(A)}\}_{i\in I}\in T'(M(A))$
of $M(A)$ in $\mathcal{G}',$
the set
$\{ \xymatrix@1@C-0.5pc{A_j\ar[r]|-{\scriptscriptstyle\bullet}& A} \colon
    \xymatrix@1@C-0.5pc{M(A_j)\ar[r]|-{\scriptscriptstyle\bullet}& A'_i}
    \mbox{ for some } i\in I \}_{j\in J}$
is a covering vertical sieve of $A$ in $\mathcal{G}.$
\end{enumerate}\end{definition}

Given the criteria expressed in the language of Ehresmann sites, the desired Comparison Lemma for Ehresmann sites is given here.

\begin{theorem}[Comparison Lemma for Ehresmann Sites]
\label{thm:cl-ehresmann}
Let $M: (\mathcal{G}, T)\rightarrow (\mathcal{G}', T')$ be a morphism of
Ehresmann sites.
If $M$ satisfies (ES.1)--(ES.3), then the functor
$M^*: \sh(\mathcal{G}', T')\rightarrow \sh(\mathcal{G}, T)$
is full and faithful.
If further $M$ satisfies (ES.4), then $M^*$ is an equivalence. \qed
\end{theorem}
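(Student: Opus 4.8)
The plan is to deduce the statement from the Grothendieck Comparison Lemma (Theorem \ref{thm:cl-grothendieck}) by transporting everything across the functor $\bL$. Given a morphism $M\colon(\calG,T)\to(\calG',T')$ of Ehresmann sites, I would first pass to $\bL(M)\colon(\bL(\calG),J_T)\to(\bL(\calG'),J_{T'})$, a morphism of Grothendieck sites. By Remark \ref{D:arrowcorrespondence}(1) the precomposition functor $M^*$ is conjugate to $\bL(M)^*$ via the isomorphisms $\mathbf{PreSh}(\calG)\cong\mathbf{PreSh}(\bL(\calG))$, and by Proposition \ref{prop:sheaves-G-and-LG-and-C-and-GC}(1) these isomorphisms restrict to equivalences $\sh(\calG,T)\simeq\sh(\bL(\calG),J_T)$. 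Hence $M^*\colon\sh(\calG',T')\to\sh(\calG,T)$ is full and faithful (respectively an equivalence) precisely when $\bL(M)^*$ is. It therefore suffices to prove that $M$ satisfies (ES.1)--(ES.3) if and only if $\bL(M)$ satisfies (GS.1)--(GS.3), and likewise for (ES.4) and (GS.4); the theorem then follows by applying Theorem \ref{thm:cl-grothendieck} to $\bL(M)$.

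The heart of the argument is this translation of the local conditions, which rests on two observations about $\bL$. First, every arrow $A\to B$ of $\bL(\calG)$ is a formal composite $A\xrightarrow{h}B'\to B$ of a horizontal arrow with a vertical arrow, so the $\bL(M)$-image of such an arrow is exactly the datum $M(A)\xrightarrow{Mh}M(B')\to M(B)$ appearing in (ES.1) and (ES.2); moreover, composing an arrow $f$ of $\bL(\calG)$ with the $\bL(\calG)$-arrow induced by a vertical arrow $A_i\to A$ yields, by the composition rule defining $\bL$, precisely the restriction $f|_{A_i}$. Second, by definition of $J_T$ a sieve is $J_T$-covering on $A$ if and only if its vertical part is $T$-covering, and since every horizontal arrow of $\calG$ is invertible, a $J_T$-covering sieve is determined by the covering vertical sieve it contains. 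With this dictionary each Grothendieck condition on $\bL(M)$ unwinds into the correspondingly numbered Ehresmann condition on $M$: (GS.1) becomes (ES.1) using the factorization of arrows and the identification of restriction with composition; (GS.2) becomes (ES.2) because $f\circ\xi_i=g\circ\xi_i$ reads as $f|_{A_i}=g|_{A_i}$; (GS.3) becomes (ES.3) essentially verbatim, as the objects of $\bL(\calG')$ are those of $\calG'$ and the covering families have the stated shape; and (GS.4) becomes (ES.4) once the covering sieves are replaced by their generating vertical sieves.

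The step I expect to demand the most care is the co-continuity equivalence (GS.4) $\Leftrightarrow$ (ES.4), and, underlying it, the reduction from arbitrary covering sieves in $\bL(\calG')$ --- whose members may have nontrivial horizontal components --- to the purely vertical covering sieves in terms of which the Ehresmann conditions are phrased. Here I would use the invertibility of horizontal arrows in the groupoid to show that whether $\bL(M)(f)$ factors through a member of the cover can be tested on vertical parts alone, so that the set of arrows $f\colon D\to A$ of $\bL(\calG)$ with $\bL(M)(f)$ factoring through the cover generates the same sieve as the vertical set $\{\,A_j\to A : M(A_j)\to A_i'\text{ for some }i\,\}$ of (ES.4). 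Once this matching of covers is in place, the remaining checks are routine bookkeeping with the definitions of $\bL$, $J_T$, and the restriction operation, and the statement follows.
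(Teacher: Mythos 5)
Your proposal is correct and follows essentially the same route as the paper: transport the problem across $\bL$ using Remark \ref{D:arrowcorrespondence} and Proposition \ref{prop:sheaves-G-and-LG-and-C-and-GC}, match (ES.$i$) with (GS.$i$) for $\bL(M)$ by unwinding the formal horizontal-then-vertical factorization of arrows in $\bL(\calG)$ and the fact that $J_{T'}$-covers are generated by $T'$-covering vertical sieves, and then invoke Theorem \ref{thm:cl-grothendieck}. Your write-up is, if anything, more explicit than the paper's about the dictionary underlying the condition-by-condition translation.
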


\begin{proof}
We note that once one writes down what it means for $M$ to satisfy (ES.1)--(ES.3) and for $\mathbf{L}(M)$ to satisfy (GS.1)--(GS.3),
one obtains exactly the same diagrams, slightly differently interpreted, and we need to prove exactly the same results on both sides. So $M$ satisfies each of the conditions (ES.1)--(ES.3) precisely when $\mathbf{L}(M)$ satisfies the corresponding condition in (GS.1)--(GS.3).

For (ES.4) and (GS.4) the correspondence is straightforward once one realizes that any covering in $J_{T'}(A)$ in $\mathbf{L}(\calG')$  is generated by a covering in $T'(A)$ in $\calG'$
and a family of arrows factors through the generating set precisely when it factors through the whole covering.
So $M$ satisfies (ES.4) precisely when $\mathbf{L}(M)$ satisfies (GS.4).

The result then follows from Theorem \ref{thm:cl-grothendieck}
and Remark \ref{D:arrowcorrespondence}.
\end{proof}


\begin{thebibliography}{99}

\bibitem{sga4}
Michael Artin, Alexander Grothendieck, and Jean-Louis Verdier.
\newblock {\em Theorie de Topos et Cohomologie Etale des Schemas {I}, {II},
  {III}}, volume 269, 270, 305 of {\em Lecture Notes in Mathematics}.
\newblock Springer, 1971.

\bibitem{bunge1979}
Marta Bunge and Robert Pare.
\newblock Stacks and equivalence of indexed categories.
\newblock {\em Cahiers de Topologie et G\'eom\'etrie Diff\'erentielle
  Cat\'egoriques}, 20(4):373--399, 1979.

\bibitem{cockett2019}
Robin {Cockett} and Chris {Heunen}.
\newblock {Compact inverse categories}.
\newblock arXiv:1906.04248, June 2019.

\bibitem{dewolf-2018-a}
Darien Dewolf and Dorette Pronk.
\newblock The {Ehresmann}-{Schein}-{Nambooripad} {Theorem} for {inverse}
  {categories}.
\newblock {\em Theory and Applications of Categories}, 33(27):813--831, August
  2018.

\bibitem{ehresmann:1960a}
Charles Ehresmann.
\newblock Cat\'egories inductives et pseudogroupes.
\newblock {\em Annales de l'Institut Fourier}, 10:307--332, 1960.

\bibitem{Ehresmann1963}
Charles Ehresmann.
\newblock Cat\'egories structur\'ees.
\newblock {\em Annales Scientifiques de l'\'Ecole Normale Sup\'erieure},
  80(4):349--426, 1963.

\bibitem{EKV}
T. Everaert, R.W. Kieboom and T. Van der Linden. \newblock{Model structures for homotopy of internal categories}. \newblock{
\em Theory and Applications of Categories}, 15(3): 66--94, 2005.

\bibitem{gould2009}
Victoria Gould and Christopher Hollings.
\newblock Restriction semigroups and inductive constellations.
\newblock {\em Communications in Algebra}, 38(1):261--287, 2009.

\bibitem{Hollings:2010iq}
Christopher Hollings.
\newblock Extending the Ehresmann-Schein-Nambooripad theorem.
\newblock {\em Semigroup Forum}, 80(3):453--476, Jun 2010.

\bibitem{JST} G.~Janelidze, M.~Sobral, W.~Tholen,  Beyond Barr exactness: effective descent morphisms. In M. Pedicchio and W. Tholen (Eds.), {\em Categorical Foundations: Special Topics in Order, Topology, Algebra, and Sheaf Theory},  {\em Encyclopedia of Mathematics and its Applications}, pp. 359--406, Cambridge University Press, 2003

\bibitem{johnstone2002}
Peter~T Johnstone.
\newblock {\em {Sketches of an Elephant: a Topos Theory Compendium}, Volume 2}.
\newblock Oxford logic guides. Oxford Univ. Press, New York, NY, 2002.

\bibitem{Karazeris2004}
Panagis Karazeris.
\newblock Notions of flatness relative to a Grothendieck topology.
\newblock {\em Theory and Applications of Categories}, 12(5):225--236, 2004.

\bibitem{kock-moerdijk-1991}
Anders Kock and Ieke Moerdijk.
\newblock Presentations of \'etendues.
\newblock {\em Cahiers de Topologie et G\'eom\'etrie Diff\'erentielle
  Cat\'egoriques}, 32(2):145--164, 1991.

\bibitem{lausch1975}
Hans Lausch.
\newblock Cohomology of inverse semigroups.
\newblock {\em Journal of Algebra}, 35(1):273 -- 303, 1975.

\bibitem{lawson1998}
Mark~V Lawson.
\newblock {\em Inverse Semigroups}.
\newblock World Scientific, 1998.

\bibitem{L1} Mark V.~Lawson, Ordered groupoids and left cancellative categories, {\em Semigroup Forum} {\bf 68} (2004), pp.~458--476

\bibitem{lawson2004}
Mark~V. Lawson and Benjamin Steinberg.
\newblock Ordered groupoids and \'etendues.
\newblock {\em Cahiers de Topologie et G\'eom\'etrie Diff\'erentielle
  Cat\'egoriques}, 45(2):82--108, 2004.

\bibitem{leech1987}
Jonathan Leech.
\newblock Constructing inverse monoids from small categories.
\newblock {\em Semigroup Forum}, 36(1):89--116, Dec 1987.

\bibitem{lognathan1981}
M~Loganathan.
\newblock Cohomology of inverse semigroups.
\newblock {\em Journal of Algebra}, 70(2):375 -- 393, 1981.

\bibitem{meakin1998}
J.~C.~Meakin and A.~Yamamura.
\newblock Bass-Serre theory and inverse monoids.
\newblock \emph{Semigroups and Applications},
\newblock World Scientific, 1998, 125 -- 140.

\bibitem{nambooripad2}
K.~S.~S. Nambooripad.
\newblock Structure of regular semigroups, {I} fundamental regular semigroups.
\newblock {\em Semigroup Forum}, 9(1):354--363, Dec 1974.

\bibitem{nambooripad3}
K.~S.~S. Nambooripad.
\newblock Structure of regular semigroups, {II} the general case.
\newblock {\em Semigroup Forum}, 9(1):364--371, Dec 1974.

\bibitem{rosenthal1981}
Kimmo~I. Rosenthal.
\newblock \'Etendues and categories with monic maps.
\newblock {\em Journal of Pure and Applied Algebra}, 22(2):193 -- 212, 1981.

\bibitem{schein1}
B.~M. Schein.
\newblock On the theory of inverse semigroups and generalised groups.
\newblock {\em American Mathematical Society Translations}, 2(113):89--122,
  1979.

\bibitem{wang2019}
Shoufeng Wang.
\newblock An {Ehresmann}-{Schein}-{Nambooripad}-type theorem for a class of
  p-restriction semigroups.
\newblock {\em Bulletin of the Malaysian Mathematical Sciences Society},
  42(2):535--568, Mar 2019.
\end{thebibliography}
\end{document}